\newcommand{\skel}[1]{^{(#1)}}
\newcommand{\dom}{\mathop{\boldsymbol d}}
\newcommand{\ran}{\mathop{\boldsymbol r}}
\newcommand{\supp}{\mathop{\mathrm{supp}}}
\newcommand{\Ind}{\mathop{\mathrm{Ind}}\nolimits}
\newcommand{\Lc}{\ensuremath {\mathcal{L}_c(X)} }
\newcommand{\OO}{\mathcal O}
\newcommand{\inv}{^{-1}}
\newcommand{\til}[1]{\ensuremath{\widetilde {#1}}}
\newcommand{\wh}{\widehat}
\newcommand{\module}[1]{#1\text{-}\mathbf{mod}}
\def\LLL{\mathcal L} 
\newcommand{\grp}{\mathscr G}
\newcommand{\Ann}{\mathop{\mathrm{Ann}}\nolimits}
\newcommand{\secmod}{\Gamma_c(\mathscr G,\mathcal M)}
\newcommand{\secring}{\Gamma_c(\mathscr G,\mathcal O)}
\newcommand{\secringunit}{\Gamma_c(\mathscr G^{(0)},\mathcal O)}
\newcommand{\scj}{\subseteq}
\newcommand{\cn}{\mathbb{C}}
\newtheorem{Thm}{Theorem}[section]
\newtheorem{Prop}[Thm]{Proposition}
\newtheorem{Lemma}[Thm]{Lemma}
{\theoremstyle{definition}
\newtheorem{Def}[Thm]{Definition}}
{\theoremstyle{remark}
\newtheorem{Rmk}[Thm]{Remark}}
\newtheorem{Cor}[Thm]{Corollary}
{\theoremstyle{remark}
}
{\theoremstyle{remark}
\newtheorem{Example}[Thm]{Example}}
\theoremstyle{remark}
\theoremstyle{remark}
\theoremstyle{remark}
\numberwithin{equation}{section}
\title[Ideals of \'etale groupoid algebras with coefficients in a sheaf]{Ideals of \'etale groupoid algebras with coefficients in a sheaf with applications to topological dynamics}
\author{Gilles G. de Castro, Daniel Gon\c{c}alves\and Benjamin Steinberg}
\address[G. G. de Castro]{%
	Departmento de Matem\'atica\\
	Universidade Federal de Santa Catarina\\
	Florian\'{o}polis, SC, 88040-900\\
	Brazil}
	\email{gilles.castro@ufsc.br}
\address[D.~Gon\c{c}alves]{%
	Departmento de Matem\'atica\\
	Universidade Federal de Santa Catarina\\
	Florian\'{o}polis, SC, 88040-900\\
	Brazil}
	\email{daemig@gmail.com}	
\address[B.~Steinberg]{%
    Department of Mathematics\\
    City College of New York\\
    Convent Avenue at 138th Street\\
    New York, New York 10031\\
    USA}
\email{bsteinberg@ccny.cuny.edu}
\thanks{The third author thanks the Fulbright Commission for its support for visiting the Federal University of Santa Catarina in Brazil as well as PSC CUNY. The first and second authors were partially supported by Capes-PrInt-Brazil. The second author was also supported by CNPq- Brazil and  Funda\c{c}\~ao de Amparo \`a Pesquisa e Inova\c{c}\~ao do Estado de Santa Catarina (FAPESC)}
\date{\today}
\keywords{groupoid algebras, induced modules, Effros-Hahn conjecture, primitivity, semiprimitivity, simplicity}
\subjclass[2020]{16D60, 16D25, 22A22, 20M18, 18B40, 16S36}
\begin{document}

\begin{abstract}
We prove the Effros-Hahn conjecture for groupoid algebras with coefficients in a sheaf, obtaining as a consequence a description of the ideals in skew inverse semigroup rings. We also use the description of the ideals to characterize when the groupoid algebras with coefficients in a sheaf are von Neumann regular, primitive, semiprimitive, or simple. We apply our results to the topological dynamics of actions of inverse semigroups, describing the existence of dense orbits and minimality in terms of primitivity and simplicity, respectively, of the associated algebra. Moreover, we apply our results to the usual complex groupoid algebra of continuous functions with compact support, used to build the C*-algebra associated with a groupoid, and describe criteria for its simplicity.  
\end{abstract}

\maketitle

\section{Introduction}

Groupoid algebras with coefficients in a sheaf unify the study of usual convolution groupoid algebras (often called Steinberg algebras) and skew inverse semigroup rings (see \cite{BenDan}). The aforementioned constructions are key in the study of algebras associated with combinatorial objects such as graphs, higher rank graphs, ultragraphs, etc, and have deep connections with topological dynamics and the intrinsic dynamics associated with combinatorial objects, see \cite{BC, cp} for example. Among the relevant properties of convolution groupoid algebras and inverse semigroup skew rings, the constitution of its ideals play a crucial role. For just a couple of examples, the ideal structure of a Leavitt path algebra may be recovered from the ideal structure of a convolution groupoid algebra, see \cite{HRove}, and minimality and topological freeness of actions of inverse semigroups can be described in terms of simplicity of the associated skew rings (see \cite{BGOR, DG, GOR}). Our goal in this paper is to obtain a description of the ideals in groupoid algebras with coefficients in a sheaf and apply this description to the topological dynamics of actions of inverse semigroups and to the usual complex groupoid algebra of continuous functions with compact support used to build the C*-algebra associated with a groupoid. 

When one searches the literature for a classification of ideals in crossed products, the references go back to the original Effros-Hahn conjecture, which suggested that every primitive ideal of a crossed product of an amenable locally compact group with a commutative C*-algebra should be induced from a primitive ideal of an isotropy group, see \cite{Effros}. Since then, the conjecture has been proved and generalized in several contexts, see \cite{Ben} for a comprehensive account of developments. 

In the purely algebraic setting, an Effros-Hahn type conjecture is proved for partial skew group rings in \cite{Doke} and for groupoid convolution algebras in \cite{Paulinho} and \cite{Ben}. In this paper, we prove an algebraic Effros-Hahn type conjecture in the context of groupoid algebras with coefficients in a sheaf. Therefore, we extend the known results of \cite{Paulinho, Doke, Ben}  to include skew inverse semigroup rings, and at the same time provide a unified statement to the Effros-Hahn conjecture proved in \cite{Paulinho, Doke, Ben}. 

After we show our version of the Effros-Hanh conjecture, we use the machinery developed to prove it, namely the induction of modules for groupoid algebras with coefficients in a sheaf,  to describe several algebraic properties of groupoid algebras with coefficients in a sheaf. This includes simplicity,  semiprimitivity, and primitivity. For topological actions of inverse semigroups, since the associated skew inverse semigroup ring can be seen as a groupoid convolution algebra with coefficients in a sheaf, we relate topological properties of the action with algebraic properties of the associated groupoid convolution algebra. Furthermore, we realize the usual algebra $C_c(\mathscr G)$  (used to build groupoid $C^*$-algebras) as a groupoid algebra with coefficients in a sheaf (and hence as a skew inverse semigroup ring), and then apply our topological results to describe when it is simple in terms of the groupoid. Therefore, we provide a bridge between algebra and analysis (for example, our simplicity characterization of $C_c(\mathscr G)$ should be compared with the characterization of simplicity of the reduced groupoid C*-algebra).

We now give a more detailed description of our work.

We begin by presenting some necessary background on the topics of the paper. In particular, we recall the construction of convolution algebras with coefficients in a sheaf and of skew inverse semigroup rings. Furthermore, we recall the Disintegration theorem, which is used in \cite{BenDan} to prove that skew inverse semigroup rings and groupoid convolution algebras with coefficients in a sheaf are essentially the same objects. 

We describe the induction process that transforms representations of the skew group rings associated with isotropy groups to modules for groupoid algebras with coefficients in a sheaf in Section~\ref{induced}, and use this to prove Theorem~\ref{thm:ann.ideal}, which asserts that every ideal in a groupoid algebra with coefficients in a sheaf of modules is an intersection of annihilators of induced modules.

In section~\ref{s:applications}, we prove the Effros-Hahn conjecture for 
groupoid convolution algebras with coefficients in a sheaf, Theorem~\ref{t:left.max},
and use Theorem~\ref{thm:ann.ideal} to prove a number of properties of $\secring$, the groupoid convolution algebra associated with a sheaf. We start by showing that every primitive ideal of $\secring$ is the annihilator of a single induced representation, Theorem~\ref{thm:primitive.ideal}. We then show that induction of modules preserves simplicity, Theorem~\ref{thm:simple.module}. As with usual groupoid algebras, there is a diagonal commutative algebra inside $\secring$, call it $\secringunit$. We characterize when $\secringunit$ is von Neumann regular in Proposition~\ref{p:vnr}, and describe when it is maximal commutative inside $\secring$ in Proposition~\ref{p:neweffective}. We specify the role of $\secringunit$ in determining when a ring homomorphism from $\secring$ is injective in the Generalized Uniqueness Theorem, Theorem~\ref{gut}. We describe primitivity of $\secring$ in Theorem~\ref{t:main.primitive} and, under some assumptions, show that $\secring$ is left primitive if, and only if, the unit space of the groupoid has a dense orbit, Theorem~\ref{t:max.abel.case}. In Theorems~\ref{t:main.semiprimitive} and \ref{tsemiprimitivity} we give a sufficient condition for $\secringunit$ to be semiprimitive and in Theorem~\ref{simplelife} we characterize simplicity of $\secringunit$.

We devote Section~\ref{dynamics} to the topological dynamics of actions of inverse semigroups. In Proposition~\ref{oneway}, we show that if the associated algebra $\Gamma_c(S\ltimes X,\OO)$ is left primitive then the action has a dense orbit. From this, we obtain that for certain topologically free actions on locally compact, Hausdorff, zero-dimensional spaces the associated algebra $\Gamma_c(S\ltimes X,\OO)$ is left primitive if, and only if, the action has a dense orbit, Corollary~\ref{primodenso}. In Propositions~\ref{semiprop1} and \ref{semiprop2} we give sufficient conditions for semiprimitivity of $\Gamma_c(S\ltimes X,\OO)$, and, in Theorem~\ref{simpleaction} we relate simplicity of $\Gamma_c(S\ltimes X,\OO)$ with minimality of the action. 

Finally, in Section~\ref{complex}, we study the usual algebra $C_c(\grp)$ of complex valued, continuous functions with compact support, which is used to build the C*-algebras (full and reduced) associated to a groupoid. For Hausdorff groupoids, we give a direct description of $C_c(\grp)$ as a skew inverse semigroup ring, Proposition~\ref{ccgpd}. For general groupoids, we provide a realization of $C_c(\grp)$ as a groupoid convolution algebra with coefficient in a sheaf in Theorem~\ref{thm:groupoid.ring.sheaf}; this implies that the same skew inverse semigroup ring representation as in the Hausdorff case holds in general. Applying results of the previous section to $C_c(\grp)$, in the Hausdorff case, we characterize the simplicity of $C_c(\grp)$ in terms of minimality and effectiveness of $\grp$, see Theorem~\ref{thm:C_c(G) simple}. We finish the paper considering the groupoid arising from a partial action and prove that $C_c(\grp)$ is a partial crossed product, where $\grp$ is the transformation groupoid of the partial action.


\section{Background}

For completeness, in this section we recall the relevant concepts that will be used throughout the paper, as defined in \cite{BenDan}.

\subsection{Groupoids}

A \emph{groupoid} $\mathscr G$ is a small category of isomorphisms. A \emph{topological groupoid} is a groupoid equipped with a topology making the multiplication and inverse operations continuous. The elements of the form $gg^{-1}$ are called \emph{units}. We denote the set of units of  $\mathscr G$ by $\mathscr G^{(0)}$, and refer to $\mathscr G^{(0)}$ as the \emph{unit space}.  The \emph{source} and \emph{range} maps are given by $\dom(g)=g^{-1}g$ and $\ran(g)= gg^{-1}$, for $g\in G$. These maps are necessarily continuous when $G$ is a topological groupoid.

An \emph{\'etale groupoid} is a topological groupoid  $\mathscr G$ such that its unit space  $\mathscr G\skel 0$ is locally compact and Hausdorff and its range map $\ran$ is a local homeomorphism (this implies that the domain map $\dom$ and the multiplication map are also local homeomorphisms). A \emph{bisection} of $\mathscr G$ is a subset $B\subseteq \mathscr G$ such that the restriction of the range and source maps to $B$ are injective. An \'etale groupoid is \emph{ample} if its unit space has a basis of compact open sets or, equivalently, if the arrow space $\mathscr G\skel 1$ has a basis of compact open bisections (when it is clear from the context, we will also use $\grp$ to denote the arrow space). We remark for future use that, for every open bisection $B$, of an ample groupoid $\mathscr G$, the range and source maps are homeomorphisms from $B$ to $\dom(B)$ and $\ran(B)$, respectively. The \emph{isotropy group} of a unit $x\in \mathscr G^{(0)}$ is the group $\mathscr G_x^x = \{g\in \mathscr G \mid \dom(g)=\ran(g)=x\}$. The \emph{isotropy subgroupoid} of a groupoid $\mathscr G$ is
the subgroupoid \[\mathrm{Iso}(\mathscr G)=\bigcup_{x\in \mathscr G^{(0)}} \mathscr G_x^x =\{g\in \mathscr G: \dom(g)=\ran(g)\}.\] We say that the topological groupoid $\mathscr G$ is \emph{effective} if $ \mathscr G^{(0)}= \mathrm{int}(\mathrm{Iso}(\mathscr G))$.
 We will use the notation $\zeta:y\to z$ meaning that $\dom(\zeta)=y$ and $\ran(\zeta)=z$. Furthermore, for each $x \in \mathscr G\skel 0$, $\mathrm{Orb}(x) = \ran\circ \dom^{-1}(x)$.

From now on, following Bourbaki, the term ``compact'' will include the Hausdorff axiom.  However, a space can be locally compact without being Hausdorff.
If $f\colon X\to Z$ and $g\colon Y\to Z$ are maps of spaces, then their \emph{pullback} is \[X\times_{f,g} Y=\{(x,y)\mid f(x)=g(y)\}\] (with the subspace topology of the product space).

\subsection{Ample groupoid convolution algebras with coefficients in a sheaf of rings}\label{s:sheaf.coeff}
In this section, we recall the convolution algebra associated with a sheaf of rings over an ample groupoid\footnote{Technically, this is just a ring but convolution algebra fits better with terminology in operator algebras and every ring is a $\mathbb Z$-algebra}. 

Let $\mathscr G$ be an ample groupoid.  Then a \emph{$\mathscr G$-sheaf} $\mathcal E$ consists of a topological space $E$, a local homeomorphism $p\colon E\to \mathscr G\skel 0$ and a continuous map $\alpha\colon \mathscr G\skel 1\times_{\dom,p} E\to E$ (written $(\gamma,e)\mapsto \alpha_{\gamma}(e)$) satisfying the following axioms:
\begin{itemize}
\item [(S1)] $\alpha_{p(e)}(e) = e$;
\item [(S2)] $p(\alpha_{\gamma}(e))=\ran(\gamma)$ if $\dom(\gamma)=p(e)$;
\item [(S3)] $\alpha_{\beta}(\alpha_{\gamma}(e)) = \alpha_{\beta\gamma}(e)$ whenever $\dom(\beta)=\ran(\gamma)$ and $\dom(\gamma)=p(e)$.
\end{itemize}								
If $x\in \mathscr G\skel 0$, then $\mathcal E_x=p^{-1}(x)$ is called the \emph{stalk} of $\mathcal E$ at $x$.  Notice that $\alpha_{\gamma}\colon \mathcal E_{\dom(\gamma)}\to \mathcal E_{\ran(\gamma)}$ is a bijection with inverse $\alpha_{\gamma\inv}$.   


We shall be interested in sheaves with extra structure.  A \emph{$\mathscr G$-sheaf of (unital) rings} is a $\mathscr G$-sheaf $\mathcal O=(E,p,\alpha)$ equipped with a unital ring structure on each stalk $\mathcal O_x$ such that the following axioms hold:
\begin{itemize}
\item [(SR1)] $+\colon E\times_{p,p} E\to E$ is continuous;
\item [(SR2)] $\cdot\colon E\times_{p,p} E\to E$ is continuous;
\item [(SR3)] the unit section $x\mapsto 1_x$ is a continuous mapping $\mathscr G\skel 0\to E$;
\item [(SR4)] $\alpha_{\gamma}\colon \mathcal O_{\dom(\gamma)}\to \mathcal O_{\ran(\gamma)}$ is a ring homomorphism for all $\gamma\in \mathscr G\skel 1$.
\end{itemize}

Note that the zero section $x\mapsto 0_x$ is continuous and that the negation map is continuous (these are standard facts about sheaves of abelian groups, and hence rings, over spaces,~cf.~\cite{Dowker}).

Given a $\mathscr G$-sheaf of rings $\mathcal O=(E,p,\alpha,+,\cdot)$, next we recall the definition of the \emph{ring of global sections of $\mathcal O$ with compact support}, which we shall also call the \emph{convolution algebra of $\mathscr G$ with coefficients in the sheaf of rings $\mathcal O$}.
Let $A(\mathscr G,\mathcal O)$ be the set of all mappings $f\colon \mathscr G\skel 1\to E$ such that $p\circ f=\ran$, that is, $f(\gamma)\in \mathcal O_{\ran(\gamma)}$ for all $\gamma\in \mathscr G\skel 1$. Equip $A(\mathscr G,\mathcal O)$ with a binary operation by defining $(f+g)(\gamma)= f(\gamma)+g(\gamma)$, which we refer to as pointwise addition. With this operation, $A(\mathscr G,\mathcal O)$ is an abelian group with respect to pointwise addition with $0$ as the identity and $(-f)(\gamma) = -f(\gamma)$ for $\gamma\in \mathscr G\skel 1$.

We define, as an abelian group, $\Gamma_c(\mathscr G,\mathcal O)$ to be the subgroup generated by all mappings $f\in A(\mathscr G,\mathcal O)$ such that there is a compact open bisection $U$ with $f|_U$ continuous and $f|_{\mathscr G\skel 1\setminus U} = 0$. In this case, we say that $f$ is \emph{supported} on $U$. If $U$ is a compact open bisection and $s\colon \ran(U)\to E$ is any (continuous) section of $p$, then we can define an element $s\chi_U\in \Gamma_c(\mathscr G,\mathcal O)$, supported on $U$, by
\[(s\chi_U)(\gamma) = \begin{cases} s(\ran(\gamma)), & \text{if}\ \gamma\in U\\ 0_{\ran(\gamma)}, & \text{else.}\end{cases} \]  In the special case that $s$ is the unit section $x\mapsto 1_x$ over $U$, we denote $s\chi_U$ by simply $\chi_U$.  In other words,
\[\chi_U(\gamma) = \begin{cases} 1_{\ran(\gamma)}, & \text{if}\ \gamma\in U\\ 0_{\ran(\gamma)}, & \text{else.}\end{cases}\]

Notice that if $f\in \Gamma_c(\mathscr G,\mathcal O)$ is supported on a compact open bisection $U$, then $f=s\chi_U$ where $s=f\circ (\ran|_U)\inv$.  Thus $\Gamma_c(\mathscr G,\mathcal O)$ can also be described as the abelian group generated by all elements of the form $s\chi_U$ where $s\colon \ran(U)\to E$ is a section, and $U$ is a compact open bisection.

A crucial property of elements of $\Gamma_c(\mathscr G,\mathcal O)$, and that we will use in our work, is that they can only be non-zero on finitely many points of any fiber of $\dom$ or $\ran$.

\begin{Prop}\label{p:finiteness.prop}
Let $f\in \Gamma_c(\mathscr G,\mathcal O)$ and $x\in \mathscr G\skel 0$.  Then there are only finitely many $\gamma\in \dom\inv (x)$ such that $f(\gamma)\neq 0$ and, similarly, for $\ran\inv(x)$.
\end{Prop}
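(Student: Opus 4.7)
The proof should be a short, direct unpacking of the definition together with the defining property of a bisection. Here is the plan.

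By construction, $\Gamma_c(\mathscr G, \mathcal O)$ is the abelian subgroup of $A(\mathscr G,\mathcal O)$ generated by the elements $s\chi_U$, where $U$ ranges over compact open bisections and $s$ over continuous sections of $p$ on $\ran(U)$. So I would begin by fixing $f \in \Gamma_c(\mathscr G,\mathcal O)$ and writing it as a finite sum
\[
f = \sum_{i=1}^n s_i \chi_{U_i},
\]
with each $U_i$ a compact open bisection. The support of each summand $s_i\chi_{U_i}$ is contained in $U_i$, so $\supp(f) \subseteq U_1 \cup \cdots \cup U_n$.

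Next, I would invoke the defining property of a bisection: since the source map $\dom$ is injective on each $U_i$, the preimage $\dom^{-1}(x) \cap U_i$ contains at most one element, and likewise $\ran^{-1}(x) \cap U_i$ contains at most one element. Therefore
\[
\{\gamma \in \dom^{-1}(x) : f(\gamma) \neq 0\} \subseteq \bigcup_{i=1}^n (\dom^{-1}(x) \cap U_i)
\]
has at most $n$ elements, and the same bound holds for $\ran^{-1}(x)$. This gives the claim.

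There is no real obstacle: the proposition is essentially a tautological consequence of the generating set for $\Gamma_c(\mathscr G,\mathcal O)$ combined with the fact that bisections meet source- and range-fibers in at most one point. The only thing to be a bit careful about is to observe that potential cancellations among the $s_i\chi_{U_i}$ can only shrink the support, so the upper bound $n$ on the cardinality of the nonvanishing set over a fiber remains valid for $f$ itself.
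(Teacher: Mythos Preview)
Your argument is correct and is exactly the standard proof: write $f$ as a finite sum of elements supported on compact open bisections and use that each bisection meets a source- or range-fiber in at most one point. The paper itself does not prove this proposition; it is stated as a background fact recalled from~\cite{BenDan}, so there is no alternative approach to compare against.
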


Finally, to make $\Gamma_c(\mathscr G,\mathcal O)$ into a ring, we define the convolution of elements of $\Gamma_c(\mathscr G,\mathcal O)$ as follows. 
If $f,g\in \Gamma_c(\mathscr G,\mathcal O)$ and $\gamma\in \mathscr G\skel 1$, then
\begin{equation}\label{eq:define.conv}
f\ast g(\gamma) = \sum_{\beta\rho=\gamma}f(\beta) \alpha_{\beta}(g(\rho)).
\end{equation}

It is proved in \cite{BenDan} that 
if $f,g\in \Gamma_c(\mathscr G,\mathcal O)$, then $f\ast g\in \Gamma_c(\mathscr G,\mathcal O)$. In fact it is shown that if $f$ is supported on $U$ and $g$ is supported on $V$, with $U,V$ compact open bisections, then $f\ast g$ is supported on $UV$.  In the case that $\grp$ is Hausdorff, it is shown in~\cite{BenDan} that $\secring$  consists precisely of those continuous functions $f\colon \grp^{(1)}\to E$ with $p\circ f=\ran$ and compact support (i.e., the inverse image of the complement of the zero section is compact).   Furthermore, in \cite{BenDan} it is shown how to build a sheaf of rings so that the above construction yields the usual algebra of $\mathscr G$ over a unital ring $R$ from~\cite{Steinbergalgebra}. This sheaf is recalled below.

\begin{Example}\label{ex:constant.sheaf}
 Let $R$ be any unital ring, which we view as a space with the discrete topology.  We define the \emph{constant sheaf} of rings $\Delta(R)$ to be the $\mathscr G$-sheaf of rings with $E=R\times \mathscr G\skel 0$ and with $p\colon R\times \mathscr G\skel 0\to\mathscr G\skel 0$ the projection.  The addition and multiplication are pointwise, that is, $(r,x)+(r',x) = (r+r',x)$ and $(r,x)(r',x) = (rr',x)$.  The mapping $\alpha$ is given by $\alpha(\gamma)(r,\dom(\gamma)) = (r,\ran(\gamma))$.  Then $\Gamma_c(\mathscr G,\Delta(R))$ is the usual algebra of $\mathscr G$ over $R$, from~\cite{Steinbergalgebra}.
\end{Example}

\subsection{The disintegration theorem}\label{desintegration}

A key result in \cite{BenDan} is the disintegration theorem, which generalizes results in \cite{groupoidbundles} for Steinberg algebras. In this subsection, we recall the theorem as well as the related concepts.

Let $\mathscr G$ be an ample groupoid and $\mathcal O=(E,p,\alpha)$ be a $\mathscr G$-sheaf of rings.  Put $R=\Gamma_c(\mathscr G,\mathcal O)$; it is a ring with local units.  A (left) $R$-module $M$ is \emph{unitary} if $RM=M$.  We denote by $\module{R}$ the category of unitary (left) $R$-modules. 

A \emph{$\mathscr G$-sheaf of $\mathcal O$-modules} $\mathcal M=(F,q,\beta)$ is a $\mathscr G$-sheaf such that each stalk $\mathcal M_x$ has a (unitary) left $\mathcal O_x$-module structure such that:
\begin{enumerate}
  \item [(SM1)] addition $+\colon F\times_{q,q} F\to F$ is continuous;
  \item [(SM2)] the  module action $E\times_{p,q} F\to F$ is continuous;
  \item [(SM3)] $\beta_{\gamma}(rm) = \alpha_{\gamma}(r)\beta_{\gamma}(m)$ for all $r\in \mathcal O_{\dom(\gamma)}$ and $m\in \mathcal M_{\dom(\gamma)}$.
\end{enumerate}

The following result is proved in \cite{BenDan} and will be used in our text.

\begin{Prop}\label{p:lots.of.secs}
Let $X$ be a Hausdorff space with a basis of compact open sets and let $\mathcal A=(F,q)$ be a sheaf of abelian groups on $X$. Then, for each $f\in \mathcal A_x$, there is a section $s\colon X\to F$ with compact support such that $s(x)=f$.
\end{Prop}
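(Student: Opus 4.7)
The plan is to construct $s$ by choosing a local section through $f$ using the local homeomorphism property, cutting it off to a compact open neighborhood, and extending by zero. The hypothesis that $X$ has a basis of compact open sets together with the Hausdorff assumption is what makes the ``extension by zero'' continuous; that is the only genuine subtlety.

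First I would use that $q\colon F\to X$ is a local homeomorphism (since $\mathcal A$ is a sheaf) to pick an open neighborhood $V\subseteq F$ of $f$ such that $q|_V\colon V\to q(V)$ is a homeomorphism onto an open subset of $X$ containing $x$. Its inverse is a continuous local section $\sigma\colon q(V)\to F$ with $\sigma(x)=f$. Next I would invoke the hypothesis that $X$ has a basis of compact open sets to choose a compact open neighborhood $U$ of $x$ with $U\subseteq q(V)$ and set $t=\sigma|_U$.

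Then I would define $s\colon X\to F$ by
\[
s(y)=\begin{cases} t(y), & y\in U,\\ 0_y, & y\in X\setminus U.\end{cases}
\]
The key observation is that $U$ is clopen in $X$: it is open by hypothesis, and closed because compact subsets of a Hausdorff space are closed. Hence $X=U\sqcup(X\setminus U)$ is a disjoint decomposition into open sets, and $s$ is continuous because its restriction to each piece is continuous (to $U$ it equals the continuous section $t$, and to $X\setminus U$ it equals the zero section, which is continuous by the standard fact recalled in the paper for sheaves of abelian groups). By construction $s(x)=f$ and the support of $s$ is contained in the compact set $U$.

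The main (and only) obstacle is justifying the continuity of $s$ at points where $t$ is cut off; this is exactly where the combination ``compact open $+$ Hausdorff'' is used, forcing $U$ to be clopen so that no gluing on an overlap is needed. Without Hausdorffness one cannot guarantee $U$ is closed, and without the compact open basis one cannot trim the domain of $\sigma$ while preserving compact support.
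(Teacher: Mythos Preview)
Your proof is correct and is the standard argument for this fact. However, note that the paper does not actually prove this proposition: it is stated with the preface ``The following result is proved in \cite{BenDan} and will be used in our text,'' so there is no proof in the paper against which to compare your approach. Your argument---pick a local section via the local homeomorphism, trim to a compact open neighborhood $U$, and extend by zero using that $U$ is clopen in the Hausdorff space $X$---is exactly how one proves it, and is presumably what appears in the cited source.
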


If $\mathcal M$ is a $\mathscr G$-sheaf of $\mathcal O$-modules, then we can look at the set $M=\Gamma_c(\mathscr G,\mathcal M)$ of continuous sections $s\colon \mathscr G\skel 0\to F$ with compact support. This is an abelian group with pointwise operations.  We define an $R$-module structure on it by putting, for $f\in R$ and $m\in M$,
\[(fm)(x) = \sum_{\gamma\in \ran\inv(x)}f(\gamma)\beta_{\gamma}(m(\dom(\gamma))).\]

\begin{Prop}\label{p:is.unitary}
The construction $\mathcal M\longmapsto \Gamma_c(\mathscr G,\mathcal M)$ is a functor from the category of $\mathscr G$-sheaves of $\mathcal O$-modules to the category of $\module{\Gamma_c(\mathscr G,\mathcal O)}$.
\end{Prop}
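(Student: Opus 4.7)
The plan is to verify, in order, that (i) for each $f\in R$ and $m\in M=\Gamma_c(\mathscr G,\mathcal M)$ the formula
\[
(fm)(x) = \sum_{\gamma\in \ran\inv(x)} f(\gamma)\,\beta_{\gamma}(m(\dom(\gamma)))
\]
produces an element of $M$, (ii) it satisfies the module axioms, (iii) $RM=M$, and (iv) morphisms of $\mathscr G$-sheaves of $\mathcal O$-modules induce $R$-module homomorphisms in a way compatible with composition and identities.

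For (i), I would first reduce to the generating case: by bilinearity it suffices to take $f = s\chi_U$ for $s$ a section over $U$ and $U$ a compact open bisection, and $m\in M$. Since $\ran|_U$ is a homeomorphism onto $\ran(U)$, the sum collapses for each $x\in \ran(U)$ to the single term $s(x)\beta_{\gamma_x}(m(\dom(\gamma_x)))$, where $\gamma_x=(\ran|_U)\inv(x)$, and it is $0_x$ otherwise. Continuity then follows from axioms (SM1), (SM2) together with the fact that the zero section is continuous, and the support of $fm$ is contained in the image of $\supp(m)$ under $\ran\circ (\dom|_U)\inv$, hence is compact. (In the general case, Proposition~\ref{p:finiteness.prop} shows the sum is finite for each $x$, so the result is independent of the chosen decomposition of $f$.)

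For (ii), additivity in $f$ and in $m$ is immediate from the pointwise nature of the operations. For associativity $(f\ast g)m = f(gm)$, I would expand both sides:
\[
((f\ast g)m)(x)=\sum_{\gamma\in \ran\inv(x)}\sum_{\beta\rho=\gamma} f(\beta)\alpha_{\beta}(g(\rho))\beta_{\gamma}(m(\dom(\gamma)))
\]
and
\[
(f(gm))(x)=\sum_{\beta\in \ran\inv(x)} f(\beta)\,\beta_{\beta}\!\left(\sum_{\rho\in\ran\inv(\dom(\beta))}g(\rho)\beta_{\rho}(m(\dom(\rho)))\right).
\]
Pulling the sum out and using axiom (SM3) to move $\beta_{\beta}$ past products (so $\beta_{\beta}(g(\rho)\beta_{\rho}(m(\dom(\rho)))) = \alpha_{\beta}(g(\rho))\beta_{\beta\rho}(m(\dom(\rho)))$), followed by the reindexing $\gamma=\beta\rho$, produces the first expression. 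Unitarity (iii) is obtained by fixing $m\in M$, choosing by Proposition~\ref{p:lots.of.secs} a compact open subset $V\subseteq \mathscr G\skel 0$ containing $\supp(m)$ on which we have a continuous unit section, and observing $\chi_V m = m$, so $m\in RM$.

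For (iv), a morphism $\varphi\colon \mathcal M\to \mathcal N$ of $\mathscr G$-sheaves of $\mathcal O$-modules yields $\varphi_{\ast}\colon \Gamma_c(\mathscr G,\mathcal M)\to\Gamma_c(\mathscr G,\mathcal N)$ by $\varphi_{\ast}(m)(x)=\varphi_x(m(x))$; continuity of $\varphi$ and the fact that $\varphi_x$ is zero-preserving guarantee that $\varphi_{\ast}(m)$ is again continuous with compact support. That $\varphi_{\ast}$ commutes with $R$-actions is just the assumption that each $\varphi_x$ is $\mathcal O_x$-linear and intertwines the $\beta$'s. Functoriality (preservation of identities and composition) is immediate from the pointwise definition. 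I expect the main technical hurdle to be the associativity computation in (ii): bookkeeping the double sums, interchanging them using Proposition~\ref{p:finiteness.prop}, and correctly applying (SM3) to move the sheaf action past the module action is the only step that is not essentially formal.
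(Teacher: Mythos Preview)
The paper does not itself prove this proposition: it is stated as a background result, imported from~\cite{BenDan}, so there is no in-paper proof to compare against. Your outline is the standard one and is essentially correct.

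Two small points of precision. In (i), the axiom (SM1) concerns addition and is not what gives continuity of $fm$ on $\ran(U)$; what you need is continuity of the action map $\beta$ (part of the $\mathscr G$-sheaf data) together with (SM2), plus the fact that $\ran(U)$ is both compact and open so that the zero extension to $\mathscr G\skel 0\setminus \ran(U)$ glues to a global continuous section. In (iii), Proposition~\ref{p:lots.of.secs} is about producing sections through a given stalk element, not about producing compact open sets; the existence of a compact open $V\supseteq \supp(m)$ comes directly from the hypothesis that $\mathscr G\skel 0$ has a basis of compact open sets (cover $\supp(m)$ and pass to a finite subcover). Neither point affects the validity of your argument.
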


A quasi-inverse for the functor above is constructed in \cite{BenDan}. We recall the main concepts below.

Let $M$ be a unitary $R$-module and $x\in \mathscr G\skel 0$. Let \[N_x = \{m\in M\mid \chi_Um=0\ \text{for some}\ U\subseteq \mathscr G\skel 0\ \text{compact open with}\ x\in U\},\] and define
$ M_x:= M/N_x$. Furthermore, define \[F= \coprod_{x\in \mathscr G\skel 0} M_x\] and $q\colon F\to \mathscr G\skel 0$ by $q([m]_x) = x$.  Put a topology on $F$ by taking as a basis all sets of the form
\[D(m,U) = \{[m]_x\mid x\in U\},\] where $m\in M$ and $U\subseteq \mathscr G\skel 0$ is compact open.  To define the $\mathscr G$-sheaf structure, for $\gamma\in \mathscr G$, put \[\beta_{\gamma}([m]_{\dom(\gamma)}) = [\chi_Um]_{\ran(\gamma)},\] where $U$ is any compact open bisection containing $\gamma$.  Then $\beta\colon \mathscr G\skel 1\times_{\dom,q} F\to F$ is well defined, continuous, and turns $$\mathrm{Sh}(M) := (F,q,\beta)$$ into a $\mathscr G$-sheaf.  
Moreover, $\mathrm{Sh}(M)$ is a  $\mathscr G$-sheaf of abelian groups with respect to the fiberwise addition $[m]_x+[n]_x=[m+n]_x$ (i.e., addition is fiberwise continuous and each $\beta_\gamma$ is an additive homomorphism). To define the $\mathcal O_x$-module structure on $M_x$, let $r\in \mathcal O_x$. Choose a section $t\in \Gamma_c(\mathscr G\skel 0,\mathcal O)$ with $t(x) = r$ (using Proposition~\ref{p:lots.of.secs}), and define $r[m]_x = [tm]_x$. Then $\mathrm{Sh}(M)$ is a $\mathscr G$-sheaf of $\mathcal O$-modules and, as proved in \cite{BenDan}, we have that the construction  $M\longmapsto \mathrm{Sh}(M)$ is a functor, as we make precise below.

\begin{Prop}\label{p:mod.to.sheaf}
The construction $M\longmapsto \mathrm{Sh}(M)$ is a functor from the category $\module{\Gamma_c(\mathscr G,\mathcal O)}$ to the category of $\mathscr G$-sheaves of $\mathcal O$-modules.
\end{Prop}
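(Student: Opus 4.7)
The plan is to verify in three stages: (i) the space $F = \coprod_x M_x$ with projection $q$ is a sheaf of sets over $\mathscr G^{(0)}$; (ii) the action $\beta$ together with the stalkwise module structures turn it into a $\mathscr G$-sheaf of $\mathcal O$-modules; and (iii) module homomorphisms induce morphisms of such sheaves, respecting identity and composition. Throughout, the recurring technical tool is the observation that an element of $N_x$ can be made literally zero in $M$ by multiplying by $\chi_W$ for a sufficiently small compact open $W \ni x$.

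For (i), I would first check that $\{D(m,U)\}$ satisfies the basis axiom: if $[m]_x = [n]_x$ for some $x \in U \cap V$, then by definition of $N_x$ there exists a compact open $W \ni x$ with $\chi_W(m - n) = 0$; shrinking $W$ into $U \cap V$ yields $D(m, W) = D(n, W) \subseteq D(m,U) \cap D(n,V)$. Then $q|_{D(m,U)}$ is a bijection onto $U$ with continuous inverse $y \mapsto [m]_y$, making $q$ a local homeomorphism. Fiberwise addition $[m]_x + [n]_x = [m+n]_x$ is easily seen to be well defined and continuous on the basis.

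For (ii), the critical well-definedness step is that $\beta_\gamma([m]_{\dom(\gamma)}) = [\chi_U m]_{\ran(\gamma)}$ is independent of the compact open bisection $U \ni \gamma$: given two such $U, U'$, if $W$ is any compact open subset of $\ran(U \cap U')$ containing $\ran(\gamma)$, then the bisection property forces the restricted bisections $WU$ and $WU'$ to coincide with $W(U \cap U')$, so $\chi_W \chi_U = \chi_W \chi_{U'}$ in $\Gamma_c(\mathscr G, \mathcal O)$, hence $\chi_W(\chi_U m - \chi_{U'} m) = 0$. The sheaf axioms (S1)--(S3) then reduce to standard identities such as $\chi_{UV} = \chi_U \ast \chi_V$ for composable bisections. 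Continuity of $\beta$ is checked on the basis, since $U \times_{\dom,q} D(m,\dom(U))$ is mapped into $D(\chi_U m, \ran(U))$. For the $\mathcal O_x$-module structure, well-definedness of $r[m]_x = [tm]_x$ uses continuity of the zero section: if $t, t' \in \Gamma_c(\mathscr G^{(0)}, \mathcal O)$ agree at $x$, then $t - t'$ vanishes on a compact open neighborhood $W$ of $x$, giving $\chi_W(t - t')m = 0$. Continuity of scalar multiplication and axiom (SM3) are then routine checks on the basis, using Proposition~\ref{p:lots.of.secs} to realize prescribed stalk values in $\mathcal O_x$ by global sections with compact support.

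For (iii), a morphism $\phi: M \to N$ of unitary $R$-modules satisfies $\phi(\chi_U m) = \chi_U \phi(m)$, so $\phi(N_x^M) \subseteq N_x^N$ and $\phi$ descends to stalkwise maps $\phi_x: M_x \to N_x$. These assemble into $\mathrm{Sh}(\phi)$ sending $D(m,U)$ into $D(\phi(m), U)$, which is therefore continuous, compatible with $\beta$, and $\mathcal O_x$-linear on stalks; preservation of identities and composition is immediate. The main obstacle is really the bookkeeping required for well-definedness of $\beta_\gamma$ and the scalar action, together with continuity of all structure maps; all of these reductions ultimately hinge on the local-to-zero principle for $N_x$ described above and on the ability to restrict compact open bisections arbitrarily close to a given point in an ample groupoid.
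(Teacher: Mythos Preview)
Your proposal is a correct and reasonably detailed outline of the verification. However, there is no ``paper's own proof'' to compare against: the paper does not prove Proposition~\ref{p:mod.to.sheaf} but merely recalls the construction and defers the proof to the reference~\cite{BenDan} (see the sentence immediately preceding the proposition: ``as proved in~\cite{BenDan}, we have that the construction $M\longmapsto \mathrm{Sh}(M)$ is a functor'').

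That said, your sketch is essentially how the argument in~\cite{BenDan} proceeds, and the key mechanism you isolate---that membership in $N_x$ can be witnessed by literal vanishing after multiplication by some $\chi_W$ with $x\in W$---is exactly what drives every well-definedness and continuity check. A couple of minor remarks: in part~(ii) you explicitly treat independence of $\beta_\gamma$ from the bisection $U$, but you should also note independence from the representative $m$ (if $\chi_V(m-m')=0$ with $\dom(\gamma)\in V$, then $\chi_{\ran(UV)}\chi_U(m-m')=\chi_{UV}(m-m')=\chi_U\chi_V(m-m')=0$); and for the well-definedness of the $\mathcal O_x$-action, the phrase ``continuity of the zero section'' is slightly misleading---what you are really using is that two continuous sections of an \'etale space agreeing at a point must agree on a neighborhood, which is the local-homeomorphism property of $p$. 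Neither of these is a genuine gap, just points where a reader filling in details might pause.
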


Combining the last two propositions we get.

\begin{Thm}[Disintegration theorem]\label{t:disint}
The functors $M\longmapsto \mathrm{Sh}(M)$ and $\mathcal M\longmapsto \Gamma_c(\mathscr G,\mathcal O)$ provide an equivalence between the category $\module{\Gamma_c(\mathscr G,\mathcal O)}$ of unitary $\Gamma_c(\mathscr G,\mathcal O)$-modules and the category of $\mathscr G$-sheaves of $\mathcal O$-modules.
\end{Thm}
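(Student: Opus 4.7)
The plan is to construct a unit natural transformation $\eta_M\colon M\to \Gamma_c(\mathscr G,\mathrm{Sh}(M))$ and a counit $\varepsilon_{\mathcal M}\colon \mathrm{Sh}(\Gamma_c(\mathscr G,\mathcal M))\to \mathcal M$ and show each is a natural isomorphism. The definitions are the obvious ones: $\eta_M(m)$ is the section $x\mapsto [m]_x$, and $\varepsilon_{\mathcal M}$ is defined stalkwise by $[s]_x\mapsto s(x)$. Once these are verified to be well-defined morphisms in the appropriate categories, the remaining work is bijectivity and naturality, which both reduce to refinement arguments in the Boolean algebra of compact open subsets of $\mathscr G\skel 0$.

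The key computation for $\eta_M$ is that for $m\in M$ and a compact open $U\subseteq \mathscr G\skel 0$, one has $[\chi_Um]_y=[m]_y$ for $y\in U$ and $[\chi_Um]_y=0$ for $y\notin U$: in the first case, a compact open $V\subseteq U$ with $y\in V$ gives $\chi_V(\chi_Um-m)=0$; in the second, a compact open $V\ni y$ disjoint from $U$ gives $\chi_V\chi_Um=0$. Combined with the unitarity of $M$, writing $m=\sum_i\chi_{U_i}m_i$ shows $\eta_M(m)$ is a finite sum of sections supported on compact opens, hence lies in $\Gamma_c(\mathscr G,\mathrm{Sh}(M))$; continuity of $\eta_M(m)$ follows because $\eta_M(m)\inv(D(n,V))$ is explicitly open. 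For injectivity, if $\eta_M(m)=0$ and $m=\chi_Wm$, cover $W$ by finitely many compact opens $W_i\subseteq W$ with $\chi_{W_i}m=0$, refine to a pairwise disjoint cover $V_1,\ldots,V_n$ of $W$ with $V_j\subseteq W_{i(j)}$, and use $\chi_W=\sum_j\chi_{V_j}$ (orthogonal idempotents in $\Gamma_c(\mathscr G,\mathcal O)$) to get $m=\chi_Wm=\sum_j\chi_{V_j}m=0$. For surjectivity, given $s$ with compact support $K$, continuity produces for each $x\in K$ a compact open $U_x\ni x$ and $m_x\in M$ with $s(y)=[m_x]_y$ throughout $U_x$; by compactness finitely many $U_{x_i}$ suffice, and refining to a pairwise disjoint compact open cover $V_1,\ldots,V_n$ of $K$ with $V_j\subseteq U_{x_{i(j)}}$, the element $m=\sum_j\chi_{V_j}m_{x_{i(j)}}$ satisfies $\eta_M(m)=s$ by the key computation.

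For $\varepsilon_{\mathcal M}$, the stalk map $[s]_x\mapsto s(x)$ is well-defined because $\chi_Us=0$ for a compact open $U\ni x$ implies $s(x)=(\chi_Us)(x)=0$; it is surjective by Proposition~\ref{p:lots.of.secs}, and it is injective because $s$ and the zero section of $q$ are continuous sections of a local homeomorphism that agree at $x$, hence agree on an open neighborhood, inside which a compact open $U\ni x$ yields $\chi_Us=0$ and so $[s]_x=0$. To upgrade the stalkwise bijection to a sheaf isomorphism, observe that $\varepsilon_{\mathcal M}$ sends a basic open $D(s,U)$ to $s(U)$, which is open since $s$ is a section of the local homeomorphism $q$; a continuous stalkwise bijection between local homeomorphisms over $\mathscr G\skel 0$ is automatically a homeomorphism. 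Compatibility with the $\mathcal O$-action, the $\mathscr G$-action $\beta$, addition, and naturality in both variables follow by direct unfolding of the definitions of $\mathrm{Sh}(-)$ and $\Gamma_c(\mathscr G,-)$.

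The main obstacle is the surjectivity of $\eta_M$: realizing an abstract compactly supported section of $\mathrm{Sh}(M)$ as an element of $M$ requires gluing local representatives, and this is only possible because the ample hypothesis allows every finite open cover of a compact open set to be refined into a pairwise disjoint compact open cover. After this refinement, the orthogonality of the $\chi_{V_j}$ eliminates all cross-terms; the same device drives the injectivity argument. The secondary subtle point is the homeomorphism upgrade for $\varepsilon_{\mathcal M}$, which hinges on the general fact that sections of \'etale maps are open, and on the topology of $\mathrm{Sh}(\Gamma_c(\mathscr G,\mathcal M))$ being generated by the images of compactly supported sections.
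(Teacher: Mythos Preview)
Your proof is correct and follows the standard approach: construct the unit and counit of the putative adjoint equivalence and verify each is a natural isomorphism, with the refinement-to-disjoint-compact-opens trick doing the real work in both injectivity and surjectivity of $\eta_M$. One minor point: when you write $m=\sum_i\chi_{U_i}m_i$ ``by unitarity,'' what you actually need (and what suffices) is that $\secring$ has local units given by $\chi_U$ with $U\subseteq\grp\skel 0$ compact open, so that $m=\chi_W m$ for some such $W$; this immediately gives compact support of $\eta_M(m)$.

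The paper, however, does not prove this theorem at all: it is recalled from~\cite{BenDan} as background material, and the sentence preceding the statement (``Combining the last two propositions we get'') is simply indicating that Propositions~\ref{p:is.unitary} and~\ref{p:mod.to.sheaf} supply the two functors whose equivalence is asserted. So there is no proof in the paper to compare against; your argument is essentially the one carried out in the cited reference.
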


It follows from the above that there are natural isomorphisms $M\cong \Gamma_c(\mathscr G,\mathrm{Sh}(M))$ and $\mathcal M\cong \mathrm{Sh}(\Gamma_c(\mathscr G,\mathcal M))$.

\begin{Rmk}\label{r:find.ideals}
As a consequence of Theorem~\ref{t:disint}, $M\cong \Gamma_c(\mathscr G,\mathrm{Sh}(M))$ for any unitary module $M$ and so we have an equality of annihilator ideals \[\Ann(M)=\Ann(\Gamma_c(\mathscr G,\mathrm{Sh}(M))).\] In particular, for every ideal $I$ of $ \secring$ we have \[I=\Ann(\secring /I)=\Ann(\Gamma_c(\mathrm{Sh}(\secring/I))).\] 
\end{Rmk}

\subsection{Skew inverse semigroup rings}

By a \emph{partial automorphism} of a ring $A$, we mean a ring isomorphism $\varphi\colon I\to J$ between two-sided ideals $I,J$ of $A$.  The collection of all partial automorphisms of $A$ forms an inverse monoid that we denote $I_A$.  If $S$ is an inverse semigroup, then an \emph{action} of $S$ on $A$ is a homomorphism $\alpha\colon S\to I_A$, usually written $s\mapsto \alpha_s$.  The domain of $\alpha(s)$ is denoted $D_{s^*}$ and the range is then $D_s$.  We say that the action is \emph{non-degenerate} if \[\sum_{e\in E(S)} D_e=A.\]

 To ensure associativity of the skew inverse semigroup ring, we assume that each $D_s$ is a ring with local units (although weaker conditions suffice). Given an action $\alpha$ of $S$ on a ring $A$,
the construction of the corresponding skew inverse semigroup ring is done in three steps.
\begin{enumerate}
	\item First we consider the set
\begin{equation}\label{eq:define.L}
	\mathcal{L} = \left\{ \sum_{s\in S}^{\text{finite}} a_s \delta_s \mid \ a_s \in D_s \right\}\cong \bigoplus_{s\in S} D_s
\end{equation}
where $\delta_s$, for $s\in S$, is a formal symbol (and $0\delta_s=0$).
We equip $\mathcal{L}$ with component-wise addition and with multiplication
defined as the linear extension of the rule
\[
	(a_s \delta_s)(b_t \delta_t) = \alpha_{s}(\alpha_{s^*}(a_s) b_t) \delta_{st}.
\]

	\item Then, we consider the ideal
	\begin{equation}\label{eq:define.N}
	\mathcal{N} = \langle a \delta_r - a \delta_s \mid r,s \in S, \ r \leq s \text{ and } a \in D_r \rangle,
\end{equation}
i.e., $\mathcal{N}$ is the ideal of $\mathcal{L}$ generated by all elements of the form $a\delta_r - a\delta_s$, where $r\leq s$ and $a \in D_r$.   It is shown in~\cite[Lemma~2.3]{BGOR}, that these elements already generate $\mathcal{N}$ as an additive group.
	\item Finally, we define the corresponding
	\emph{skew inverse semigroup ring}, which we denote by $A\rtimes S$, as the quotient ring $\mathcal{L}/\mathcal{N}$.
\end{enumerate}
If $S$ is a group, then the ideal $\mathcal N$ is the zero ideal and the multiplication simplifies to the rule $a\delta_s\cdot b\delta_t = a\alpha_s(b)\delta_{st}$, and so $A\rtimes S$ is the familiar skew group ring. Notice also that the quotient of $\mathcal{L}$ by any ideal yields a system, in the sense of \cite[Pg 2]{Nystedt1}.

A key class of inverse semigroup actions is that of spectral actions. Recall from \cite{BenDan} that an action $\alpha$ of an inverse semigroup $S$ on $A$ is called \emph{spectral} if it is non-degenerate and $D_e$ has a unit element $1_e$ for each $e\in E(S)$, which is necessarily a central idempotent of $A$. In the case of a spectral action, the central idempotents in the Boolean algebra generated by the $1_e$ are a set of local units for $A$.

\subsection{The interplay between groupoid convolution algebras with coefficients in a sheaf of rings and skew inverse semigroup rings.}\label{sec:interplay}

It is shown in \cite{BenDan} that, under certain mild conditions, groupoid convolution algebras with coefficients in a sheaf of rings can be realized as skew inverse semigroup rings and vice versa. We recall this more precisely below.

We denote by $\mathscr G^a$ the set of all compact open bisections of $\mathscr G$. It is an inverse semigroup with operations given by \[BC = \{ bc \in \mathscr G \mid b\in B, c\in C, \text{ and } \dom(b) = \ran(c)\}\] and $B^*=\{b^{-1}\mid b\in B\}$. 

Let $\mathcal O=(E,p,\alpha)$ be a $\mathscr G$-sheaf of rings.  Put $A=\Gamma_c(\mathscr G\skel 0,\mathcal O)$, which we view as the ring of compactly supported continuous sections of $\mathcal O$ over $\mathscr G\skel 0$ with pointwise operations. In \cite{BenDan} a spectral action of $S=\grp^a$ on $A$ is defined. 
If $U\subseteq \mathscr G\skel 0$ is open then we put $\mathcal O|_U = (p\inv(U),+,\cdot)$; it is a sheaf of rings on $U$.  Then $A(U)=\Gamma_c(U,\mathcal O|_U)$ can be identified with the subring of $A$ consisting of sections supported on $U$. For $s\in S$, put $D_s = A(\ran(s))$.  Note that since $\ran(s)$ is compact open, $D_s$ has an identity, the mapping $\chi_{\ran(s)}$.   We define an isomorphism $\til \alpha_s\colon D_{s^*}\to D_s$ by
\[\til \alpha_s(f)(\ran(\gamma)) = \alpha_{\gamma}(f(\dom(\gamma)))\] for $\gamma\in s$ and $f\in D_{s^*}$. We then have the following (which is proved more generally for inverse subsemigroups of $\grp^a$ satisfying certain conditions).

\begin{Thm}[\!{\cite[Theorem~7.1]{BenDan}}]\label{siri}
Let $\mathcal O$ be a $\mathscr G$-sheaf of rings on an ample groupoid $\mathscr G$. Then, \[\Gamma_c(\mathscr G,\mathcal O)\cong \Gamma_c(\mathscr G\skel 0,\mathcal O)\rtimes \mathscr G^a\]
as rings.
\end{Thm}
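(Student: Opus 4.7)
The plan is to build the isomorphism explicitly via the evident map. On the generators $f\delta_s$ of $\mathcal L=\bigoplus_{s\in \mathscr G^a} D_s$, define
\[\Phi(f\delta_s)=f\chi_s,\]
where $f\in D_s=A(\ran(s))$ is regarded as a (compactly supported) continuous section of $\mathcal O$ over $\ran(s)$ and $f\chi_s\in\Gamma_c(\mathscr G,\mathcal O)$ is the element supported on the compact open bisection $s$ defined by $\gamma\mapsto f(\ran(\gamma))$ for $\gamma\in s$. Extend additively to $\mathcal L$. I would then verify four things: (i) $\Phi$ is multiplicative; (ii) $\Phi$ vanishes on $\mathcal N$; (iii) the induced $\bar\Phi\colon A\rtimes \mathscr G^a\to\Gamma_c(\mathscr G,\mathcal O)$ is surjective; and (iv) $\bar\Phi$ is injective.

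For (i), I would compute both sides on elementary generators. The convolution $(f\chi_s)*(g\chi_t)$ is supported on the bisection $st$, because for $\gamma\in st$ the factorization $\gamma=\beta\rho$ with $\beta\in s$, $\rho\in t$ is unique; using \eqref{eq:define.conv} its value there is $f(\ran(\gamma))\,\alpha_\beta(g(\dom(\beta)))$. On the algebraic side $(f\delta_s)(g\delta_t)=\til\alpha_s(\til\alpha_{s^*}(f)\,g)\delta_{st}$, and unwinding the definition of $\til\alpha$ stalkwise (applying $\alpha_\beta\circ\alpha_{\beta^{-1}}=\mathrm{id}$ on the $f$ factor and using that $\alpha_\beta$ is a ring homomorphism) yields the same expression at $\ran(\gamma)$. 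For (ii), given a generator $a\delta_r-a\delta_s$ of $\mathcal N$ with $r\subseteq s$ and $a\in D_r$, the values of $\Phi(a\delta_r)$ and $\Phi(a\delta_s)$ agree on $r$; for $\gamma\in s\setminus r$, the injectivity of $\ran|_s$ together with the fact that $a$ is supported on $\ran(r)$ forces $\ran(\gamma)\notin\ran(r)$, so $a(\ran(\gamma))=0$. Thus $\bar\Phi$ is well defined. For (iii), the defining description of $\Gamma_c(\mathscr G,\mathcal O)$ says every element is a finite sum of terms $t\chi_U$ with $U$ a compact open bisection and $t$ a section over the compact open set $\ran(U)$; any such $t$ lies in $D_U$, and $t\chi_U=\bar\Phi(t\delta_U)$.

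The main obstacle is (iv). My plan is a normal-form/refinement argument: given $x=\sum_{i=1}^n a_i\delta_{s_i}\in \mathcal L$, I would produce a finite family of pairwise disjoint compact open bisections $t_1,\dots,t_m$ such that each $s_i$ is a disjoint union $\bigsqcup_{j\in J_i} t_j$, then use the relation $a\delta_r=a\delta_s$ (for $r\le s$, $a\in D_r$) to rewrite
\[x\equiv \sum_j\Bigl(\sum_{i:\,j\in J_i} a_i|_{\ran(t_j)}\Bigr)\delta_{t_j}\pmod{\mathcal N}.\]
If $\bar\Phi(x)=0$, then $\sum_j b_j\chi_{t_j}=0$ in $\Gamma_c(\mathscr G,\mathcal O)$; disjointness of the $t_j$ forces each $b_j\chi_{t_j}=0$, hence each $b_j=0$ pointwise on $\ran(t_j)$, and so $x\in\mathcal N$. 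The refinement step is built by pulling back Boolean operations from the compact Hausdorff sets $\ran(s_i)$ along the homeomorphisms $\ran|_{s_i}$, first localizing to a single compact open bisection containing each piece so as to avoid pathologies when $\mathscr G$ is non-Hausdorff, and then iterating on the finite family $\{s_i\}$.

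The delicate point, and the reason the theorem is proved in \cite{BenDan} under mild extra hypotheses on the inverse subsemigroup of bisections used, is precisely this refinement: in a non-Hausdorff ample groupoid the intersection of two compact open bisections need not be closed in either, so the refinement must be carried out inside fixed compact (Hausdorff) bisections where compact open subsets do form a Boolean algebra. Once this bookkeeping is in place, injectivity follows as above, and combined with (i)--(iii) gives the desired ring isomorphism.
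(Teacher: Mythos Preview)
The paper does not prove this statement; it is quoted verbatim from \cite[Theorem~7.1]{BenDan} as background, so there is no in-paper argument to compare against directly.

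That said, your outline is the natural direct construction, and steps (i)--(iii) are correct and routine. The only substantive content is (iv), and here you correctly locate the difficulty but do not actually resolve it. Your plan is to disjointify the finite family $\{s_i\}$ into compact open bisections $t_j$; in a non-Hausdorff ample groupoid this fails as stated, because $s_i\cap s_j$ is open but generally neither compact nor closed in $s_i$, so $s_i\setminus s_j$ need not be open and no finite Boolean refinement of the $s_i$ exists globally. Saying ``localize to a single compact open bisection'' is the right instinct (each $s_i$, being homeomorphic to $\ran(s_i)\subseteq\mathscr G\skel 0$, is Hausdorff, and its compact open subsets do form a Boolean algebra), but you still have to explain how to compare pieces living in \emph{different} $s_i$'s modulo $\mathcal N$, and that is exactly the step you leave as ``bookkeeping.''

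For comparison, the approach visible in this paper for the closely analogous Proposition~\ref{ccgpd} (and presumably in \cite{BenDan} itself) avoids the refinement altogether: one packages the maps $a\mapsto a\chi_{\mathscr G\skel 0}$ and $s\mapsto \chi_s$ as a covariant system, invokes the universal property of the skew ring (\cite[Theorem~3.5]{BenDan}) to obtain $\bar\Phi$, and then writes down an explicit inverse $\Psi$ by sending $f$ supported on a compact open bisection $U$ to $(f\circ(\ran|_U)^{-1})\,\delta_U$ and checking this is independent of the decomposition of $f$. That route trades the disjointification problem for a (milder) well-definedness check on $\Psi$, which is where the ``mild extra hypotheses'' on the inverse subsemigroup enter. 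Your direct approach would work cleanly in the Hausdorff case; for the general statement you would need to either carry out the explicit-inverse argument or supply the missing combinatorics.
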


To obtain the converse characterization, we recall some key concepts first.

Given a Boolean action $\rho$ of an inverse semigroup $S$ on a space $X$, the \emph{groupoid of germs} $\mathscr G=S\ltimes X$ is defined as follows (see \cite{Steinbergalgebra}).  The unit space $\mathscr G\skel 0$ is taken to be $X$ and \[\mathscr G\skel 1 = \{(s,x)\in S\times X\mid x\in D_{s^*}\}/{\sim}\] where $(s,x)\sim (t,y)$ if and only if $x=y$ and there exists $u\leq s,t$ with $x\in D_{u^*}$.  We write $[s,x]$ for the class of $(s,x)$. The source and range maps are defined by $\dom([s,x]) = x$ and $\ran([s,x]) = \rho_s(x)$, the product is defined by $[s,\rho_t(x)][t,x] = [st,x]$, and the inverse is given by $[s,x]\inv = [s^*,\rho_s(x)]$.  The topology on $\mathscr G\skel 0$ is that of $X$, whereas a basis of neighborhoods for $\mathscr G\skel 1$ is given by the sets $(s,U)$, where $U\subseteq D_{s^*}$ is open (compact open if $X$ is zero-dimensional), and $(s,U)=\{[s,x]\mid x\in U\}.$

We also need to recall the generalized Stone space of a generalized Boolean algebra $B$.  A \emph{character} of $B$ is a non-zero Boolean algebra homomorphism $\lambda\colon B\to \{0,1\}$ to the two-element Boolean algebra.  
The (generalized) \emph{Stone space} of $B$ is the space $\wh {B}$ of characters of $B$ topologized by taking as a basis the sets
\[D(a)=\{\lambda\in \wh{B}\mid \lambda(a)=1\}\] with $a\in B$.

Let $\alpha$ be a spectral action of an inverse semigroup $S$ on a ring $A$.
We want to define a Boolean action of $S$ on the Pierce spectrum $\wh A$ of $A$ (recall from \cite{BenDan} that the Pierce spectrum of $A$ is $ \wh {B}$, where 
$B=E(Z(A))$). 

For $s\in S$, let \[\wh D_{s} = \{\lambda\in \wh A\mid \lambda(1_{ss^*})=1\}.\]   Notice that $\wh D_s$ is compact open and can be identified with the Pierce spectrum of $D_s$.   Define $\wh \alpha_s\colon \wh D_{s^*}\to \wh D_s$ by \[\wh \alpha_s(\lambda)(e) = \lambda (\alpha_{s^*}(e1_{ss^*}))\] for $e\in E(Z(A))$.

Let $\mathscr G=S\ltimes \wh A$ be the corresponding groupoid of germs.  It is ample as $\wh A$ has a basis of compact open sets. We want to define a $\mathscr G$-sheaf of $\mathcal O_A$ rings such that $A\rtimes S\cong \Gamma_c(\mathscr G,\mathcal O_A)$. For this, 
$\mathcal O_A$ is the sheaf of unital rings constructed as follows:
The underlying space of $\mathcal O_A$ is defined to be $E=\coprod_{\lambda\in \wh{E(Z(A))}} A/I_{\lambda}$, where \[I_{\lambda} = \{a\in A\mid \exists e\in \lambda\inv(1)\ \text{with}\ ea=0\}\] and the class $a+I_{\lambda}$ is denoted $[a]_{\lambda}$. If $r\in A$ and $e\in E(Z(A))$, then we put
\[(r,D(e)) = \{[r]_{\lambda}\mid \lambda \in D(e)\}.\]   The sets of the form $(r,D(e))$ form a basis for a topology on $E$ and $p\colon E\to \wh{B}$, defined by $p([r]_{\lambda}) = \lambda$, maps $(r,D(e))$ homeomorphically to $D(e)$, whence $p$ is a local homeomorphism.  The ring structures on the $A/I_{\lambda}$ turn $\mathcal O_A=(E,p,+,\cdot)$ into a sheaf of unital rings on $\wh{E(Z(A))}$. Each stalk of $O_A$ is $\mathcal O_{A,\lambda} = A/I_{\lambda}$, and the unit is the class $[e]_{\lambda}$ where $\lambda(e)=1$.  We obtain the $\mathscr G$-sheaf structure on $\mathcal O_A$ by putting
\begin{equation}\label{eq:sheaf.action.gpd.germs}
\alpha_{[s,\lambda]}([a]_{\lambda}) = [\alpha_s(1_{s^*s}a)]_{\wh \alpha_s(\lambda)}
\end{equation}
for $[s,\lambda]\in\mathscr G\skel 1$.  

\begin{Thm}[\!\!{\cite[Theorem~9.5]{BenDan}}]\label{t:sheaf.Pierce}
If $S$ is an inverse semigroup with a spectral action $\alpha$ on a ring $A$, then $A\rtimes S\cong \Gamma_c(S\ltimes \wh A,\mathcal O_A)$ with the $S\ltimes \wh A$-sheaf  structure on $\mathcal O_A$ coming from \eqref{eq:sheaf.action.gpd.germs} and the usual sheaf of rings structure on $\mathcal O_A$ over the Pierce spectrum $\wh A$.
\end{Thm}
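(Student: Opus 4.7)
The plan is to write down an explicit ring homomorphism $\Phi\colon \mathcal L\to \Gamma_c(\mathscr G, \mathcal O_A)$, where $\mathscr G = S\ltimes \wh A$, show it factors through $A\rtimes S = \mathcal L/\mathcal N$, and prove the induced map $\ov\Phi$ is a bijection. For $s\in S$ and $a\in D_s$, the compact open bisection $U_s := (s, \wh D_{s^*})$ of $\mathscr G$ has range $\wh D_s$, and $\mu\mapsto [a]_\mu$ is a continuous section of $\mathcal O_A$ with support inside $\wh D_s$ (because $a = 1_{ss^*}a$ forces $[a]_\mu=0$ off $\wh D_s$). I would set
\[\Phi(a\delta_s)([t,\lambda]) = \begin{cases} [a]_{\wh\alpha_s(\lambda)}, & [t,\lambda]\in U_s,\\ 0, & \text{otherwise,}\end{cases}\]
so $\Phi(a\delta_s)\in \Gamma_c(\mathscr G, \mathcal O_A)$.

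Next I would verify that $\Phi$ is a ring homomorphism vanishing on $\mathcal N$. Additivity is immediate from fiberwise operations. For multiplicativity, I compute both $\Phi(\alpha_s(\alpha_{s^*}(a)b)\delta_{st})$ and $\Phi(a\delta_s)\ast\Phi(b\delta_t)$ at a germ $[st,\lambda]$, using the unique decomposition $[st,\lambda] = [s,\wh\alpha_t(\lambda)]\cdot[t,\lambda]$ in the product of bisections, the sheaf action \eqref{eq:sheaf.action.gpd.germs}, and the identity $\alpha_s(\alpha_{s^*}(a)b) = a\cdot\alpha_s(1_{s^*s}b)$; both sides collapse to $[\alpha_s(\alpha_{s^*}(a)b)]_{\wh\alpha_{st}(\lambda)}$. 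To see $\Phi(a\delta_r - a\delta_s)=0$ for $r\leq s$ and $a\in D_r$, the germ equivalence gives $U_r\subseteq U_s$ and $\wh\alpha_r|_{\wh D_{r^*}} = \wh\alpha_s|_{\wh D_{r^*}}$, so both sections agree on $U_r$, while on $U_s\setminus U_r$ the identity $\alpha_{s^*}(1_{rr^*}) = 1_{r^*r}$ (valid for spectral actions once $r = rr^*s$) together with $a = 1_{rr^*}a$ gives $\wh\alpha_s(\lambda)(1_{rr^*}) = \lambda(1_{r^*r}) = 0$, so $[a]_{\wh\alpha_s(\lambda)}=0$. Thus $\Phi$ descends to $\ov\Phi\colon A\rtimes S\to \Gamma_c(\mathscr G, \mathcal O_A)$.

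For surjectivity, every element of $\Gamma_c(\mathscr G, \mathcal O_A)$ is a finite sum of sections supported on compact open bisections, and each such bisection may be refined into disjoint basic pieces $(s,D(e))$ with $s\in S$, $e\in E(Z(A))$, and $e\leq 1_{s^*s}$. The range of such a piece is $D(\alpha_s(1_e))$, and by Pierce theory any section of $\mathcal O_A$ there is $\mu\mapsto [a']_\mu$ for some $a'\in \alpha_s(1_e)A\subseteq D_s$. Using $\alpha_{s^*}(\alpha_s(1_e)) = 1_e$, one checks that $\Phi(a'\delta_s)$ is supported precisely on $(s,D(e))$ and realizes the given section, so $\ov\Phi$ is onto.

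The main obstacle is injectivity. The cleanest route is to combine Theorem~\ref{siri} applied to $\mathcal O_A$ on $\mathscr G$ (giving $\Gamma_c(\mathscr G, \mathcal O_A)\cong \Gamma_c(\wh A, \mathcal O_A)\rtimes \mathscr G^a$) with the Pierce identification $A\cong \Gamma_c(\wh A, \mathcal O_A)$, rewriting the codomain as $A\rtimes \mathscr G^a$; then $\ov\Phi$ is the map induced by the inverse semigroup homomorphism $\phi\colon S\to \mathscr G^a$, $s\mapsto U_s$. I would then construct the two-sided inverse by sending a basic $\sigma\chi_{(s,D(e))}$ to the class of $a'\delta_s$ produced in the surjectivity step. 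The delicate point is well-definedness across overlapping refinements: if $(s_1,V_1)$ and $(s_2,V_2)$ carry the same germs on a common compact open subset, the definition of the germ topology furnishes $r\in S$ with $r\leq s_1,s_2$ witnessing the germ equivalence on that overlap, and the relations $a'\delta_r = a'\delta_{s_1} = a'\delta_{s_2}$ in $\mathcal N$ reconcile the two expressions. Combined with the decomposition of arbitrary compact open bisections of $\mathscr G$ into such basic pieces, this yields an inverse to $\ov\Phi$ and completes the proof.
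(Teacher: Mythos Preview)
This theorem is not proved in the present paper: it is quoted as \cite[Theorem~9.5]{BenDan} and used as background, so there is no proof here against which to compare your proposal.

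That said, your outline is a reasonable reconstruction of how such a result is typically proved. The definition of $\Phi$, the check that it vanishes on $\mathcal N$, and the surjectivity argument are all correct in substance; in particular your computation that for $r\leq s$ one has $\alpha_{s^*}(1_{rr^*})=1_{s^*rr^*s}=1_{r^*r}$, hence $\wh\alpha_s(\lambda)(1_{rr^*})=0$ whenever $\lambda\notin \wh D_{r^*}$, is exactly the right mechanism forcing $[a]_{\wh\alpha_s(\lambda)}=0$ on $U_s\setminus U_r$.

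The injectivity paragraph is where your argument becomes sketchy. You invoke Theorem~\ref{siri} to rewrite the codomain as $A\rtimes\mathscr G^a$ and then propose to build an inverse by hand; these are two different strategies, and mixing them obscures what is actually being used. If you go the direct-inverse route, the genuine work is showing that the assignment $(s,D(e))\mapsto a'\delta_s$ is independent of the presentation of a compact open bisection as a disjoint union of basic pieces. Your remark that overlapping germs are witnessed by some $r\leq s_1,s_2$ is pointwise correct, but you still need a finiteness or compactness argument to pass from pointwise germ coincidence to a finite partition by such witnesses, and then to check additivity across that partition. None of this is hard, but it is precisely the content of the theorem and should not be left implicit. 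Alternatively, if you commit to the $A\rtimes\mathscr G^a$ reformulation via Theorem~\ref{siri}, the cleaner finish is to observe that $s\mapsto U_s$ is a covering morphism of inverse semigroups in the appropriate sense and cite (or prove) the general fact that such morphisms induce isomorphisms of skew rings; this avoids the ad hoc inverse construction entirely.
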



\section{Induced modules}\label{induced}

In this section, we describe the induction of modules from isotropy skew group rings for groupoid algebras with coefficients in a sheaf, and we show how we can use induction to study the ideals of this algebra. We fix an ample groupoid $\grp$ and a $\grp$-sheaf of rings $\OO$. 

Given $x\in\grp^{(0)}$, we define $L_x=\{\gamma\in\grp\mid \dom(\gamma)=x\}$ and $\LLL_x=\bigoplus_{\gamma\in L_x}\OO_{\ran(\gamma)}$ as a direct sum of abelian groups. For $\gamma\in L_x$, $1_{\gamma}\in\LLL_x$ represents the element that is $1_{\ran(\gamma)}$ at coordinate $\gamma$ and it is $0_{\ran(\eta)}$ at all other coordinates $\eta\in L_x$ with $\eta\neq\gamma$. Since the isotropy group $\grp^x_x$ of $x$ acts on $\OO_x$, we can form the skew group ring \[B_x:=\OO_x\rtimes \grp^x_x\] which we call the \emph{isotropy skew group ring} at $x$.

\begin{Prop}\label{prop:LLL_x.right.free.module}
The abelian group $\LLL_x$ has a structure of free right $B_x$-module where the right action is given by
\begin{equation}\label{eq:LLL_x.right.module}
a1_\gamma \cdot b\delta = a\alpha_{\gamma}(b)1_{\gamma\delta}
\end{equation}
for $\gamma\in L_x$, $a\in\OO_{\ran(\gamma)}$, $b\in\OO_x$ and $\delta\in\grp^x_x$, and extended in the natural way. Moreover, if for each $y\in \mathrm{Orb}(x)$, we choose $\eta_y\in\grp$ such that $\dom(\eta_y)=x$ and $\ran(\eta_y)=y$, then $\{1_{\eta_y}\}_{y\in \mathrm{Orb}(x)}$ is a basis for $\LLL_x$.
\end{Prop}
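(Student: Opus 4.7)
The plan is to treat this in two pieces: first verify that the formula \eqref{eq:LLL_x.right.module}, extended biadditively, defines an honest right $B_x$-module structure on $\LLL_x$; then show that $\{1_{\eta_y}\}_{y\in\mathrm{Orb}(x)}$ is a basis by an explicit change-of-variables computation.

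Since $\grp^x_x$ is a group, the ideal $\mathcal N$ in the skew inverse semigroup ring construction vanishes and $B_x=\bigoplus_{\delta\in\grp^x_x}\OO_x\delta$ with product $(b\delta)(c\epsilon)=b\alpha_\delta(c)\delta\epsilon$. I extend the formula on generators additively on both sides and check the two axioms of a right action. The unit of $B_x$ is $1_x\cdot x$, where $x$ also denotes the identity isotropy arrow, and $\alpha_x=\mathrm{id}_{\OO_x}$, so $(a 1_\gamma)\cdot(1_x\cdot x)=a\alpha_\gamma(1_x)1_\gamma=a 1_\gamma$. For associativity, on the one hand
\[
(a 1_\gamma\cdot b\delta)\cdot c\epsilon = a\alpha_\gamma(b)\alpha_{\gamma\delta}(c)\,1_{\gamma\delta\epsilon},
\]
while on the other
\[
a 1_\gamma\cdot\bigl((b\delta)(c\epsilon)\bigr) = a 1_\gamma\cdot b\alpha_\delta(c)\delta\epsilon = a\alpha_\gamma(b)\alpha_\gamma(\alpha_\delta(c))\,1_{\gamma\delta\epsilon},
\]
and the two sides agree by (SR4) together with the cocycle identity $\alpha_\gamma\alpha_\delta=\alpha_{\gamma\delta}$.

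For the basis claim, given $\gamma\in L_x$, set $y=\ran(\gamma)$ and $\delta=\eta_y^{-1}\gamma$; then $\delta\in \grp^x_x$ and $\eta_y\delta=\gamma$. For any $a\in\OO_y$,
\[
1_{\eta_y}\cdot\bigl(\alpha_{\eta_y^{-1}}(a)\,\delta\bigr) = \alpha_{\eta_y}\bigl(\alpha_{\eta_y^{-1}}(a)\bigr)1_{\eta_y\delta} = a 1_\gamma,
\]
so $\{1_{\eta_y}\}$ spans. For linear independence, suppose $\sum_{y}1_{\eta_y}\cdot\sum_\delta b_{y,\delta}\delta=0$ (finite sum); expanding gives $\sum_{y,\delta}\alpha_{\eta_y}(b_{y,\delta})\,1_{\eta_y\delta}=0$. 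The arrows $\eta_y\delta$ appearing are pairwise distinct: applying $\ran$ separates different $y$'s (since each $\delta$ is isotropy, $\ran(\eta_y\delta)=y$), and for fixed $y$, left-cancellation by $\eta_y$ separates different $\delta$'s. Since $\LLL_x$ is a direct sum indexed by $L_x$, each $\alpha_{\eta_y}(b_{y,\delta})$ vanishes, and because $\alpha_{\eta_y}$ is a ring isomorphism, each $b_{y,\delta}=0$.

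There is no genuine obstacle; the argument is a routine unpacking of definitions. The main things to keep in mind are that $\grp^x_x$ being a group makes $B_x$ a pure skew group ring (no extra relations), and that the cocycle property $\alpha_\gamma\alpha_\delta=\alpha_{\gamma\delta}$ together with each $\alpha_\gamma$ being a ring isomorphism is exactly what both makes the action well-defined and allows the basis elements $1_{\eta_y}$ to reach every generator $a 1_\gamma$ of $\LLL_x$.
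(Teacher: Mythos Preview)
Your proof is correct. The associativity check is identical to the paper's, and your additional verification of the unit axiom is a small improvement in completeness.

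For freeness, you take a more direct route than the paper. The paper constructs an explicit right $B_x$-module isomorphism $\Phi\colon \LLL_x\to \bigoplus_{y\in\mathrm{Orb}(x)} B_x 1_y$, given by $\Phi(a1_\gamma)=\alpha_{\eta_{\ran(\gamma)}^{-1}}(a)\,\eta_{\ran(\gamma)}^{-1}\gamma\,1_{\ran(\gamma)}$, together with an explicit inverse $\Psi$, and then reads off the basis as $\Psi(1_y)=1_{\eta_y}$. You instead verify spanning and linear independence of $\{1_{\eta_y}\}$ directly, using the bijection $\mathrm{Orb}(x)\times\grp^x_x\to L_x$, $(y,\delta)\mapsto \eta_y\delta$, in both directions. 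The underlying combinatorics is the same; your argument is shorter and perfectly adequate for the proposition as stated, while the paper's explicit $\Phi$ has the minor advantage of recording the isomorphism formula (though it is not invoked again later in the paper).
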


\begin{proof}
We check the associativity of the right action, the other properties being straightforward. For that, consider $\gamma\in L_x$, $a\in\OO_{\ran(\gamma)}$, $b,c\in\OO_x$ and $\delta,\eta\in\grp^x_x$. We have that
\[(a1_\gamma\cdot b\delta)\cdot c\eta=a\alpha_\gamma(b)1_{\gamma\delta}\cdot c\eta=a\alpha_\gamma(b)\alpha_{\gamma\delta}(c)1_{\gamma\delta\eta}=\]
\[a\alpha_\gamma(b\alpha_\delta(c))1_{\gamma\delta\eta}=a1_\gamma\cdot b\alpha_{\delta}(c)\delta\eta=a1_\gamma\cdot(b\delta c\eta). \]

To prove the second part of the statement, for each $y\in \mathrm{Orb}(x)$, we fix an element $\eta_y\in\grp$ such that $\dom(\eta_y)=x$ and $\ran(\eta_y)=y$. In order to show that $\LLL_x$ is free we consider the abelian group homomorphism $\Phi:\LLL_x\to\bigoplus_{y\in \mathrm{Orb}(x)}B_x1_y$ given by
\begin{equation}\label{eq:LLL_x.free.module.iso}
    \Phi(a1_\gamma)=\alpha_{\eta_{\ran(\gamma)}\inv}(a)\eta_{\ran(\gamma)}\inv\gamma1_{\ran(\gamma)},
\end{equation}
where $1_y$ is the canonical vector for $y\in \mathrm{Orb}(x)$, and $\bigoplus_{y\in \mathrm{Orb}(x)}B_x1_y$ has the natural right $B_x$-module structure. Observe that $\alpha_{\eta_{\ran(\gamma)}\inv}(a)\in\OO_{\dom(\eta_{\ran(\gamma)})}=\OO_x$ and $\dom(\eta_{\ran(\gamma)}\inv\gamma)=\ran(\eta_{\ran(\gamma)}\inv\gamma)=x$, so that $\Phi$ is indeed well-defined. We now show that $\Phi$ is a right $B_x$-module isomorphism.

Given $\gamma\in L_x$, $a\in\OO_{\ran(\gamma)}$, $b\in\OO_x$ and $\delta\in\grp^x_x$, we have that
\[\Phi(a1_\gamma\cdot b\delta)=\Phi(a\alpha_{\gamma}(b)1_{\gamma\delta})=\alpha_{\eta_{\ran(\gamma\delta)}\inv}(a\alpha_\gamma(b))\eta_{\ran(\gamma\delta)}\inv\gamma\delta1_{\ran(\gamma\delta)}=\]
\[\alpha_{\eta_{\ran(\gamma)}\inv}(a)\alpha_{\eta_{\ran(\gamma)}\inv\gamma}(b)\eta_{\ran(\gamma)}\inv\gamma\delta1_{\ran(\gamma)}=\alpha_{\eta_{\ran(\gamma)}\inv}(a)\eta_{\ran(\gamma)}\inv\gamma b\delta1_{\ran(\gamma)}=\]
\[(\alpha_{\eta_{\ran(\gamma)}\inv}(a)\eta_{\ran(\gamma)}\inv\gamma 1_{\ran(\gamma)})b\delta=\Phi(a1_\gamma)b\delta,\]
so that $\Phi$ is a right $B_x$-module homomorphism.

In order to build the inverse of $\Phi$ we use the following identification as abelian groups, $\bigoplus_{y\in \mathrm{Orb}(x)}B_x1_y=\bigoplus_{y\in \mathrm{Orb}(x)}\bigoplus_{\delta\in\grp^x_x}\OO_x\delta1_y$. We now define the abelian group homomorphism $\Psi:\bigoplus_{y\in \mathrm{Orb}(x)}B_x1_y\to\LLL_x$ by
\[\Psi(b\delta1_y)=\alpha_{\eta_y}(b)1_{\eta_y\delta},\]
where $b\in\OO_x$, $\delta\in\grp^x_x$ and $y\in \mathrm{Orb}(x)$. Here, $\eta_y\delta\in L_x$ and $\alpha_{\eta_y}(b)\in\ran(\eta_y)=\ran(\eta_y\delta)$ so that $\Psi$ is well-defined.

We now prove that $\Psi=\Phi\inv$. Given $\gamma\in L_x$ and $a\in\OO_{\ran(\gamma)}$, we have that
\[\Psi(\Phi(a1_\gamma))=\Psi(\alpha_{\eta_{\ran(\gamma)}\inv}(a)\eta_{\ran(\gamma)}\inv\gamma1_{\ran(\gamma)})=\alpha_{\eta_{\ran(\gamma)}}(\alpha_{\eta_{\ran(\gamma)}\inv}(a))1_{\eta_{\ran(\gamma)}\eta_{\ran(\gamma)}\inv\gamma}=a1_\gamma.\]
On the other hand, given $b\in\OO_x$, $\delta\in\grp^x_x$ and $y\in \mathrm{Orb}(x)$, we have that
\[\begin{array}{ll} \Phi(\Psi(b\delta1_y))& =\Phi(\alpha_{\eta_y}(b)1_{\eta_y\delta})=\alpha_{\eta_{\ran(\eta_y\delta)}\inv}(\alpha_{\eta_y}(b))\eta_{\ran(\eta_y\delta)}\inv\eta_y\delta1_{\ran(\eta_y\delta)} \\ \\ & = \alpha^{-1}_{\eta_{y}}(\alpha_{\eta_y}(b))\eta_{y}\inv\eta_y\delta1_{y}=b\delta1_y.\end{array}\]

It follows that $\Phi$ is a right $B_x$-module isomorphism between $\LLL_x$ and the free module $\bigoplus_{y\in \mathrm{Orb}(x)}B_x1_y$. Moreover, for each $y\in \mathrm{Orb}(x)$, $\Psi(1_y)=1_{\eta_y}$ so that $\{1_{\eta_y}\}_{y\in \mathrm{Orb}(x)}$ is a basis for $\LLL_x$.
\end{proof}

Let $x\in\grp^{(0)}$. From now on we assume that, for each $y\in \mathrm{Orb}(x)$, we have chosen $\eta_y\in\grp$ such that $\eta_x=x$ and, for $y\neq x$, \begin{equation}\label{basis} \dom(\eta_y)=x \text{ and } \ran(\eta_y)=y.
\end{equation}

\begin{Prop}\label{prop:LLL_x.bimodule}
The abelian group $\LLL_x$ has a structure of $\Gamma_c(\mathscr G,\mathcal O)$-left module with left action given by
\begin{equation}\label{eq:LLL_x.left.module}
    f\cdot a1_\gamma=\sum_{\dom(\beta)=\ran(\gamma)}f(\beta)\alpha_{\beta}(a)1_{\beta\gamma},
\end{equation}
where $f\in \Gamma_c(\mathscr G,\mathcal O)$, $\gamma\in L_x$ and $a\in\OO_{\ran(\gamma)}$. Moreover, with the $B_x$ right action given by Equation \eqref{eq:LLL_x.right.module}, we have that $\LLL_x$ is a $\Gamma_c(\mathscr G,\mathcal O)$-$B_x$-bimodule.
\end{Prop}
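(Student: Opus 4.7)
The plan is to verify in turn well-definedness of the proposed left action, additivity, associativity with respect to convolution in $\Gamma_c(\mathscr G,\mathcal O)$, and then the bimodule compatibility.

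First, I would check that the right-hand side of \eqref{eq:LLL_x.left.module} makes sense. The sum is over $\beta\in\ran\inv(\ran(\gamma))$ (using $\dom(\beta)=\ran(\gamma)$ in the product $\beta\gamma$), and by Proposition~\ref{p:finiteness.prop} applied to $f$ at the point $\ran(\gamma)$, only finitely many terms are non-zero, so the sum is a well-defined element of $\LLL_x$. Also, $f(\beta)\in\OO_{\ran(\beta)}=\OO_{\ran(\beta\gamma)}$ and $\alpha_{\beta}(a)\in\OO_{\ran(\beta)}$, so the coefficient of $1_{\beta\gamma}$ lies in the correct stalk. Additivity in $f$ and in $a1_\gamma$ is immediate from the definition.

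The main verification is associativity, namely $(f*g)\cdot a1_\gamma=f\cdot(g\cdot a1_\gamma)$. Expanding the left side using \eqref{eq:define.conv} and axiom (S3), one obtains
\[(f*g)\cdot a1_\gamma=\sum_{\dom(\beta)=\ran(\gamma)}\,\sum_{\mu\nu=\beta}f(\mu)\alpha_\mu(g(\nu))\alpha_{\mu\nu}(a)1_{\mu\nu\gamma}
=\sum_{\mu,\nu:\dom(\nu)=\ran(\gamma),\,\dom(\mu)=\ran(\nu)} f(\mu)\alpha_\mu\bigl(g(\nu)\alpha_\nu(a)\bigr)1_{\mu\nu\gamma}.\]
Expanding the right side:
\[f\cdot(g\cdot a1_\gamma)=\sum_{\dom(\nu)=\ran(\gamma)} f\cdot\bigl(g(\nu)\alpha_\nu(a)1_{\nu\gamma}\bigr)=\sum_{\dom(\nu)=\ran(\gamma)}\,\sum_{\dom(\mu)=\ran(\nu)} f(\mu)\alpha_\mu\bigl(g(\nu)\alpha_\nu(a)\bigr)1_{\mu\nu\gamma}.\]
The two double sums agree term by term after reindexing via $\beta=\mu\nu$; here I use that $\alpha_\mu$ is a ring homomorphism (axiom (SR4)) and the cocycle identity (S3). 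This establishes the left module structure; note that I do not need a global unit, only the ring structure.

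Finally, for the bimodule compatibility, I would compute both sides of $(f\cdot a1_\gamma)\cdot b\delta=f\cdot(a1_\gamma\cdot b\delta)$. Applying \eqref{eq:LLL_x.right.module} to each summand of $f\cdot a1_\gamma$ gives
\[(f\cdot a1_\gamma)\cdot b\delta=\sum_{\dom(\beta)=\ran(\gamma)} f(\beta)\alpha_\beta(a)\alpha_{\beta\gamma}(b)1_{\beta\gamma\delta}=\sum_{\dom(\beta)=\ran(\gamma)} f(\beta)\alpha_\beta\bigl(a\alpha_\gamma(b)\bigr)1_{\beta\gamma\delta},\]
using $\alpha_\beta\alpha_\gamma=\alpha_{\beta\gamma}$ and that $\alpha_\beta$ is a ring homomorphism. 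On the other hand, $a1_\gamma\cdot b\delta=a\alpha_\gamma(b)1_{\gamma\delta}$, and since $\ran(\gamma\delta)=\ran(\gamma)$ (as $\delta\in\grp^x_x$), applying \eqref{eq:LLL_x.left.module} yields the same expression. The hardest bookkeeping step is tracking the stalks and the cocycle identity through the associativity computation, but once one unfolds the convolution carefully, both identities reduce to the sheaf axiom (S3) together with (SR4).
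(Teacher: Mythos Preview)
Your proof is correct and follows essentially the same approach as the paper: both verify well-definedness via Proposition~\ref{p:finiteness.prop}, then check associativity of the left action and the bimodule compatibility by unfolding the sums and invoking axioms (S3) and (SR4). One small slip: in your first paragraph you write $\beta\in\ran\inv(\ran(\gamma))$, but since the condition is $\dom(\beta)=\ran(\gamma)$ this should read $\beta\in\dom\inv(\ran(\gamma))$; your parenthetical and the rest of the argument use the correct condition, so this is just a typo.
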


\begin{proof}
By Proposition \ref{p:finiteness.prop}, the sum in \eqref{eq:LLL_x.left.module} is finite. Also notice that $f(\beta),\alpha_{\beta}(a)\in\OO_{\ran(\beta)}=\OO_{\ran(\beta\gamma)}$, so that the expression in \eqref{eq:LLL_x.left.module} is well-defined. For the left-module properties, we prove the associativity, the other properties being immediate. For $f,g\in \Gamma_c(\mathscr G,\mathcal O)$, $\gamma\in L_x$ and $a\in\OO_{\ran(\gamma)}$, we have that
\begin{align*}
    f\cdot(g\cdot a1_\gamma) & =f\cdot\left(\sum_{\dom(\beta)=\ran(\gamma)}g(\beta)\alpha_{\beta}(a)1_{\beta\gamma}\right) \\
    & =\sum_{\dom(\beta)=\ran(\gamma)}\sum_{\dom(\delta)=\ran(\beta)}f(\delta)\alpha_{\delta}(g(\beta)\alpha_{\beta}(a))1_{\delta\beta\gamma} \\
    & =\sum_{\dom(\beta)=\ran(\gamma)}\sum_{\dom(\delta)=\ran(\beta)}f(\delta)\alpha_{\delta}(g(\beta))\alpha_{\delta\beta}(a)1_{\delta\beta\gamma} \\
    & =\sum_{\dom(\zeta)=\ran(\gamma)}\sum_{\delta \beta=\zeta}f(\delta)\alpha_{\delta}(g(\beta))\alpha_{\delta\beta}(a)1_{\delta\beta\gamma} \\
    & =\sum_{\dom(\zeta)=\ran(\gamma)}(f*g)(\zeta)\alpha_{\zeta}(a)1_{\zeta\gamma}\\
    &=(f*g)\cdot a1_\gamma.
\end{align*}

We now check the associativity with respect to the left and right actions. Fix $f\in \Gamma_c(\mathscr G,\mathcal O)$, $\gamma\in L_x$, $a\in\OO_{\ran(\gamma)}$, $b\in\OO_x$ and $\delta\in\grp^x_x$. Then,
\begin{align*}
    (f\cdot a1_\gamma)\cdot b\delta & = \left(\sum_{\dom(\beta)=\ran(\gamma)}f(\beta)\alpha_{\beta}(a)1_{\beta\gamma}\right)\cdot b\delta \\
    &=\sum_{\dom(\beta)=\ran(\gamma)}f(\beta)\alpha_{\beta}(a)\alpha_{\beta\gamma}(b)1_{\beta\gamma\delta} \\
    &=\sum_{\dom(\beta)=\ran(\gamma)}f(\beta)\alpha_{\beta}(a\alpha_{\gamma}(b))1_{\beta\gamma\delta}\\
    &=f\cdot(a\alpha_{\gamma}(b)1_{\gamma\delta})\\
    &=f\cdot(a1\gamma\cdot b\delta).
\end{align*}
\end{proof}

\begin{Rmk}
A sum such as in \eqref{eq:LLL_x.left.module} can be decomposed using orbits. In general, for $x\in\grp^{(0)}$ the sum $\sum_{\dom(\beta)=x}$ can be decomposed as $\sum_{y\in \mathrm{Orb}(x)}\sum_{\gamma:x\to y}$, where the last sum is over all $\gamma\in\grp$ such that $\dom(\gamma)=x$ and $\ran(\gamma)=y$.
\end{Rmk}

In possession of the bimodule $\LLL_x$, we can define an induction functor  $\Ind_x:\module{B_x}\to\module{\Gamma_c(\mathscr G,\mathcal O)}$ defined by \[\Ind_x(M)=\LLL_x\otimes_{B_x} M.\] The induction functor is exact as $\LLL_x$ is a free right $B_x$-module. We can also use the basis of Proposition~\ref{prop:LLL_x.right.free.module} to decompose $\Ind_x(M)$ as a direct sum. More specifically, for each $y\in \mathrm{Orb}(x)$ let $\eta_y\in\grp$ as in (\ref{basis}). Then, given $M\in \module{B_x}$, we have that
\begin{equation}\label{eq:decomposition.ind.M}
    \Ind_x(M)=\LLL_x\otimes_{B_x} M=\bigoplus_{y\in \mathrm{Orb}(x)}1_{\eta_y}\otimes M.
\end{equation}
We now compute how the $\Gamma_c(\mathscr G,\mathcal O)$ action behaves with respect to the isomorphism decomposition of $\Ind_x(M)$. Given $f\in \Gamma_c(\mathscr G,\mathcal O)$, $y\in \mathrm{Orb}(x)$ and $m\in M$, we have that
\begin{align}\label{eq:Gamma.acts.induced.module}
    f\cdot 1_{\eta_y}\otimes m & = \sum_{z\in \mathrm{Orb}(x)}\sum_{\zeta:y\to z}f(\zeta)1_{\zeta\eta_y}\otimes m \nonumber  \\
    &=\sum_{z\in \mathrm{Orb}(x)}\sum_{\zeta:y\to z}\alpha_{\eta_z\eta_z\inv}(f(\zeta))1_{\eta_z\eta_z\inv\zeta\eta_y}\otimes m \nonumber \\
    &=\sum_{z\in \mathrm{Orb}(x)}\sum_{\zeta:y\to z}1_{\eta_z}\otimes\alpha_{\eta_z\inv}(f(\zeta))\eta_z\inv\zeta\eta_y \cdot m,
\end{align}
where the first equality follows from \eqref{eq:LLL_x.left.module} and the decomposition of the set $\{\beta\in\grp\mid\dom(\beta)=y\}$ as $\bigcup_{z\in \mathrm{Orb}(x)}\{\zeta\in\grp\mid \dom(\zeta)=y,\ran(\zeta)=z\}$.

The following lemma follows immediately from Equation~\eqref{eq:Gamma.acts.induced.module}.

\begin{Lemma}\label{lem:ann.ind.module}
Let $M$ be a $\module{B_x}$, $\Ind_x(M)$ be the induced module, and for each $y\in \mathrm{Orb}(x)$ let $\eta_y\in\grp$ as in (\ref{basis}). Then, for every $f\in \Gamma_c(\mathscr G,\mathcal O)$, we have that $f\in\Ann(\Ind_x(M))$ if, and only if, for every $y,z\in \mathrm{Orb}(x)$, we have that $\sum_{\zeta:y\to z}\alpha_{\eta_z\inv}(f(\zeta))\eta_z\inv\zeta\eta_y\in\Ann(M)$.
\end{Lemma}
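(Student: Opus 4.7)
The plan is to read off the result directly from the explicit formula \eqref{eq:Gamma.acts.induced.module} together with the direct-sum decomposition in \eqref{eq:decomposition.ind.M}. No new input beyond those two equations (plus the freeness part of Proposition~\ref{prop:LLL_x.right.free.module}) is needed.

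First, since $\Ind_x(M)$ is generated as an abelian group by the elements $1_{\eta_y}\otimes m$ with $y\in\mathrm{Orb}(x)$ and $m\in M$, we have $f\in\Ann(\Ind_x(M))$ if and only if $f\cdot(1_{\eta_y}\otimes m)=0$ for every such $y$ and $m$. Substituting into \eqref{eq:Gamma.acts.induced.module} gives
\[
f\cdot(1_{\eta_y}\otimes m)=\sum_{z\in\mathrm{Orb}(x)}1_{\eta_z}\otimes\Biggl(\sum_{\zeta:y\to z}\alpha_{\eta_z\inv}(f(\zeta))\eta_z\inv\zeta\eta_y\Biggr)\cdot m,
\]
and by the direct-sum decomposition $\Ind_x(M)=\bigoplus_{z\in\mathrm{Orb}(x)} 1_{\eta_z}\otimes M$ this vanishes if and only if each $z$-summand vanishes.

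To conclude, I would invoke freeness. Since $\LLL_x$ is a free right $B_x$-module with basis $\{1_{\eta_y}\}_{y\in\mathrm{Orb}(x)}$, the natural map $n\mapsto 1_{\eta_z}\otimes n$ is an isomorphism $M\to 1_{\eta_z}\otimes M$; in particular it is injective. Hence the $z$-summand above is zero if and only if $\Bigl(\sum_{\zeta:y\to z}\alpha_{\eta_z\inv}(f(\zeta))\eta_z\inv\zeta\eta_y\Bigr)\cdot m=0$, and, requiring this for every $m\in M$, if and only if the element $\sum_{\zeta:y\to z}\alpha_{\eta_z\inv}(f(\zeta))\eta_z\inv\zeta\eta_y\in B_x$ lies in $\Ann(M)$. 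Asking this in turn for every pair $(y,z)$ yields exactly the stated condition.

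The argument is essentially bookkeeping, and I foresee no genuine obstacle; the one small point of care is to invoke the direct-sum decomposition first in order to separate the $z$-summands, and only then apply freeness on each summand to reduce the annihilation of $1_{\eta_z}\otimes n$ to the annihilation of $n$ in $M$.
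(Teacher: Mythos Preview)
Your argument is correct and is exactly what the paper has in mind: it states only that the lemma ``follows immediately from Equation~\eqref{eq:Gamma.acts.induced.module},'' and your proof simply unpacks that claim using the decomposition \eqref{eq:decomposition.ind.M} and the freeness from Proposition~\ref{prop:LLL_x.right.free.module}. There is nothing to add.
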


Now let $\mathcal M$ be a $\mathscr G$-sheaf of $\mathcal O$-modules. Our next goal is to describe $\Ann(\Gamma_c(\mathscr G,\mathcal M))$ in terms of the annihilators of the induced modules. First, we have to give a structure of left $B_x$-module to $\mathcal M_x$ for each $x\in\grp^{(0)}$. Given $b\in\OO_x$, $\delta\in\grp^x_x$ and $m\in\mathcal M_x$, we define
\begin{equation}\label{eq:M_x.left.B_x.module}
    b\delta\cdot m:=b \beta_\delta(m).
\end{equation}
As before, we only prove the associativity of the left product. Suppose we are given as well, $c\in\OO_x$ and $\eta\in\grp^x_x$, then
\[b\delta\cdot (c\eta\cdot m)=b\delta\cdot (c\beta_{\eta}(m))=b\beta_{\delta}(c\beta_{\eta}(m))=b\alpha_\delta(c)\beta_{\delta\eta}(m)=\]
\[(b\alpha_\delta(c)\delta\eta)\cdot m=(b\delta c\eta)\cdot m.\]

The following generalizes \cite[Theorem 5]{Ben}. We adapt the proof there to accommodate the extra structure.

\begin{Thm}\label{thm:ann.ideal}
Let $\mathcal M$ be a $\mathscr G$-sheaf of $\mathcal O$-modules. Then,
\[\Ann(\Gamma_c(\mathscr G,\mathcal M))=\bigcap_{x\in\grp^{(0)}} \Ann(\Ind_x(\mathcal M_x)).\]
Consequently, every ideal of $\Gamma_c(\mathscr G,\mathcal O)$ is an intersection of annihilators of induced modules.
\end{Thm}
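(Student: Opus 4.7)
The plan is to prove the equality $\Ann(\Gamma_c(\mathscr G,\mathcal M))=\bigcap_{x\in\mathscr G^{(0)}}\Ann(\Ind_x(\mathcal M_x))$ by showing each inclusion, and then deduce the ideal description at once from Remark~\ref{r:find.ideals}. My first move would be to recast the criterion of Lemma~\ref{lem:ann.ind.module} in a form directly comparable with the action of $\Gamma_c(\mathscr G,\mathcal O)$ on $\Gamma_c(\mathscr G,\mathcal M)$. Unwinding the $B_x$-action \eqref{eq:M_x.left.B_x.module} on $\mathcal M_x$, applying the bijection $\beta_{\eta_z}\colon\mathcal M_x\to\mathcal M_z$ to both sides of the annihilation equation and using (SM3) to absorb the cocycle $\alpha_{\eta_z\inv}$, the membership $f\in\Ann(\Ind_x(\mathcal M_x))$ should become the cleaner statement: for every $y,z\in\mathrm{Orb}(x)$ and every $n\in\mathcal M_y$,
\begin{equation*}
\sum_{\zeta\colon y\to z} f(\zeta)\,\beta_\zeta(n)=0 \quad\text{in } \mathcal M_z,
\end{equation*}
where I use that $n=\beta_{\eta_y}(m)$ ranges over $\mathcal M_y$ as $m$ varies over $\mathcal M_x$.

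With this reformulation in hand, the inclusion $\supseteq$ is short: if $f$ annihilates every $\Ind_x(\mathcal M_x)$ and $s\in\Gamma_c(\mathscr G,\mathcal M)$, decomposing the defining action along the orbit of $z\in\mathscr G^{(0)}$ gives
\begin{equation*}
(f\cdot s)(z)=\sum_{y\in\mathrm{Orb}(z)}\sum_{\zeta\colon y\to z} f(\zeta)\,\beta_\zeta(s(y)),
\end{equation*}
and each inner sum vanishes by the reformulation with $x=z$ and $n=s(y)$, so $fs=0$.

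For the harder inclusion $\subseteq$ the idea is to realize the inner sum above as $(fs)(z)$ for a cleverly chosen $s$. Fix $x$, then $y,z\in\mathrm{Orb}(x)$ and $n\in\mathcal M_y$. By Proposition~\ref{p:finiteness.prop} the set $F=\{\dom(\gamma)\colon\ran(\gamma)=z,\ f(\gamma)\neq0\}$ is finite; since $\mathscr G^{(0)}$ is Hausdorff with a basis of compact open sets I can pick a compact open $U$ with $y\in U$ and $U\cap(F\setminus\{y\})=\emptyset$. Using Proposition~\ref{p:lots.of.secs} to obtain $t\in\Gamma_c(\mathscr G,\mathcal M)$ with $t(y)=n$, the section $s=\chi_U t$ satisfies $s(y)=n$ and $s(y')=0$ for $y'\in F\setminus\{y\}$; the computation of $(fs)(z)$ then collapses to exactly $\sum_{\zeta\colon y\to z}f(\zeta)\beta_\zeta(n)$, which vanishes by hypothesis. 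Invoking the reformulation once more gives $f\in\Ann(\Ind_x(\mathcal M_x))$.

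The consequence for ideals is immediate: given any ideal $I\subseteq\secring$, Remark~\ref{r:find.ideals} writes $I=\Ann(\Gamma_c(\mathscr G,\mathrm{Sh}(\secring/I)))$, and the theorem rewrites this annihilator as an intersection of annihilators of induced modules. I expect the main obstacle to be the very first step: carefully pushing $\alpha_{\eta_z\inv}$ and $\beta_{\eta_z\inv\zeta\eta_y}$ around via (SM3) and the bijectivity of $\beta_{\eta_y}$ to arrive at the clean orbit-indexed identity is where the sheaf-of-modules structure really enters, and getting the bookkeeping right is where the proof earns its keep; once that reformulation is established, both inclusions reduce to essentially routine applications of the finiteness of $\ran$-fibers and of sections through prescribed stalk values.
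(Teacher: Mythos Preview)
Your argument is correct. The overall strategy---prove the two inclusions via Lemma~\ref{lem:ann.ind.module}, Proposition~\ref{p:finiteness.prop}, Proposition~\ref{p:lots.of.secs}, and Hausdorffness of $\grp^{(0)}$---matches the paper's, and your reverse inclusion $\supseteq$ is exactly the paper's computation after your reformulation is unwound.

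The forward inclusion $\subseteq$ is where you diverge. The paper keeps the raw form of Lemma~\ref{lem:ann.ind.module}, picks a section $s$ with $s(x)=m\in\mathcal M_x$, and exploits that $\Ann(\Gamma_c(\mathscr G,\mathcal M))$ is a two-sided ideal: it conjugates $f$ to $\chi_{U_z\inv}*f*\chi_{U_y}$ (for compact open bisections $U_y\ni\eta_y$, $U_z\ni\eta_z$), evaluates $((\chi_{U_z\inv}*f*\chi_{U_y})s)(x)$, and then tracks the bisection bookkeeping to land on the desired sum in $\mathcal M_x$. You instead first push the criterion of Lemma~\ref{lem:ann.ind.module} up to $\mathcal M_z$ via $\beta_{\eta_z}$ and (SM3), obtaining the clean condition $\sum_{\zeta:y\to z}f(\zeta)\beta_\zeta(n)=0$ for all $n\in\mathcal M_y$; this lets you work directly at $z$ with the cutoff section $s=\chi_U t$ and read off $(fs)(z)$ without any bisection conjugation. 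Your route trades the paper's use of the ideal property for a one-time algebraic reformulation, and buys a shorter, more symmetric argument; the paper's route avoids that preliminary manipulation but pays for it with a heavier convolution computation (their Equations~\eqref{eq:ann.1}--\eqref{eq:ann.2}).
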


\begin{proof}
Let $f\in \Ann(\Gamma_c(\mathscr G,\mathcal M))$ and $x\in\grp^{(0)}$. In order to prove that $f\in \Ann(\Ind_x(\mathcal M_x))$, by Lemma \ref{lem:ann.ind.module}, it suffices to show that for every $y,z\in \mathrm{Orb}(x)$, we have that $\sum_{\zeta:y\to z}\alpha_{\eta_z\inv}(f(\zeta))\eta_z\inv\zeta\eta_y\in\Ann(\mathcal M_x)$. Let $m\in\mathcal M_x$. Then, by Proposition~\ref{p:lots.of.secs}, there exists $s\in \Gamma_c(\mathscr G,\mathcal M)$ such that $s(x)=m$. Fix $y,z\in \mathrm{Orb}(x)$. By Proposition \ref{p:finiteness.prop}, $|\ran\inv(z)\cap\supp(f)|<\infty$, and since $\grp^{(0)}$ is Hausdorff, there exists a compact-open neighborhood $U$ of $y$ such that $U\cap \dom(\ran\inv(z)\cap\supp(f))\scj\{y\}$. Now, let $U_y$ and $U_z$ be compact-open bisections such that $\eta_y\in U_y$ and $\eta_z\in U_z$. By replacing $U_y$ with $UU_y$, if necessary, we may assume that $\ran(U_y)\scj U$. Since $f\in \Ann(\Gamma_c(\mathscr G,\mathcal M))$ and the annihilator is an ideal, we have that $\chi_{U_z\inv}*f*\chi_{U_y}\in \Ann(\Gamma_c(\mathscr G,\mathcal M))$, and therefore
\begin{equation}\label{eq:ann.1}
    0=((\chi_{U_z\inv}*f*\chi_{U_y})s)(x)=\sum_{\gamma\in\ran\inv(x)}(\chi_{U_z\inv}*f*\chi_{U_y})(\gamma)\beta_{\gamma}(s(\dom(\gamma))).
\end{equation}
Notice that, for each $\gamma\in\ran\inv(x)$
\[(\chi_{U_z\inv}*f*\chi_{U_y})(\gamma)=\sum_{\gamma_1\gamma_2\gamma_3=\gamma}\chi_{U_z\inv}(\gamma_1)\alpha_{\gamma_1}(f(\gamma_2))\alpha_{\gamma_1\gamma_2}(\chi_{U_y}(\gamma_3)).\]
We are interested in the non-zero terms of the above sum. For that, it is necessary that $\gamma_1=\eta_z\inv$, since $\eta_z\inv$ is the sole element of $U_z\inv$ such that the range is $x$. Also, by the properties of $U$ and $U_y$, it is necessary that $\dom(\gamma_2)=y$ and hence $\gamma_3=\eta_y$. It follows that
\begin{equation}\label{eq:ann.2}
    (\chi_{U_z\inv}*f*\chi_{U_y})(\gamma)=\sum_{\substack{\zeta:y\to z \\ \gamma=\eta_z\inv\zeta\eta_y}}\alpha_{\eta_z\inv}(f(\zeta)).
\end{equation}
By substituting \eqref{eq:ann.2} in \eqref{eq:ann.1} and making the appropriate change of variables, we get
\begin{align*}
    0&=\sum_{\zeta:y\to z}\alpha_{\eta_z\inv}(f(\zeta))\beta_{\eta_z\inv\zeta\eta_y}(s(x)) \\
    &=\sum_{\zeta:y\to z}\alpha_{\eta_z\inv}(f(\zeta))\eta_z\inv\zeta\eta_y\cdot m.
\end{align*}
Since $m$ was arbitrary, $\sum_{\zeta:y\to z}\alpha_{\eta_z\inv}(f(\zeta))\eta_z\inv\zeta\eta_y\in\Ann(\mathcal M_x)$ as we wanted to prove.

Suppose now that $f\in\Ann(\Ind_x(\mathcal M_x))$ for every $x\in\grp^{(0)}$. Given $s\in\secmod$, we want to prove that $fs=0$. For $x\in\grp^{(0)}$, by Lemma~\ref{lem:ann.ind.module}, if $y\in \mathrm{Orb}(x)$  and $\eta_y\in\grp$ is as in (\ref{basis}) then, we have that
\begin{align*}
    0 & =\sum_{\gamma:x\to y}\alpha_{\eta_y\inv}(f(\gamma))\eta_y\inv\gamma\cdot s(x) \\
    &=\sum_{\gamma:x\to y}\alpha_{\eta_y\inv}(f(\gamma))\beta_{\eta_y\inv\gamma}(s(x)) \\
    &=\beta_{\eta_y\inv}\left(\sum_{\gamma:x\to y} f(\gamma)\beta_{\gamma}(s(x))\right),
\end{align*}
and since $\beta_{\eta_y\inv}$ is injective, we conclude that $\sum_{\gamma:x\to y} f(\gamma)\beta_{\gamma}(s(x))=0$. This implies that if we fix $y\in\grp^{(0)}$,
\[fs(y)=\sum_{x\in \mathrm{Orb}(y)}\sum_{\gamma:x\to y}f(\gamma)\beta_{\gamma}(s(x))=0,\]
and hence $f\in\Ann(\secmod)$.

The last part follows from Theorem \ref{t:disint} and Remark~ \ref{r:find.ideals}.
\end{proof}

\section{Applications}\label{s:applications}

\subsection{Primitive ideals} In this section, we prove that every primitive ideal of $\secring$ is the annihilator of a single induced representation. Recall that an ideal is \emph{(left) primitive} if it is the annihilator of a simple (left) module.

\begin{Thm}\label{thm:primitive.ideal}
Let $\mathscr G$ be an ample groupoid and $\mathcal O$ a $\mathscr G$-sheaf of rings. If $I$ is a primitive ideal of $\secring$, then there exists $x\in\grp^{(0)}$ and $M$ a left $B_x$-module such that $I=\Ann(\Ind_x(M))$.
\end{Thm}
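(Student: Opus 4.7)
Write $I = \Ann(V)$ for some simple left $\secring$-module $V$. By the Disintegration Theorem \ref{t:disint}, $V \cong \Gamma_c(\mathscr G, \mathcal M)$ with $\mathcal M = \mathrm{Sh}(V)$, so Theorem \ref{thm:ann.ideal} immediately gives
\[
I = \bigcap_{x \in \grp^{(0)}} \Ann(\Ind_x(\mathcal M_x)).
\]
The plan is to locate a single $x$ for which $I = \Ann(\Ind_x(\mathcal M_x))$; the inclusion $I \subseteq \Ann(\Ind_x(\mathcal M_x))$ is free from the display, so the task reduces to the reverse inclusion for one well-chosen $x$.

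Since $V$ is nonzero, disintegration forces some stalk $\mathcal M_x$ to be nonzero; fix such an $x$. I would construct a nonzero $\secring$-linear map $\Phi\colon \Ind_x(\mathcal M_x) \to V$. Simplicity of $V$ then promotes $\Phi$ to a surjection, yielding $\Ann(\Ind_x(\mathcal M_x)) \subseteq \Ann(V) = I$, as desired.

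The construction of $\Phi$ is where the work lies. Given $m \in \mathcal M_x$, I would lift it via Proposition \ref{p:lots.of.secs} to $\tilde m \in V$ with $\tilde m(x) = m$ and compact support. For a basis element $1_{\eta_y} \otimes m$ of $\Ind_x(\mathcal M_x)$ (using Proposition \ref{prop:LLL_x.right.free.module}), pick a compact open bisection $U \ni \eta_y$ small enough --- using Hausdorffness of $\grp^{(0)}$ together with the finiteness statement in Proposition \ref{p:finiteness.prop} --- so that no arrow of $U \setminus \{\eta_y\}$ has domain in the support of $\tilde m$. Define
\[
\Phi(1_{\eta_y} \otimes m) := \chi_U \cdot \tilde m
\]
and extend linearly. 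Evaluated at $y=x$ (where $\eta_x = x$), this produces a section whose stalk at $x$ is $m \neq 0$, so $\Phi \neq 0$.

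The main obstacle is verifying (a) that $\Phi$ is independent of the choices of $\tilde m$ and $U$ --- two choices of $\tilde m$ differ by an element of $N_x$, which is annihilated by $\chi_W$ for $W$ a sufficiently small neighbourhood of $x$, so shrinking $U$ so that $\dom(U) \subseteq W$ makes both definitions agree; (b) that $\Phi$ is $B_x$-balanced, and so descends to $\LLL_x \otimes_{B_x} \mathcal M_x$ --- a direct computation comparing \eqref{eq:LLL_x.right.module} with \eqref{eq:M_x.left.B_x.module}; and (c) that $\Phi$ intertwines the left $\secring$-action, which comes down to matching the convolution formula \eqref{eq:define.conv} applied to $\chi_U \tilde m$ against the formula \eqref{eq:LLL_x.left.module} for the action on $\LLL_x$. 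I expect (c) to be the trickiest step: one must carefully argue, again exploiting Proposition \ref{p:finiteness.prop} and shrinking $U$ as needed, that the convolution $f \ast (\chi_U \tilde m)$ faithfully records the abstract left action of $f$ on $1_{\eta_y} \otimes m$ in the tensor product.
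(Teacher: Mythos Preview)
Your strategy --- produce a nonzero $\secring$-linear map $\Phi\colon \Ind_x(\mathcal M_x)\to V$ and invoke simplicity of $V$ to get surjectivity --- is appealing, but the construction you sketch for $\Phi$ is not well defined, and the problem is already at step~(a), not~(c). The element $\chi_U\cdot\tilde m\in V$ depends essentially on the bisection $U$: replacing $U$ by a strictly smaller $U'\ni\eta_y$ changes the value on $\ran(U)\setminus\ran(U')$. Thus your remedy for the $\tilde m$-dependence --- shrink $U$ until $\dom(U)$ sits inside a neighbourhood $W$ on which the two lifts agree --- alters the very output of $\Phi$, and the argument becomes circular: there is no canonical $U$ to anchor the definition. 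Relatedly, the smallness condition ``no arrow of $U\setminus\{\eta_y\}$ has domain in $\supp(\tilde m)$'' amounts to $\dom(U)\cap\supp(\tilde m)\subseteq\{x\}$, which typically fails whenever $x$ is not isolated and the zero section of $\mathcal M$ is closed; Proposition~\ref{p:finiteness.prop} constrains fibres of elements of $\secring$, not open bisections, and does not help here. A nonzero map $\Ind_x(\mathcal M_x)\to V$ does exist abstractly --- induction is left adjoint to a suitable stalk/restriction functor --- but you would then have to set up and prove that adjunction, which is more work than the direct route.

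The paper's proof avoids $\Phi$ entirely and argues by contradiction. Set $J_x=\Ann(\Ind_x(\mathcal M_x))$ and suppose $I\subsetneq J_x$. Then $J_x$ does not annihilate the simple module $V=\secmod$, so $J_xs\neq0$ for some $s\in V$, and simplicity forces $J_xs=V$. Pick $0\neq m\in\mathcal M_x$, lift it via Proposition~\ref{p:lots.of.secs} to $t\in V$ with $t(x)=m$, and write $t=fs$ with $f\in J_x$. Evaluating at $x$ gives
\[
m=(fs)(x)=\sum_{y\in\mathrm{Orb}(x)}\sum_{\gamma:y\to x} f(\gamma)\beta_\gamma(s(y)),
\]
and Lemma~\ref{lem:ann.ind.module} (with $\eta_x=x$, applied to $\beta_{\eta_y^{-1}}(s(y))\in\mathcal M_x$) shows that each inner sum vanishes. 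Hence $m=0$, a contradiction. No module map needs to be built, and no choices of $U$ or lifts intrude.
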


\begin{proof}
By Theorem~\ref{thm:ann.ideal} and Remark~\ref{r:find.ideals}, there exists $\mathcal M$ a $\mathscr G$-sheaf of $\mathcal O$-modules such that $I=\Ann(\Gamma_c(\mathscr G,\mathcal M))=\bigcap_{y\in\grp^{(0)}} \Ann(\Ind_y(\mathcal M_y))$ and $\secmod$ is a simple $\secring$-module. The latter implies that there exists $x\in\grp^{(0)}$ such that $\mathcal M_x\neq 0$ . We take $M=\mathcal M_x$ with the left $B_x$-module structure given by \eqref{eq:M_x.left.B_x.module}. Clearly $I\scj \Ann(\Ind_x(M))$.


Suppose now that $I\neq\Ann(\Ind_x(M))=:J_x$. Then there exists $s\in\secmod$ such that $J_xs\neq 0$. Since $\secmod$ is simple and $J_xs$ is a non-trivial submodule, we have that $J_xs=\secmod$. Consider now $m\in M\setminus\{0\}$ and let $t\in\secmod$ be such that $t(x)=m$ (such $t$ exists by Proposition~\ref{p:lots.of.secs}). Then there exists $f\in J_x$ such that $t=fs$, and therefore,
\[m=t(x)=fs(x)=\sum_{y\in \mathrm{Orb}(x)}\sum_{\gamma:y\to x} f(\gamma)\beta_{\gamma}(s(y)).\]
On the other hand, since $f\in J_x$ and for any $y\in \mathrm{Orb}(x)$ we have $\beta_{\eta_y^{ -1}}(s(y))\in \mathcal M _x$, using \eqref{eq:M_x.left.B_x.module} and Lemma \ref{lem:ann.ind.module} (and recalling that $\eta_x=x$), we obtain that
\[0=\sum_{\gamma:y\to x}f(\gamma)\gamma\eta_y\cdot\beta_{\eta_y^{ -1}}(s(y))=\sum_{\gamma:y\to x} f(\gamma)\beta_{\gamma}(s(y)).\]
This way,
\[0\neq m=\sum_{y\in \mathrm{Orb}(x)} 0=0,\]
which is a contradiction.
\end{proof}

\subsection{Simple modules} 
We now prove that the induction of modules preserves simplicity.

\begin{Thm}\label{thm:simple.module}
Let $\mathscr G$ be an ample groupoid, $\mathcal O$ a $\mathscr G$-sheaf of rings and $x\in\grp^{(0)}$. If $M$ is a simple $B_x$-module, then $\Ind_x(M)$ is a simple $\secring$-module.
\end{Thm}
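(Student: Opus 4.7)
The plan is to take an arbitrary nonzero $v\in\Ind_x(M)$ and show that $\secring\cdot v=\Ind_x(M)$; combined with the fact that $\Ind_x(M)\neq 0$ (which follows from the freeness of $\LLL_x$ as a right $B_x$-module established in Proposition~\ref{prop:LLL_x.right.free.module} together with $M\neq 0$), this gives the desired simplicity. Using decomposition \eqref{eq:decomposition.ind.M}, I will write $v=\sum_{y\in F}1_{\eta_y}\otimes m_y$ for a finite set $F\subseteq\mathrm{Orb}(x)$ and nonzero $m_y\in M$, and then proceed in three reduction steps, each exploiting formula \eqref{eq:Gamma.acts.induced.module}.

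First I would reduce $v$ to a single orbit component. Fix $y_0\in F$. Using the Hausdorffness of $\grp^{(0)}$, choose a compact open neighborhood $V$ of $y_0$ disjoint from the finite set $F\setminus\{y_0\}$. Since compact open bisections form a basis of $\grp$, pick a compact open bisection $U\subseteq \dom\inv(V)$ containing $\eta_{y_0}\inv$. Applying $\chi_U$ via \eqref{eq:Gamma.acts.induced.module}, the inner sum over $\zeta:y\to z$ with $\zeta\in U$ is empty for every $y\in F\setminus\{y_0\}$, while for $y=y_0$ the unique surviving $\zeta$ is $\eta_{y_0}\inv$ (since $U$ is a bisection). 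Using $\eta_x=x$ and that $\alpha_x$ is the identity, a direct computation yields $\chi_U\cdot v=1_x\otimes m_{y_0}$.

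Second, I would show $\secring\cdot(1_x\otimes m_{y_0})\supseteq 1_x\otimes M$ by realizing the $B_x$-action through $\secring$. Given $b\delta\in B_x$ with $b\in\OO_x$ and $\delta\in\grp^x_x$, pick a compact open bisection $U$ containing $\delta$ and, using Proposition~\ref{p:lots.of.secs} applied to $\OO$ viewed as a sheaf of abelian groups on $\grp^{(0)}$, a compactly supported continuous section $s$ of $\OO$ with $s(x)=b$. Formula \eqref{eq:Gamma.acts.induced.module} applied to $f=s\chi_U$ then gives $f\cdot(1_x\otimes m)=1_x\otimes f(\delta)\delta\cdot m=1_x\otimes b\delta\cdot m$, since $\delta$ is the unique element of $U$ with source $x$. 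Because $M$ is simple, $B_x\cdot m_{y_0}=M$, giving the desired inclusion.

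Finally, to spread across orbit components, for each $z\in\mathrm{Orb}(x)$ I would choose a compact open bisection $U$ containing $\eta_z$ and use the same mechanism to compute $\chi_U\cdot(1_x\otimes m)=1_{\eta_z}\otimes m$. Combining the three steps with the decomposition \eqref{eq:decomposition.ind.M} gives $\secring\cdot v=\Ind_x(M)$, as required. No step is particularly subtle; the whole argument is careful bookkeeping with \eqref{eq:Gamma.acts.induced.module} and the basis $\{1_{\eta_y}\}_{y\in\mathrm{Orb}(x)}$. The essential ingredients are the Hausdorffness of $\grp^{(0)}$ in Step~1 (to separate the finitely many relevant orbit points, where $\grp$ itself may fail to be Hausdorff), Proposition~\ref{p:lots.of.secs} in Step~2, and the simplicity of $M$.
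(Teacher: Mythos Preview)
Your proof is correct and follows essentially the same approach as the paper. The only organizational difference is that the paper, after reducing to a single tensor $1_{\eta_y}\otimes m$ via $\chi_U$ with $U\subseteq\grp^{(0)}$, constructs one element $f=\sum_i\chi_{U_z}\ast s_i\chi_{U_i}\ast\chi_{U_y^{-1}}$ and applies it at once, whereas you apply the three factors $\chi_{U}$ (with $\eta_{y_0}^{-1}\in U$), $s\chi_{U}$ (with $\delta\in U$), and $\chi_{U}$ (with $\eta_z\in U$) in separate steps; these are the same computation decomposed differently.
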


\begin{proof}
For $w\in\Ind_x(M)\setminus\{0\}$, we have to prove that $\secring\cdot w=\Ind_x(M)$. Due to the decomposition given in \eqref{eq:decomposition.ind.M}, we can write $w=\sum_{i=1}^k 1_{\eta_{y_i}}\otimes m_i$, where $y_i\in \mathrm{Orb}(x)$ for all $i$, $y_i\neq y_j$ if $i\neq j$, and $m_i\in M\setminus\{0\}$ for all $i$. Since $\grp^{(0)}$ is Hausdorff, we can find $U\scj\grp^{(0)}$ open set containing only $y_1$ among the $y_i$. Using Equation \eqref{eq:Gamma.acts.induced.module}, we can check that $\chi_U\cdot w=1_{\eta_{y_1}}\otimes m_1$. Then, we may assume without loss of generality that $w=1_{\eta_y}\otimes m$ for some $y\in \mathrm{Orb}(x)$ and some $m\in M\setminus\{0\}$. Also, it is sufficient to prove that given $z\in \mathrm{Orb}(x)$ and $m'\in M$, there exists $f\in\secring$ such that $f\cdot w=1_{\eta_z}\otimes m'$.

We now fix $z\in \mathrm{Orb}(x)$ and $m'\in M$. Since $M$ is simple, there exists $a\in B_x$ such that $a\cdot m=m'$. We can write $a=\sum_{i=1}^k a_i\delta_i$, where $a_i\in\OO_x$ and $\delta_i\in\grp^x_x$ for all $i$. For each $i=1,\ldots,k$, we take $U_i$ a compact-open bisection containing $\delta_i$ and $s_i\in\secring$ such that $s_i(x)=a_i$, see Proposition~\ref{p:lots.of.secs}. We also take compact-open bisections $U_y$ and $U_z$ such that $\eta_y\in U_y$ and $\eta_z\in U_z$ respectively. Define $f=\sum_{i=1}^k\chi_{U_z}*s_i\chi_{U_i}*\chi_{U_y^{-1}}$. Arguing similarly to what was done in the proof or Theorem \ref{thm:ann.ideal} to prove Equation \eqref{eq:ann.2}, starting from \eqref{eq:Gamma.acts.induced.module}, we see that
\begin{align*}
    f\cdot 1_{\eta_y}\otimes m & =\sum_{v\in \mathrm{Orb}(x)}\sum_{\zeta:y\to v}1_{\eta_v}\otimes\alpha_{\eta_v\inv}(f(\zeta))\eta_v\inv\zeta\eta_y \cdot m  \\
    &=1_{\eta_z}\otimes\sum_{i=1}^k\sum_{\substack{\zeta:y\to z \\ \zeta=\eta_z\delta_i\eta_y\inv}}\alpha_{\eta_z\inv}(\chi_{U_z}*s_i\chi_{U_i}*\chi_{U_y^{-1}}(\zeta))\eta_z\inv\zeta\eta_y\cdot m  \\
    &=1_{\eta_z}\otimes\sum_{i=1}^k \alpha_{\eta_z\inv}(\alpha_{\eta_z}(s_i(\ran(\delta_i))))\delta_i\cdot m \\
    &=1_{\eta_z}\otimes\sum_{i=1}^k a_i\delta_i\cdot m \\
    &=1_{\eta_z}\otimes a\cdot m\\
    &=1_{\eta_z}\otimes m'.
\end{align*}
\end{proof}

We now show that the module $M$ in Theorem~\ref{thm:primitive.ideal} above can be chosen to be simple under some strong hypotheses on the rings $B_x$.  Let $J(S)$ denote the Jacobson radical of a ring $S$. A ring $S$ is called a \emph{left max ring} if each non-zero left $S$-module has a maximal (proper) submodule.  For example, any Artinian ring  $S$ is a left max ring.  Indeed, if $M\neq 0$, then $J(S)M\neq M$ by nilpotency of the Jacobson radical.  But $M/J(S)M$ is then a non-zero $S/J(S)$-module and every non-zero module over a semisimple ring is a direct sum of simple modules and hence has a simple quotient.  Thus $M$ has a maximal proper submodule.  A result of Hamsher~\cite{Hamsher} says that if $S$ is commutative, then $S$ is a left max ring if and only if $J(S)$ is $T$-nilpotent  (e.g., if $J(S)$ is nilpotent) and $S/J(S)$ is von Neumann regular ring. We now prove an analog of the Effros-Hahn conjecture for groupoid algebras with coefficients in a sheaf if all the isotropy skew group rings are left max rings.

\begin{Thm}\label{t:left.max}
Let $\mathscr G$ an ample groupoid and $\mathcal O$ a $\mathscr G$-sheaf of rings, with the isotropy skew group rings $B_x$ being left max rings for all $x\in \mathscr G^{(0)}$.  Then the primitive ideals of $\secring$ are exactly the ideals of the form $\Ann(\Ind_x(M))$, where $M$ is a simple $B_x$-module.
\end{Thm}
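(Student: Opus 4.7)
The plan is to establish both inclusions of the stated equality. One direction is immediate from Theorem~\ref{thm:simple.module}: if $M$ is a simple $B_x$-module, then $\Ind_x(M)$ is a simple $\secring$-module, and hence $\Ann(\Ind_x(M))$ is primitive by definition.

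For the converse, suppose $I$ is a primitive ideal of $\secring$, write $I = \Ann(V)$ with $V$ simple, and set $\mathcal V = \mathrm{Sh}(V)$. I would first apply Theorem~\ref{thm:primitive.ideal} to produce $x \in \grp^{(0)}$ with $M_0 := \mathcal V_x \neq 0$ and $I = \Ann(\Ind_x(M_0))$. The key structural lemma I would then establish is that every $\secring$-submodule $W$ of $\Ind_x(M_0)$ has the form $\Ind_x(N)$ for some $B_x$-submodule $N \subseteq M_0$. Setting $N := \{m \in M_0 : 1_{\eta_x} \otimes m \in W\}$, one verifies that $N$ is $B_x$-invariant using the action of $\secring$-elements supported on the isotropy $\grp_x^x$, and then checks $W = \Ind_x(N)$ by using $\chi_U$ for compact open $U \subseteq \grp^{(0)}$ to isolate individual summands of the decomposition $\Ind_x(M_0) = \bigoplus_{y \in \mathrm{Orb}(x)} 1_{\eta_y} \otimes M_0$ together with $\chi_{U_y}$ and $\chi_{U_y^{-1}}$ for compact open bisections $U_y \ni \eta_y$ to translate between the $y$- and $x$-summands. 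As a consequence, the simple $\secring$-quotients of $\Ind_x(M_0)$ correspond bijectively with the simple $B_x$-quotients of $M_0$.

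To complete the proof, it suffices to realize $V$ as a simple $\secring$-quotient of $\Ind_x(M_0)$: combined with the structural lemma, this forces $V \cong \Ind_x(M)$ where $M = M_0/N$ is simple for some maximal $B_x$-submodule $N$ of $M_0$ (existence of which is provided by the left max hypothesis on $B_x$), so that $I = \Ann(V) = \Ann(\Ind_x(M))$. The expected route is via a Frobenius-type adjunction $\Hom_{\secring}(\Ind_x(M'), V) \cong \Hom_{B_x}(M', \mathcal V_x)$; applied with $M' = M_0$ and the identity map on $M_0$, this would produce a non-zero, hence surjective, $\secring$-map $\Ind_x(M_0) \twoheadrightarrow V$, whose kernel is automatically $\Ind_x(N')$ for some maximal $N' \subsetneq M_0$.

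The main obstacle I expect to face is the construction of this surjection, or equivalently the verification of the adjunction. This is delicate because elements of $V \cong \Gamma_c(\grp, \mathcal V)$ are continuous compactly supported sections, while $\Ind_x(M_0)$ is a purely algebraic direct sum indexed by $\mathrm{Orb}(x)$; bridging the two requires Proposition~\ref{p:lots.of.secs} to lift germs at $x$ to sections in $V$ and careful use of the compact open bisection structure to show that the resulting assignment descends to a well-defined $\secring$-module homomorphism out of $\Ind_x(M_0)$.
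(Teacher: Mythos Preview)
Your structural lemma is correct and its proof sketch is fine: isolating summands with $\chi_U$ and shuttling between them with $\chi_{U_y}$, $\chi_{U_y^{-1}}$ does show that every $\secring$-submodule of $\Ind_x(M_0)$ is induced. But the step you flag as the main obstacle is a genuine gap, and the adjunction you propose is not the right tool.

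The tensor--Hom adjunction gives $\Hom_{\secring}(\Ind_x(M'),V)\cong \Hom_{B_x}(M',\Hom_{\secring}(\LLL_x,V))$, so the right adjoint to $\Ind_x$ is $\Hom_{\secring}(\LLL_x,-)$, not the stalk functor $V\mapsto \mathcal V_x$. These do not agree: take $\grp=\grp^{(0)}=X$ a Cantor space with the constant sheaf $\Delta(\mathbb C)$ and $V=\secringunit$. Then $\LLL_x$ is cyclic with $\Ann_{\secring}(1_x)=\{f:f(x)=0\}$, and one checks $\{v\in V:\Ann(1_x)v=0\}=0$ because any nonzero locally constant function is supported on a clopen set meeting $X\setminus\{x\}$; yet $\mathcal V_x\cong\mathbb C$. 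So there is no reason to expect a nonzero map $\Ind_x(\mathcal V_x)\to V$ in general, and in fact your plan would prove the stronger statement that every simple $\secring$-module is \emph{isomorphic} to some $\Ind_x(M)$ with $M$ simple. The theorem only asserts equality of annihilators, and two non-isomorphic simple modules can share a primitive annihilator.

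The paper's argument sidesteps this entirely. After obtaining $I=\Ann(\Ind_x(\mathcal M_x))$ as you do, it picks \emph{any} maximal $B_x$-submodule $N\subset\mathcal M_x$ (this is where left max is used), sets $J=\Ann(\Ind_x(\mathcal M_x/N))$, and shows $J=I$ directly. The inclusion $I\subseteq J$ is immediate from Lemma~\ref{lem:ann.ind.module}. For $J\subseteq I$ one argues by contradiction: if $J$ did not annihilate the simple module $\Gamma_c(\grp,\mathcal M)$, then $Js=\Gamma_c(\grp,\mathcal M)$ for some section $s$, so any $m\in\mathcal M_x\setminus N$ arises as $m=(fs)(x)$ with $f\in J$; expanding this and applying the criterion of Lemma~\ref{lem:ann.ind.module} (with $z=x$, $\eta_x=x$) to $f$ forces each inner sum $\sum_{\gamma:v\to x}f(\gamma)\beta_\gamma(s(v))$ to lie in $N$, hence $m\in N$, a contradiction. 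This is the same mechanism as in Theorem~\ref{thm:primitive.ideal}, sharpened by choosing $m\notin N$ rather than $m\neq 0$. No identification of $V$ with an induced module, no adjunction, and no structural lemma are needed.
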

\begin{proof}
By Theorem~\ref{thm:simple.module} it suffices to show that any primitive ideal $I$ is of the form $\Ann(\Ind_x(M))$ where $M$ is a simple $B_x$-module.
By Theorem~\ref{t:disint} we may assume that our simple module with annihilator $I$ is of the form $\Gamma_c(\grp,\mathcal M)$ for some $\grp$-sheaf $\mathcal M$ of $\mathcal O$-modules. Let $x\in \mathscr G^0$ with $\mathcal M_x\neq 0$.  We already know from the proof of Theorem~\ref{thm:primitive.ideal} that $I = \Ann(\Ind_x(\mathcal M_x))$.  Let $N$ be a maximal $B_x$-submodule of $\mathcal M_x$ (which exists by assumption on $B_x$) and let $J=\Ann(\Ind_x(\mathcal M_x/N))$.  Since $\mathcal M_x/N$ is simple, it suffices to show that $J=I$.  Clearly, $I\subseteq J$ (by Lemma~\ref{lem:ann.ind.module}) since $\Ann(\mathcal M_u)\subseteq \Ann(\mathcal M_u/N)$.  So it suffices to show that $J$ annihilates $\Gamma_c(\grp,\mathcal M)$.

Suppose that this is not the case.  Then there exists $s\in \Gamma_c(\grp,\mathcal M)$ with $Js\neq 0$.  Since $J$ is an ideal and $\Gamma_c(\grp,\mathcal M)$ is simple, we deduce $Js=\Gamma_c(\grp,\mathcal M)$.   Let $m\in \mathcal M_x\setminus N$ (using that $N$ is a proper submodule) and let $t\in \Gamma_c(\grp,\mathcal M)$ with $t(x)=m$.  Then $t=fs$ with $f\in J$.  Let us compute
\begin{equation}\label{eq:big.helper.2}
m=t(x)=(fs)(x) = \sum_{v\in \mathcal O_x}\sum_{\gamma\colon v\to x}f(\gamma)\beta_{\gamma}(s(v)).
\end{equation}

Let us fix $v\in \mathcal O_x$, fix $\gamma_v\colon x\to v$ and set $\gamma_x=x$.  Then by the assumption $f\in J$ and Lemma~\ref{lem:ann.ind.module}, we have (since $\beta_{\gamma_v\inv}( s(v)\in \mathcal M_x$) that 
\[\sum_{\gamma\colon v\to x}f(\gamma)\beta_\gamma s(v)=\sum_{\gamma\colon v\to x} f(\gamma)\beta_{\gamma\gamma_v}(\beta_{\gamma_v\inv} s(v))\in N.\]  We deduce from \eqref{eq:big.helper.2} that $m\in N$, which is a contradiction. It follows that $J$ annihilates $\Gamma_c(\grp,\mathcal M)$ and so $I=J$.
\end{proof}

A result of Park shows that if $R$ is a unital ring and $G$ is a group acting on $R$ by automorphisms, then the skew group ring $R\rtimes G$ is Artinian if and only if $R$ is Artinian and $G$ is finite, see \cite{Park, NYSTEDT2018433}.  This leads to the following corollary to Theorem~\ref{t:left.max}.

\begin{Cor}\label{c:main}
Let $\grp$ be an ample groupoid with finite isotropy groups and let $\mathcal O$ be a $\grp$-sheaf of Artinian rings.     Then the primitive ideals of $\secring$ are precisely the annihilators of modules induced from simple modules of isotropy skew group rings.
\end{Cor}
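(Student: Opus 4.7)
The plan is to verify that the hypotheses of Theorem~\ref{t:left.max} are satisfied under the assumptions of the corollary, and then simply invoke that theorem. The bulk of the work is already done: Theorem~\ref{t:left.max} characterizes the primitive ideals of $\secring$ as precisely the annihilators of modules induced from simple $B_x$-modules, provided every isotropy skew group ring $B_x = \mathcal O_x \rtimes \grp^x_x$ is a left max ring. So the task reduces to showing that, under the hypotheses of the corollary, each $B_x$ is a left max ring.

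First, I would fix $x \in \grp^{(0)}$ and note two immediate consequences of the hypotheses: the stalk $\mathcal O_x$ is Artinian (since $\mathcal O$ is a $\grp$-sheaf of Artinian rings), and the isotropy group $\grp^x_x$ is finite. By the result of Park cited just before the corollary (see \cite{Park, NYSTEDT2018433}), the skew group ring of an Artinian ring by a finite group acting by automorphisms is again Artinian; applying this with $R = \mathcal O_x$ and $G = \grp^x_x$ yields that $B_x$ is Artinian.

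Next, I would invoke the standard fact, recorded explicitly in the discussion preceding Theorem~\ref{t:left.max}, that every Artinian ring is a left max ring: if $M$ is a non-zero $B_x$-module, then $J(B_x)M \neq M$ by nilpotency of the Jacobson radical, and the non-zero semisimple module $M/J(B_x)M$ admits a simple quotient, which pulls back to a maximal proper submodule of $M$. Hence every $B_x$ is a left max ring.

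With these two reductions in hand, Theorem~\ref{t:left.max} applies directly and delivers the conclusion. There is no real obstacle here; the corollary is essentially a packaging of Theorem~\ref{t:left.max} with Park's theorem on Artinian skew group rings, and the only thing to check carefully is that the hypothesis ``sheaf of Artinian rings'' is exactly what is needed to make each stalk Artinian so that Park's criterion applies.
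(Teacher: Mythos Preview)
Your proposal is correct and follows exactly the same approach as the paper: show each $B_x$ is Artinian via Park's theorem (Artinian stalk, finite isotropy), hence left max, and then apply Theorem~\ref{t:left.max}. The paper's proof is simply a two-sentence version of what you wrote.
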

\begin{proof}
Under these hypotheses each isotropy skew group ring is Artinian and hence left max.  The result follows from Theorem~\ref{t:left.max}.
\end{proof}

\subsection{von Neumann regularity of $\secringunit$}

In this section we prove that $\secringunit$ is von Neumann regular (a ring $R$ is von Neumann regular if $a\in aRa$ for all $a\in R$) if and only if $\OO$ is a sheaf of fields. One interesting consequence of this is that the zero section has closed image, which will be used in results studying the algebraic properties of $\secring$.

To show that when $\mathcal O$ is a sheaf of fields the zero section is closed we need first a sheaf theoretic result. Recall that $R^\times$ denotes the group of units of a ring $R$.

\begin{Lemma}\label{units:open}
Let $\mathcal O$ be a $\grp$-sheaf of rings. Then $\mathcal O^\times=\bigcup_{x\in \grp^{(0)}}\mathcal O_x^\times$ is open and the inversion map on $\mathcal O^\times$ is continuous.
\end{Lemma}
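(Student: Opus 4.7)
The plan is to use local sections to ``spread out'' an invertible stalk element into a continuous family of units, exploiting the fact that multiplication and the unit section are continuous.

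Fix $u \in \mathcal O^\times$, say $u\in \mathcal O_x^\times$ with two-sided inverse $v\in \mathcal O_x^\times$. Since $\grp^{(0)}$ is Hausdorff with a basis of compact open sets, Proposition~\ref{p:lots.of.secs} produces continuous compactly supported sections $s,t\colon \grp^{(0)}\to E$ of $p$ with $s(x)=u$ and $t(x)=v$. Using continuity of the fiberwise multiplication (SR2), the pointwise products $y\mapsto s(y)t(y)$ and $y\mapsto t(y)s(y)$ are also continuous sections of $p$. By construction, both agree at $x$ with the (continuous, by SR3) unit section $y\mapsto 1_y$.

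Now I would invoke the standard fact about local homeomorphisms that any two continuous sections agreeing at a point agree on an open neighborhood of that point (the proof goes: pick an open $N\ni 1_x$ on which $p$ restricts to a homeomorphism, pull back along each section to an open set on which both sections land in $N$, and use injectivity of $p|_N$). Applying this to the pairs $(st, 1_{(-)})$ and $(ts, 1_{(-)})$ yields an open neighborhood $W\subseteq \grp^{(0)}$ of $x$ on which $s(y)t(y)=1_y=t(y)s(y)$ for every $y\in W$. In particular, $s(y)\in \mathcal O_y^\times$ with inverse $t(y)$ for each $y\in W$.

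Since $p$ is a local homeomorphism, any continuous section $s$ is an open map onto its image (for each $y\in W$, pick a neighborhood $N'$ of $s(y)$ on which $p$ is a homeomorphism, and note that on a neighborhood $V\subseteq W$ of $y$, $s|_V$ coincides with $(p|_{N'})^{-1}|_V$). Therefore $s(W)$ is an open neighborhood of $u$ in $E$, and it is contained in $\mathcal O^\times$ by construction; this proves openness of $\mathcal O^\times$. For continuity of inversion on this neighborhood, observe that for every $e\in s(W)$ we have $e=s(p(e))$ and its inverse is $t(p(e))$, so the inversion map restricted to $s(W)$ equals $t\circ p$, which is continuous as a composition of continuous maps. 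Since every point of $\mathcal O^\times$ admits such a neighborhood, inversion is continuous on all of $\mathcal O^\times$.

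The only mild subtlety is the section-agreement lemma and the fact that sections of local homeomorphisms have open image; both are routine but deserve mention. No single step is a substantive obstacle — the argument is entirely formal once one has Proposition~\ref{p:lots.of.secs} and the étale nature of~$p$ in hand.
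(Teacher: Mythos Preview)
Your proof is correct and follows essentially the same approach as the paper's: pick local sections through an invertible element and its inverse, use continuity of multiplication and the unit section to find a neighborhood $W$ on which they are mutually inverse, and conclude that $s(W)\subseteq \mathcal O^\times$ is open with inversion given locally by $t\circ p$. The only cosmetic difference is that you invoke Proposition~\ref{p:lots.of.secs} to get global compactly supported sections, whereas the paper simply takes local sections coming directly from the local homeomorphism $p$; your extra detail on the section-agreement lemma and openness of $s(W)$ is welcome but not a departure in method.
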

\begin{proof}
Let $r\in \mathcal O_x^\times$ with inverse $r'$.  Then there are neighborhoods $U,U'$ of $x$ in $\grp^0$ and sections $s\colon U\to \mathcal O$, $s'\colon U'\to \mathcal O$ with $s(x)=r$ and $s'(x)=r'$.  Without loss of generality, we may assume that $U=U'$, or else we can replace them by $U\cap U'$.  Since $r'r=rr'=1_x$, there is a neighborhood $W$ of $x$ with $W\subseteq U$ such that $(s'\ast s)|_W=(s\ast s')|_W=\chi_W$ and so $r\in s(W)\subseteq \mathcal O^\times$. As $s(W)$ is open we deduce $\mathcal O^\times$ is open. Note that on $s(W)$, inversion is given by $s'\circ p$ where $p\colon \mathcal O\to \grp^{(0)}$ is the projection.  Thus inversion is continuous.
\end{proof}

\begin{Cor}\label{c:zero.sec.closed}
Let $\mathcal O$ be a $\grp$-sheaf of fields.  Then the image of the zero section is closed.
\end{Cor}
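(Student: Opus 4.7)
The plan is to identify the image of the zero section as the set-theoretic complement of the units, and then invoke Lemma~\ref{units:open} to conclude. Denote the image of the zero section by $Z = \{0_x \mid x\in \grp^{(0)}\}\subseteq E$, where $E$ is the underlying space of the sheaf $\mathcal O$.

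First I would observe that the hypothesis that $\mathcal O$ is a sheaf of fields means that each stalk $\mathcal O_x$ is a field, and in particular $\mathcal O_x^{\times} = \mathcal O_x \setminus \{0_x\}$. Taking the union over $x\in \grp^{(0)}$, this gives the disjoint decomposition
\[
E = \mathcal O^{\times} \sqcup Z,
\]
because every element of $E$ lies in some stalk, and within that stalk it is either zero or a unit but not both.

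Next I would apply Lemma~\ref{units:open}, which asserts that $\mathcal O^{\times}$ is open in $E$. Consequently $Z = E\setminus \mathcal O^{\times}$ is closed in $E$, which is exactly what is to be shown.

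There is essentially no obstacle: the entire argument reduces to the dichotomy ``zero or invertible'' that characterizes fields, combined with the openness of the unit locus already established in Lemma~\ref{units:open}. The only thing one needs to be mildly careful about is that the sheaf-of-fields hypothesis is applied stalkwise, so that the complement of the zero section inside each fiber $\mathcal O_x$ coincides with $\mathcal O_x^{\times}$; the global statement then follows by taking the union of fibers.
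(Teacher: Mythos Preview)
Your proof is correct and follows exactly the same approach as the paper: the paper's one-line proof states that the complement of the image of the zero section is $\mathcal O^\times$ and then invokes Lemma~\ref{units:open}. You have simply spelled out in more detail why this complement equals $\mathcal O^\times$ (the stalkwise field dichotomy), which is fine.
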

\begin{proof}
The complement of the image of the zero section is $\mathcal O^\times$ and so the result follows from Lemma~\ref{units:open}.
\end{proof}

We also use Lemma~\ref{units:open} in the characterization of von Neumann's regularity of $\secringunit$, as we see below. Recall that a unital ring $R$ is indecomposable if it has no central idempotents except $0$ and $1$.  For example, fields and integral domains are indecomposable.  The special case of the following result for sheaves of rings on a compact totally disconnected space can be found in~\cite[Proposition~V.2.6]{johnstone}.

\begin{Prop}\label{p:vnr}
Let $\mathcal O$ be a $\grp$-sheaf of indecomposable commutative rings.  Then $\secringunit$ is von Neumann regular if and only if $\mathcal O_x$ is a field for all $x\in \grp^{(0)}$.
\end{Prop}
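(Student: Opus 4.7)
The plan is to prove both directions separately, using Proposition~\ref{p:lots.of.secs} and the indecomposability hypothesis for the forward direction, and using Corollary~\ref{c:zero.sec.closed} together with Lemma~\ref{units:open} for the converse.

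For the forward direction, suppose $\secringunit$ is von Neumann regular, fix $x \in \grp^{(0)}$, and let $r \in \mathcal O_x$ be nonzero; the goal is to show $r$ is invertible in $\mathcal O_x$. First I would invoke Proposition~\ref{p:lots.of.secs} to produce a compactly supported section $s \in \secringunit$ with $s(x) = r$, then apply von Neumann regularity to obtain $t \in \secringunit$ with $sts = s$. Evaluating at $x$ gives $r t(x) r = r$, so $e := r t(x) \in \mathcal O_x$ is an idempotent with $er = r$. Since $\mathcal O_x$ is an indecomposable commutative ring, the only idempotents are $0$ and $1$; the case $e = 0$ forces $r = 0$, contradicting the choice of $r$, so $e = 1$ and $t(x)$ is a multiplicative inverse for $r$.

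For the converse, suppose every stalk is a field and fix $s \in \secringunit$; the goal is to produce $t \in \secringunit$ with $sts = s$. The key step is to analyze $V = \{x \in \grp^{(0)} : s(x) \neq 0\}$. Since $\mathcal O$ is a sheaf of fields, Corollary~\ref{c:zero.sec.closed} says the image of the zero section is closed in $E$, so $V = s^{-1}(E \setminus \text{zero section})$ is open in $\grp^{(0)}$; combined with the compact support characterization of $\secringunit$ in the Hausdorff setting (applied to $\grp^{(0)}$, which is locally compact Hausdorff), $V$ is in fact compact open. On $V$ the section $s$ lands in $\mathcal O^\times = \bigcup_x \mathcal O_x^\times$, and by Lemma~\ref{units:open} inversion is continuous on $\mathcal O^\times$, so $t_0 := (\,\cdot\,)^{-1} \circ s \colon V \to E$ is a continuous local section. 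Extending $t_0$ by zero outside $V$ yields $t \in \secringunit$ supported on the compact open set $V$, and a stalkwise check shows $sts = s$ (trivially outside $V$, and on $V$ by the defining identity $s(x) s(x)^{-1} s(x) = s(x)$ in the field $\mathcal O_x$).

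The only real subtlety I anticipate is justifying that $V$ is compact open in the converse direction: openness comes from continuity of $s$ and closedness of the zero section, while compactness comes from the compact support of $s$. Once this is in place, both directions are essentially one-line computations, so I do not expect any deeper obstacle.
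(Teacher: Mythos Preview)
Your proposal is correct and matches the paper's proof almost verbatim: both directions use the same ideas (Proposition~\ref{p:lots.of.secs} plus indecomposability for one direction, and openness of $\mathcal O^\times$ with continuity of inversion for the other). The only cosmetic difference is that for the ``fields $\Rightarrow$ von Neumann regular'' direction you cite Corollary~\ref{c:zero.sec.closed} to see that $V$ is open, whereas the paper directly writes $\supp(s)=s^{-1}(\mathcal O^\times)$ and cites Lemma~\ref{units:open}; since Corollary~\ref{c:zero.sec.closed} is an immediate consequence of Lemma~\ref{units:open}, this is the same argument.
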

\begin{proof}
If $\mathcal O$ is a sheaf of fields and $s\in \secringunit$, then $U=\supp(s) = s^{-1}(\mathcal O^\times)$ is compact open (as $\mathcal O^\times$ is open by Lemma~\ref{units:open}) and $s\colon U\to \mathcal O^\times$.  By continuity of inversion on $\mathcal O^\times$, we deduce that $s'\colon U\to \mathcal O^\times$ given by $s'(x) =s(x)\inv$ is continuous and, by construction, it has support $U$. Thus $s'\in \secringunit$.  Trivially, $s\ast s'\ast s=s$ and so $\secringunit$ is von Neumann regular. 

Conversely, suppose that $\secringunit$ is von Neumann regular.  Let $0_x\neq r\in \mathcal O_x$.  Then there exists $s\in \secringunit$ with $s(x)=r$.  Choose $s'\in \secringunit$ with $s\ast s'\ast s=s$. Note that if $r'=s'(x)$, then $rr'r=r$ and so $rr'$ is a non-zero idempotent. But since $\mathcal O_x$ is indecomposable, this implies $rr'=1_x$.  Thus $r\in \mathcal O_x^\times$.
\end{proof}

Note that the Pierce sheaf associated to a commutative ring $A$ is a sheaf of indecomposable commutative rings over the Pierce spectrum of $A$ and hence the construction used in~\cite{BenDan} (see Section~\ref{sec:interplay}) to build a groupoid algebra with coefficients in a sheaf of ring from a skew inverse semigroups ring $A\rtimes S$ will produce a sheaf of fields when $A$ is commutative and von Neumann regular, see \cite{Pierce}.

\subsection{The centralizer of $\secringunit$ in $\secring$}

Maximal commutativity of $\secringunit$ will play a key role in describing primitivity and simplicity of $\secring$. It is also a much-studied concept in the theory of partial skew rings, see \cite{BGOR,GOR}. Of course, $\mathcal O$ should be a $\grp$-sheaf of commutative rings in order for $\secringunit$ to be commutative. In this section, we characterize (under suitable hypotheses on $\mathcal O$) the centralizer of $\secringunit$ in $\secring$, denoted by $C_{\secring}(\secringunit)$, and give sufficient and necessary conditions for $\secringunit$ to be maximal commutative. We also get a Generalized Uniqueness Theorem for $\secring$, similar to what is found for Steinberg algebras in \cite{CEP}.

We begin with an important observation that is true for any $\grp$-sheaf of rings (not necessarily commutative).

\begin{Prop}\label{p:centralizer.supp.iso}
Let $\grp$ be an ample groupoid and $\mathcal O$ a $\grp$-sheaf of rings.   If $f\in \secring$ centralizes $\secringunit$, then $\supp(f)\subseteq \mathrm{Iso}(\grp)$.  
In particular, if $\grp$ is Hausdorff and the zero section has closed image, then $f\in \Gamma_c(\mathrm{Int}(\mathrm{Iso}(\grp)),\mathcal O)$.
\end{Prop}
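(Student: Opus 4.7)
The plan is to prove the main statement by contradiction. Suppose that $\gamma\in\grp\skel 1$ satisfies $f(\gamma)\neq 0_{\ran(\gamma)}$ but $\dom(\gamma)\neq\ran(\gamma)$. Since $\grp\skel 0$ is locally compact Hausdorff with a basis of compact open sets (as $\grp$ is ample), I can pick a compact open $U\subseteq\grp\skel 0$ with $\dom(\gamma)\in U$ and $\ran(\gamma)\notin U$. Then $\chi_U\in\secringunit$, so the centralizing hypothesis gives $(f\ast\chi_U)(\gamma)=(\chi_U\ast f)(\gamma)$, and I will show these two values do not agree.

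The second step is a direct computation from \eqref{eq:define.conv}. In $(f\ast\chi_U)(\gamma)=\sum_{\beta\rho=\gamma}f(\beta)\alpha_\beta(\chi_U(\rho))$, a nonzero contribution requires $\rho\in U$, which forces $\rho$ to be a unit, hence $\rho=\dom(\gamma)$ and $\beta=\gamma$. Using that $\alpha_\gamma$ is unital (axiom (SR4)), the sum collapses to $f(\gamma)\alpha_\gamma(1_{\dom(\gamma)})=f(\gamma)$. The symmetric computation for $(\chi_U\ast f)(\gamma)$ requires $\beta\in U$, which would force $\beta=\ran(\gamma)$, but $\ran(\gamma)\notin U$ by construction, so $(\chi_U\ast f)(\gamma)=0$. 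This yields $f(\gamma)=0$, a contradiction, so $\supp(f)\subseteq \mathrm{Iso}(\grp)$.

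For the “in particular” clause, I invoke the description from Section~\ref{s:sheaf.coeff}: when $\grp$ is Hausdorff, $\secring$ consists of continuous compactly supported sections $f\colon\grp\skel 1\to E$ with $p\circ f=\ran$. Since the zero section has closed image in $E$, the set $W=\{\gamma\in\grp\skel 1\mid f(\gamma)\neq 0_{\ran(\gamma)}\}$ is the preimage under the continuous $f$ of an open set, hence open. By the first part, $W\subseteq \mathrm{Iso}(\grp)$, so in fact $W\subseteq\mathrm{Int}(\mathrm{Iso}(\grp))$. Because $\mathrm{Int}(\mathrm{Iso}(\grp))$ is an open subgroupoid of $\grp$ containing $\grp\skel 0$, and $f$ vanishes outside $W$, the section $f$ represents an element of $\Gamma_c(\mathrm{Int}(\mathrm{Iso}(\grp)),\mathcal O)$, viewed canonically as a subalgebra of $\secring$.

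I do not expect any serious obstacle here; the only points that need care are the identification of the single surviving factorization in each convolution (which rests on $U$ lying inside $\grp\skel 0$) and the appeal to axiom (SR4) plus continuity of the unit section to ensure $\alpha_\gamma(1_{\dom(\gamma)})=1_{\ran(\gamma)}$. The reduction in the “in particular” clause is a routine openness/closedness argument, once one recalls the continuous-compactly-supported description of $\secring$ from the Hausdorff case.
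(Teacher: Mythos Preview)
Your proof is correct and follows essentially the same approach as the paper's own proof: both separate $\dom(\gamma)$ from $\ran(\gamma)$ by a compact open $U\subseteq\grp\skel 0$ and use $f\ast\chi_U(\gamma)=\chi_U\ast f(\gamma)$ to force $f(\gamma)=0$, and both handle the Hausdorff clause by noting that continuity of $f$ plus closedness of the zero section makes $\supp(f)$ open. You have simply supplied more detail in the convolution computations than the paper does.
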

\begin{proof}
 If $\dom(\gamma)\neq \ran(\gamma)$ then, by the Hausdorff property of $\grp^{(0)}$, there is a compact open subset $U$ of $\grp^{(0)}$ with $\dom(\gamma)\in U$ and $\ran(\gamma)\notin U$.  Therefore $f(\gamma) = f\ast \chi_U(\gamma)=\chi_U\ast f(\gamma) = 0$. Thus, $f$ is supported on $\mathrm{Iso}(\grp)$.  If, in addition, $\grp$ is Hausdorff, then $f$ is continuous and hence, since the zero section is closed, $\supp(f)$ is open.  Thus $f\in \Gamma_c(\mathrm{Int}(\mathrm{Iso}(\grp)),\mathcal O).$
\end{proof}

We immediately obtain the following corollary.

\begin{Cor}\label{c:effective.case.max.cen}
Let $\grp$ be an effective Hausdorff ample groupoid and let $\mathcal O$ be a sheaf of commutative rings with the zero section closed.  Then $\secringunit$ is a maximal commutative subring of $\secring$.
\end{Cor}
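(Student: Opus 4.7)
The plan is to combine Proposition~\ref{p:centralizer.supp.iso} with the effectiveness hypothesis in a direct way, since the corollary should follow almost immediately from the results already established. First I would observe that $\secringunit$ is automatically commutative: the convolution of two sections supported on $\grp^{(0)}$ reduces to pointwise multiplication in the commutative stalks $\mathcal O_x$, so there is nothing to check there.

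Next, I would take an arbitrary element $f \in C_{\secring}(\secringunit)$ and apply Proposition~\ref{p:centralizer.supp.iso}. Since $\grp$ is Hausdorff and the zero section has closed image by hypothesis, the proposition gives $f \in \Gamma_c(\mathrm{Int}(\mathrm{Iso}(\grp)), \mathcal O)$. This is the key step, and it is where all the work is hidden; the remainder is a trivial identification.

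Then I would invoke effectiveness: by definition $\grp^{(0)} = \mathrm{Int}(\mathrm{Iso}(\grp))$, so $\Gamma_c(\mathrm{Int}(\mathrm{Iso}(\grp)), \mathcal O) = \secringunit$, and hence $f \in \secringunit$. This shows $C_{\secring}(\secringunit) \subseteq \secringunit$, and the reverse inclusion is just commutativity of $\secringunit$. Thus $\secringunit$ equals its own centralizer in $\secring$, which is precisely the statement that it is a maximal commutative subring.

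Honestly, there is no real obstacle: the corollary is a three-line bookkeeping consequence of Proposition~\ref{p:centralizer.supp.iso} and the definition of effectiveness. The only thing one must be careful about is recording why the hypotheses of Proposition~\ref{p:centralizer.supp.iso} are met (Hausdorffness of $\grp$ is assumed, closedness of the zero section is assumed), and then noting that under effectiveness the containment $\supp(f) \subseteq \mathrm{Int}(\mathrm{Iso}(\grp))$ forces $f$ to be supported on the unit space.
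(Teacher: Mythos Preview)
Your proposal is correct and matches the paper's approach exactly: the paper presents this corollary with no explicit proof, simply noting that it follows immediately from Proposition~\ref{p:centralizer.supp.iso}. Your write-up is just a careful unpacking of that immediate deduction, and there is nothing to add or correct.
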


\begin{Rmk}
Taking $\OO$ as the constant sheaf of Example~\ref{ex:constant.sheaf}, the above corollary recovers \cite[Proposition 3.8]{groupoidprimitive}.
\end{Rmk}

We will impose a few extra conditions on the sheaf $\mathcal O$ in order to obtain a characterization of the centralizer.
The first concept we need to introduce is the following.
 
\begin{Def} Let $\mathcal O$ be a $\grp$-sheaf of rings. We define $\ker \mathcal O = \{\gamma\in \mathrm{Iso}(\grp)\mid \alpha_\gamma(a)=a, \forall a\in \mathcal O_{\dom(\gamma)}\}$.  Note that $\ker \mathcal O$ is a subgroupoid of $\mathrm{Iso}(\grp)$ and $\mathrm{Int}(\ker\OO)$ is an ample subgroupoid. Therefore, we can consider the ring $\Gamma_c(\mathrm{Int}(\ker\OO),\OO)$, which can also be described as the additive subgroup of $\secring$ generated by $s\chi_U$, where $U$ is a compact-open bisection contained in $\mathrm{Int}(\ker\OO)$.
 \end{Def}

\begin{Lemma}\label{l:centralizer}
Let $\grp$ be an ample groupoid and $\mathcal O$ a $\grp$-sheaf of commutative rings. Then
\[\Gamma_c(\mathrm{Int}(\ker\OO),\OO)\scj \{f\in\secring\mid \supp(f)\scj\ker\OO\}\scj C_{\secring}(\secringunit).\]
If $\grp$ is Hausdorff and the zero section has closed image, then the first inclusion is an equality. Moreover, if $\mathcal O$ is $\grp$-sheaf of integral domains, then the second inclusion is an equality.
\end{Lemma}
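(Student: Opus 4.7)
The plan is to handle the four assertions (two inclusions, two equalities) in turn. The first inclusion is essentially tautological: by definition, $\Gamma_c(\mathrm{Int}(\ker\OO),\OO)$ is the additive subgroup generated by sections of the form $s\chi_U$ with $U$ a compact open bisection inside $\mathrm{Int}(\ker\OO)$, and any finite sum of such has its non-vanishing set contained in $\bigcup U_i\subseteq\mathrm{Int}(\ker\OO)\subseteq\ker\OO$.

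For the second inclusion, I would fix $f\in\secring$ with $\supp(f)\subseteq\ker\OO$ and $g\in\secringunit$, then compute the convolutions directly. Since $g$ is supported on $\grp^{(0)}$, the convolution sum collapses: $f\ast g(\gamma)=f(\gamma)\alpha_\gamma(g(\dom(\gamma)))$ and $g\ast f(\gamma)=g(\ran(\gamma))f(\gamma)$. Whenever $f(\gamma)\neq 0$ we have $\gamma\in\ker\OO$, so in particular $\dom(\gamma)=\ran(\gamma)$ and $\alpha_\gamma$ fixes $\OO_{\dom(\gamma)}$ pointwise. Combined with the commutativity of $\OO_{\ran(\gamma)}$, this gives $f\ast g(\gamma)=g\ast f(\gamma)$ at every $\gamma$, and hence $f\in C_{\secring}(\secringunit)$.

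For the first equality, assume $\grp$ Hausdorff and the zero section closed. Given $f$ with $\supp(f)\subseteq\ker\OO$, the Hausdorff description of $\secring$ recalled in Section~\ref{s:sheaf.coeff} tells me $f$ is continuous with compact $\supp(f)$, and closedness of the zero section makes $\supp(f)$ open, hence compact open; since it sits inside $\ker\OO$ and is open, $\supp(f)\subseteq\mathrm{Int}(\ker\OO)$. I would then cover $\supp(f)$ by finitely many compact open bisections $U_1,\ldots,U_n\subseteq\mathrm{Int}(\ker\OO)$ (using ampleness), disjointify by setting $V_i=U_i\setminus(U_1\cup\cdots\cup U_{i-1})$ (which remain compact open bisections in the Hausdorff ample setting, where compact opens form a Boolean algebra), and write $f=\sum_i s_i\chi_{V_i}$ with $s_i=f\circ(\ran|_{V_i})\inv$ continuous. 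This exhibits $f\in\Gamma_c(\mathrm{Int}(\ker\OO),\OO)$.

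For the second equality, assuming $\OO$ is a sheaf of integral domains, I would take $f\in C_{\secring}(\secringunit)$ and a putative $\gamma\in\supp(f)$. By Proposition~\ref{p:centralizer.supp.iso}, $\gamma\in\mathrm{Iso}(\grp)$, so $x:=\dom(\gamma)=\ran(\gamma)$. For arbitrary $a\in\OO_x$, use Proposition~\ref{p:lots.of.secs} to produce $g\in\secringunit$ with $g(x)=a$; the same convolution computation as above gives $f(\gamma)\alpha_\gamma(a)=af(\gamma)$ in $\OO_x$, i.e., $f(\gamma)(\alpha_\gamma(a)-a)=0$. Since $\OO_x$ is an integral domain and $f(\gamma)\neq 0$, this forces $\alpha_\gamma(a)=a$; as $a$ was arbitrary, $\gamma\in\ker\OO$. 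The main subtlety is the disjointification step for the first equality, where one must argue cleanly that the bisections $V_i$ remain compact open and that the resulting sections are continuous on their supports; the remaining steps are essentially algebraic manipulations of the convolution formula.
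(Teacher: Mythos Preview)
Your proposal is correct and follows essentially the same approach as the paper's proof: the same convolution computation for the second inclusion, the same use of Proposition~\ref{p:centralizer.supp.iso} and Proposition~\ref{p:lots.of.secs} together with the integral domain hypothesis for the second equality, and the same continuity/open-support argument for the first equality. The only difference is that you carry out the disjointification step for the first equality explicitly, whereas the paper simply observes that $\supp(f)$ is an open subset of $\ker\OO$ and invokes the Hausdorff description of $\Gamma_c(\mathrm{Int}(\ker\OO),\OO)$ directly; your version is a harmless elaboration of the same idea.
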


\begin{proof}
The first inclusion is immediate. For the second inclusion, suppose first that $f\in\secring$ is such that $\supp(f)\scj\ker\OO$. Then, for $g\in\secringunit$, we have that
\[f*g(\gamma)=\begin{cases}
f(\gamma)\alpha_\gamma(g(\dom(\gamma))), & \text{if }\gamma\in \supp(f) \\
0, & \text{else,}
\end{cases}\]
and
\[g*f=\begin{cases}
g(\ran(\gamma))\alpha_{\ran(\gamma)}(f(\gamma)), & \text{if }\gamma\in \supp(f) \\
0, & \text{else.}
\end{cases}\]
For $\gamma\in \supp(f)\scj\ker\OO$, we have that
\begin{align*}
    f(\gamma)\alpha_\gamma(g(\dom(\gamma))) & =f(\gamma)g(\dom(\gamma)) \\ &=g(\ran(\gamma))f(\gamma)\\
    &=g(\ran(\gamma))\alpha_{\ran(\gamma)}(f(\gamma)),
\end{align*}
and hence $g*f=f*g$. Since $g$ was arbitrary, $f\in C_{\secring}(\secringunit)$.

When $\grp$ is Hausdorff, an element $f\in\secring$ is continuous. If, moreover the zero section has closed image, then $\supp(f)$ is an open subset of $\ker\OO$, from which we get the inclusion $\{f\in\secring\mid \supp(f)\scj\ker\OO\}\scj \Gamma_c(\mathrm{Int}(\ker\OO),\OO)$.

Now, suppose that $\mathcal O$ is $\grp$-sheaf of integral domains. For an element $f\in C_{\secring}(\secringunit)$, we claim that $f$ is supported on $\ker \mathcal O$. We already know that $f$ is supported on $\mathrm{Iso}(\grp)$ by Proposition~\ref{p:centralizer.supp.iso}. 
Suppose that $\gamma\in \mathrm{Iso}(\grp)$ and $f(\gamma)\neq 0$.  Put $x=\dom(\gamma)=\ran(\gamma)$.  For $a\in \mathcal O_x$, let $s$ be a section in $\secringunit$ with $s(x)=a$ (such a section exists by Proposition~\ref{p:lots.of.secs}).  Then,
\[f(\gamma)\alpha_{\gamma}(a) = f\ast s(\gamma) = s\ast f(\gamma) = af(\gamma).\]  Since $\mathcal O_x$ is an integral domain and $f(\gamma)\neq 0$, we deduce that $\alpha_{\gamma}(a)=a$.
\end{proof}


We now characterize maximal commutativity of $\secring$, without any assumption of Hausdorffness in the groupoid.

\begin{Prop}\label{p:neweffective}
Let $\grp$ be an ample groupoid and $\mathcal O$ a $\grp$-sheaf of commutative rings. If $\secringunit$ is a maximal commutative subring of $\secring$, then $\mathrm{Int}(\ker\OO)=\grp^{(0)}$. If moreover $\mathcal O$ is a $\grp$-sheaf of integral domains and $\Gamma_c(\mathrm{Int}(\ker\OO),\OO)=\{f\in\secring\mid \supp(f)\scj\ker\OO\}$, then the converse is also true.
\end{Prop}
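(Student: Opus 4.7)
The plan is to handle each implication separately, with Lemma~\ref{l:centralizer} supplying essentially all of the machinery. For the forward implication I argue the contrapositive. Since $\grp^{(0)} \scj \ker\OO$ and $\grp^{(0)}$ is open in $\grp$, one always has $\grp^{(0)} \scj \mathrm{Int}(\ker\OO)$. So suppose $\mathrm{Int}(\ker\OO) \supsetneq \grp^{(0)}$ and pick $\gamma \in \mathrm{Int}(\ker\OO) \setminus \grp^{(0)}$. Because $\grp$ is ample and $\mathrm{Int}(\ker\OO)$ is open, the basis of compact open bisections of $\grp$ produces a compact open bisection $V$ with $\gamma \in V \scj \mathrm{Int}(\ker\OO)$. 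Then $\chi_V \in \Gamma_c(\mathrm{Int}(\ker\OO),\OO)$, and the chain of inclusions in Lemma~\ref{l:centralizer} places $\chi_V$ in $C_{\secring}(\secringunit)$. On the other hand, $\chi_V(\gamma) = 1_{\ran(\gamma)} \neq 0$ while $\gamma \notin \grp^{(0)}$, so $\chi_V$ is not supported on $\grp^{(0)}$; hence $\chi_V \notin \secringunit$. This exhibits an element of $C_{\secring}(\secringunit) \setminus \secringunit$, contradicting maximal commutativity of $\secringunit$.

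For the converse, I assume the additional hypotheses together with $\mathrm{Int}(\ker\OO) = \grp^{(0)}$. Since $\mathcal O$ is a sheaf of integral domains, the second inclusion in Lemma~\ref{l:centralizer} becomes an equality, giving $\{f \in \secring \mid \supp(f) \scj \ker\OO\} = C_{\secring}(\secringunit)$. Combined with the hypothesis $\Gamma_c(\mathrm{Int}(\ker\OO),\OO) = \{f \in \secring \mid \supp(f) \scj \ker\OO\}$ and with $\mathrm{Int}(\ker\OO) = \grp^{(0)}$, this collapses into the chain
\[ \secringunit = \Gamma_c(\grp^{(0)},\OO) = \Gamma_c(\mathrm{Int}(\ker\OO),\OO) = \{f \in \secring \mid \supp(f) \scj \ker\OO\} = C_{\secring}(\secringunit), \]
which is precisely maximal commutativity of $\secringunit$ in $\secring$.

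The proof is short because the technical content lives inside Lemma~\ref{l:centralizer}. The only minor point to verify is the availability of a compact open bisection sitting inside the open set $\mathrm{Int}(\ker\OO)$ and containing a prescribed non-unit isotropy element; this is immediate from ampleness of $\grp$, so no serious obstacle arises.
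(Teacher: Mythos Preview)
Your proof is correct and follows essentially the same approach as the paper. Both arguments for the forward direction use the characteristic function $\chi_V$ of a compact open bisection $V\subseteq\ker\OO$ together with Lemma~\ref{l:centralizer}; you phrase it as a contrapositive while the paper argues directly (showing every compact open $U\subseteq\ker\OO$ must lie in $\grp^{(0)}$), but the content is identical. Your converse spells out exactly the chain of equalities that the paper summarizes as ``follows immediately from the hypothesis and Lemma~\ref{l:centralizer}.''
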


\begin{proof}
Suppose first that $\secringunit$ is maximal commutative and let $U\subseteq \ker\mathcal O$ be compact open.  We must show that $U\subseteq \grp^{(0)}$.  
We have that $\chi_U$ centralizes $\secringunit$ by Lemma~\ref{l:centralizer} and hence, by maximality, $\chi_U\in \secringunit$.  Therefore,  $U\subseteq \grp^{(0)}$ and  $\mathrm{Int}(\ker\OO)=\grp^{(0)}$ as required. To get the equality $\Gamma_c(\mathrm{Int}(\ker\OO),\OO)=\{f\in\secring\mid \supp(f)\scj\ker\OO\}$, we observe that  $\secringunit\scj\Gamma_c(\mathrm{Int}(\ker \OO),\OO)\scj \{f\in\secring\mid \supp(f)\scj\ker\OO\}$ and apply Lemma~\ref{l:centralizer}.

The converse follows immediately from the hypothesis and Lemma \ref{l:centralizer}.
\end{proof}

\begin{Rmk} Recall that when $\mathcal O$ is the constant sheaf $\Delta(R)$ of Example~\ref{ex:constant.sheaf}, $\Gamma_c(\mathscr G,\Delta(R))$ is the usual groupoid algebra of \cite{Steinbergalgebra} and $\Gamma_c(\mathscr G^{(0)},\Delta(R))$ is the so called diagonal sub-algebra. In this case, we have that $\ker \Delta(R) = \mathrm{Iso}(\mathscr G)$, and so the condition $\mathrm{Int}(\ker \Delta(R))=\grp^{(0)}$ says that $\mathscr G$ is effective. We conclude that $\Gamma_c(\mathscr G^{(0)},\Delta(R))$ is maximal commutative in the usual groupoid algebra $\Gamma_c(\mathscr G,\Delta(R))$ if, and only if, $\mathscr G$ is effective and $\Gamma_c(\mathscr G^{(0)},\Delta(R)) =  \{f\in\Gamma_c(\mathscr G,\Delta(R))\mid \supp(f)\scj \mathrm{Iso}(\mathscr G) \}$.
\end{Rmk}

 In the case of Hausdorff groupoids, and sheaves such that the zero section is closed, the criteria given in Proposition~\ref{p:neweffective} for maximal commutativity can be simplified, as we show below. 

\begin{Cor}\label{cor:masa.Hausdorff}
Let $\grp$ be a Hausdorff ample groupoid and $\mathcal O$ a $\grp$-sheaf of integral domains such that the zero section is closed (eg. $\OO$ is a $\grp$-sheaf of fields).  Then, $\secringunit$ is a maximal commutative subring of $\secring$ if and only if $\mathrm{Int}(\ker \mathcal O) = \grp^{(0)}$.  In particular, if $\grp$ is effective, then $\secringunit$ is a maximal commutative subring.
\end{Cor}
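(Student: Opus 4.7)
The plan is to deduce this corollary as a direct combination of Lemma~\ref{l:centralizer} and Proposition~\ref{p:neweffective}, with the Hausdorff/closed-zero-section hypothesis supplying exactly the technical condition that Proposition~\ref{p:neweffective} requires for its converse direction.

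First I would observe that under the standing assumptions of the corollary (with the parenthetical case $\OO$ a sheaf of fields being covered by Corollary~\ref{c:zero.sec.closed}), Lemma~\ref{l:centralizer} applies to give both
\[\Gamma_c(\mathrm{Int}(\ker\OO),\OO)=\{f\in\secring\mid \supp(f)\subseteq\ker\OO\}\]
(using Hausdorffness plus the closed zero section, which forces $\supp(f)$ to be open for every $f\in\secring$) and the second equality
\[\{f\in\secring\mid \supp(f)\subseteq\ker\OO\}=C_{\secring}(\secringunit)\]
(using that $\OO$ is a sheaf of integral domains). With the first equality in hand, the hypothesis of Proposition~\ref{p:neweffective} is verified, so its \emph{if-and-only-if} conclusion applies and yields the desired equivalence between $\secringunit$ being maximal commutative and $\mathrm{Int}(\ker\OO)=\grp^{(0)}$.

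For the final ``in particular'' clause, I would verify the chain of inclusions
\[\grp^{(0)}\subseteq\mathrm{Int}(\ker\OO)\subseteq\mathrm{Int}(\mathrm{Iso}(\grp)).\]
The first inclusion holds because $\grp^{(0)}\subseteq\ker\OO$ by axiom (S1) and $\grp^{(0)}$ is already open. The second holds because $\ker\OO\subseteq\mathrm{Iso}(\grp)$ by definition. Effectiveness of $\grp$ means the right-hand side equals $\grp^{(0)}$, collapsing the chain and forcing $\mathrm{Int}(\ker\OO)=\grp^{(0)}$, whence the already-established equivalence gives maximal commutativity.

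There is essentially no real obstacle here; the corollary is a bookkeeping consequence of the two earlier results. The only mildly delicate point is making sure the hypothesis ``zero section closed'' actually discharges the Hausdorffness-dependent clause of Lemma~\ref{l:centralizer}, and recognising that the sheaf-of-fields case is handled by Corollary~\ref{c:zero.sec.closed} so that the parenthetical remark in the statement is covered without additional argument.
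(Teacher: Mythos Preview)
Your proposal is correct and follows essentially the same approach as the paper: both arguments rest on Lemma~\ref{l:centralizer}, using the Hausdorff plus closed-zero-section hypothesis to collapse the chain of inclusions there to equalities. The only cosmetic difference is that you route the final equivalence through Proposition~\ref{p:neweffective}, whereas the paper argues directly that maximal commutativity is equivalent to $\secringunit=\Gamma_c(\mathrm{Int}(\ker\OO),\OO)$, which is trivially equivalent to $\grp^{(0)}=\mathrm{Int}(\ker\OO)$; your treatment of the ``in particular'' clause is also correct and slightly more explicit than the paper's.
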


\begin{proof}
When $\grp$ is Hausdorff, an element $f\in\secring$ is continuous. If, moreover the zero section has closed image, then $\supp(f)$ is an open subset of $\ker\OO$, from which we get the inclusion $\{f\in\secring\mid \supp(f)\scj\ker\OO\}\scj \Gamma_c(\mathrm{Int}(\ker\OO),\OO)$. By Lemma~\ref{l:centralizer}, we then get that $\secringunit$ is maximal commutative if and only if $\secringunit=\Gamma_c(\mathrm{Int}(\ker \mathcal O),\mathcal O)$.  But this is trivially equivalent to $\grp^{(0)}=\mathrm{Int}(\ker \mathcal O)$.
\end{proof}

\begin{Thm}[Generalized Uniqueness Theorem]\label{gut}
 Let $\grp$ be an ample groupoid and $\mathcal O$ a $\grp$-sheaf of commutative rings. A ring homomorphism $\pi:\secring\to A$ is injective if and only if $\pi|_{C_{\secring}(\secringunit)}$ is injective.
\end{Thm}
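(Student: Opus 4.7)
The forward direction is immediate: any restriction of an injective map is injective. For the converse, assume $\pi|_C$ is injective, where $C := C_{\secring}(\secringunit)$, and suppose for contradiction that $\ker(\pi)\neq 0$. Fix $0\neq f\in\ker(\pi)$; the plan is to use the $\secringunit$-bimodule action on the ideal $\ker(\pi)$ to manufacture a nonzero element of $\ker(\pi)\cap C$. By Lemma~\ref{l:centralizer}, any element of $\secring$ with support contained in $\ker(\OO)$ lies in $C$, so it suffices to produce a nonzero element of $\ker(\pi)$ supported on $\ker(\OO)$.

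Pick $\gamma_0\in\supp(f)$ with $f(\gamma_0)\neq 0$ and set $x_0 := \dom(\gamma_0)$. For any compact open bisection $V\ni\gamma_0$, a direct convolution computation (using that $V$ is a bisection to isolate $\beta=\gamma_0^{-1}$, $\rho=\gamma_0$) gives
\[(\chi_{V^{-1}}\ast f)(x_0) \,=\, \alpha_{\gamma_0^{-1}}(f(\gamma_0))\,\neq\,0.\]
Thus, replacing $f$ by $\chi_{V^{-1}}\ast f\in\ker(\pi)$, we may assume $x_0\in\supp(f)\cap\grp^{(0)}\subseteq\supp(f)\cap\ker(\OO)$. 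Next, for compact open $U\subseteq\grp^{(0)}$ containing $x_0$, the identity
\[(\chi_U\ast f\ast\chi_U)(\sigma) \,=\, \chi_U(\ran\sigma)\,f(\sigma)\,\alpha_\sigma(\chi_U(\dom\sigma))\]
shows that $\chi_U\ast f\ast\chi_U\in\ker(\pi)$ has support $\{\sigma\in\supp(f):\ran\sigma,\dom\sigma\in U\}$ and still takes the nonzero value $f(x_0)$ at $x_0$. Proposition~\ref{p:finiteness.prop} bounds $|\supp(f)\cap\ran^{-1}(x_0)|$ and $|\supp(f)\cap\dom^{-1}(x_0)|$, and the Hausdorff property of $\grp^{(0)}$ lets us shrink $U$ to exclude from these fibers any $\sigma$ with $\ran\sigma\neq\dom\sigma$.

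Finally, to eliminate residual support at $\sigma\in\supp(f)\setminus\ker(\OO)$ while preserving the value at $x_0$, one uses Proposition~\ref{p:lots.of.secs} to produce sections $s\in\secringunit$ with prescribed values at $\ran\sigma$ and $\dom\sigma$, and combines $s\ast f$ with $f\ast s$ suitably to exploit the commutativity of $\OO$: for $\sigma\notin\ker(\OO)$ (including non-isotropy arrows and isotropy arrows with nontrivial $\alpha_\sigma$), such algebraic manipulations produce cancellations that vanish precisely on $\ker(\OO)$. Iterating over the finitely many relevant fibers (again by Proposition~\ref{p:finiteness.prop}) yields the desired nonzero element of $\ker(\pi)$ supported on $\ker(\OO)$, contradicting $\pi|_C$ injective. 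This mirrors the classical Generalized Uniqueness Theorem for Steinberg algebras \cite{CEP}, with $\mathrm{Iso}(\grp)$ replaced throughout by $\ker(\OO)$.

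The main obstacle is the final cancellation step. A naive commutator $s\ast f-f\ast s$ automatically vanishes at every unit (since $\OO_{x_0}$ is commutative and $\alpha_{x_0}=\mathrm{id}$), so it cannot be used as a blunt instrument to subtract bad support without also killing the $x_0$-contribution. Moreover, bad $\sigma\in\supp(f)\setminus\ker(\OO)$ may have $\ran\sigma,\dom\sigma$ accumulating at $x_0$ (so that no shrinking of $U$ excludes them topologically). The proof therefore demands a careful choice of sections on left versus right that distinguishes the $\alpha_\sigma$-action on $\OO_{\dom\sigma}$ from the identity on $\OO_{\ran\sigma}$ only where $\sigma\notin\ker(\OO)$, and an inductive or simultaneous elimination across all such $\sigma$.
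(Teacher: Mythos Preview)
Your argument has a genuine gap at the ``final cancellation step,'' and you essentially acknowledge this yourself. The phrase ``combines $s\ast f$ with $f\ast s$ suitably'' is not a proof, and your last paragraph explains precisely why the obvious approach fails: commutators $s\ast f - f\ast s$ vanish at every unit, so they cannot preserve the nonzero value at $x_0$ while killing bad support. You then describe what a proof ``would demand'' without supplying it. In particular, even after your shrinking argument there may remain isotropy arrows $\sigma\in\grp_{x_0}^{x_0}\setminus\ker\OO$ in $\supp(f)$; you have no mechanism to eliminate these without also annihilating $f(x_0)$. The appeal to Proposition~\ref{p:finiteness.prop} for ``finitely many relevant fibers'' is also imprecise: that proposition bounds each individual fiber, not the number of fibers meeting $\supp(f)$, so your inductive elimination is not obviously finite away from the $x_0$-fiber.

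The paper takes an entirely different and much shorter route. It invokes Theorem~\ref{siri} to identify $\secring$ with the skew inverse semigroup ring $\secringunit\rtimes\grp^a$, and then cites the second part of the proof of \cite[Theorem~3.4]{BGOR}, which shows in that generality that every nonzero ideal of a skew inverse semigroup ring intersects the centralizer of the diagonal subalgebra nontrivially. All of the delicate sheaf-level manipulations you are attempting are absorbed into that citation. Note also that your target is stronger than necessary: you aim to produce an element supported on $\ker\OO$, but Lemma~\ref{l:centralizer} only says such elements lie in the centralizer, not that every centralizer element has support in $\ker\OO$ (that requires integral domain stalks). The paper's argument works directly with the centralizer and does not pass through $\ker\OO$ at all.
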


\begin{proof}
The ``only if'' part is immediate. For the ``if'' part, we notice that since $\secring$ is a skew inverse semigroup ring by the results of Section \ref{sec:interplay}, we can use the second part of the proof of \cite[Theorem~3.4]{BGOR} to show that if $I$ is a non-zero ideal of $\secring$, then $I\cap C_{\secring}(\secringunit)\neq\{0\}$ from which the result follows.
\end{proof}

In the Hausdorff case, we can use Lemma \ref{l:centralizer} and Corollary \ref{cor:masa.Hausdorff} to obtain the following.

\begin{Cor}
Let $\grp$ be a Hausdorff ample groupoid and $\mathcal O$ a $\grp$-sheaf of integral domains such that the zero section is closed (eg. $\OO$ is a $\grp$-sheaf of fields). A ring homomorphism $\pi:\secring\to A$ is injective if and only if $\pi|_{\Gamma_c(\mathrm{Int}(\ker\OO),\OO)}$ is injective. If in addition $\mathrm{Int}(\ker\OO)=\grp^{(0)}$ (eg. $\grp$ is effective), $\pi$ is injective if and only if $\pi|_{\secringunit}$ is injective.
\end{Cor}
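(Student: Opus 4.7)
The plan is to combine the Generalized Uniqueness Theorem (Theorem~\ref{gut}) with the characterization of the centralizer from Lemma~\ref{l:centralizer} and the maximal commutativity criterion from Corollary~\ref{cor:masa.Hausdorff}. The Generalized Uniqueness Theorem reduces the question of injectivity of $\pi$ to injectivity on $C_{\secring}(\secringunit)$, so the main task is to identify this centralizer under the present hypotheses.

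First, I would verify that under the standing assumption that $\grp$ is Hausdorff and the zero section is closed, the first inclusion in Lemma~\ref{l:centralizer} becomes an equality, since any $f\in\secring$ is then continuous and $\supp(f)$ is open in $\ker\OO$, so $f\in \Gamma_c(\mathrm{Int}(\ker\OO),\OO)$. Next, since $\OO$ is a sheaf of integral domains, the second inclusion in Lemma~\ref{l:centralizer} is also an equality. Putting these together yields
\[
C_{\secring}(\secringunit)=\Gamma_c(\mathrm{Int}(\ker\OO),\OO).
\]
Applying Theorem~\ref{gut} immediately gives the first equivalence: $\pi$ is injective if and only if $\pi|_{\Gamma_c(\mathrm{Int}(\ker\OO),\OO)}$ is injective.

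For the second statement, under the additional hypothesis $\mathrm{Int}(\ker\OO)=\grp^{(0)}$ (which holds when $\grp$ is effective, since then $\mathrm{Int}(\mathrm{Iso}(\grp))=\grp^{(0)}$ and $\ker\OO\subseteq \mathrm{Iso}(\grp)$), we have $\Gamma_c(\mathrm{Int}(\ker\OO),\OO)=\secringunit$. Substituting into the first part yields the second equivalence.

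There is no real obstacle here; the work has all been done in the preceding results, and this corollary is essentially a bookkeeping consequence of Theorem~\ref{gut}, Lemma~\ref{l:centralizer}, and Corollary~\ref{cor:masa.Hausdorff}. The only point requiring a small remark is that ``$\OO$ a sheaf of fields'' indeed implies the zero section is closed, but this is exactly the content of Corollary~\ref{c:zero.sec.closed}.
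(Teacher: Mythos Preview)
Your proposal is correct and follows essentially the same route as the paper: the paper states just before the corollary that it follows from Lemma~\ref{l:centralizer} and Corollary~\ref{cor:masa.Hausdorff} (together with Theorem~\ref{gut}), and your argument fills in exactly these steps. One minor remark: you cite Corollary~\ref{cor:masa.Hausdorff} in your preamble but never actually invoke it, since identifying the centralizer via Lemma~\ref{l:centralizer} and then substituting $\mathrm{Int}(\ker\OO)=\grp^{(0)}$ directly already yields both parts.
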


\subsection{Primitivity of $\secring$}

In this section, we give some necessary conditions and some sufficient conditions for $\secring$ to be left primitive. Under certain hypotheses on the sheaf, the condition will be both necessary and sufficient. Recall that a ring is said to be \emph{left primitive} if it has a faithful simple left module.  The following theorem generalizes a result of~\cite{groupoidprimitive} for Steinberg algebras.

\begin{Thm}\label{t:main.primitive}
Let $\grp$ be an ample groupoid and $\mathcal O$ a $\mathscr G$-sheaf of rings.

\begin{enumerate}
    \item If $\secring$ is left primitive, then there is an orbit $\mathrm{Orb}(x)$ such that, for $f\in \secring$, $f|_{\dom\inv (\mathrm{Orb}(x))}=0$ implies $f=0$.
    \item Suppose that there is an orbit $\mathrm{Orb}(x)$ with $B_x$ left primitive and such that $f|_{\dom\inv (\mathrm{Orb}(x))}=0$ implies $f=0$ for $f\in \secring$. Then $\secring$ is left primitive.
\end{enumerate}
\end{Thm}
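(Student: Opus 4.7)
The plan is to prove both directions by reducing to the machinery already built up: Theorem~\ref{thm:primitive.ideal}, Theorem~\ref{thm:simple.module}, and the explicit annihilator criterion of Lemma~\ref{lem:ann.ind.module}. The key observation throughout is that the sum appearing in Lemma~\ref{lem:ann.ind.module} lives in the skew group ring $B_x=\mathcal O_x\rtimes \grp^x_x$, which decomposes as a direct sum $\bigoplus_{\delta\in\grp^x_x}\mathcal O_x\delta$, so vanishing of such a sum forces coefficient-wise vanishing.

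For part~(1), if $\secring$ is left primitive, the zero ideal is primitive, so by Theorem~\ref{thm:primitive.ideal} we can write $0=\Ann(\Ind_x(M))$ for some $x\in\grp^{(0)}$ and some $B_x$-module $M$. I claim the orbit $\mathrm{Orb}(x)$ witnesses the condition. Indeed, if $f\in\secring$ satisfies $f|_{\dom^{-1}(\mathrm{Orb}(x))}=0$, then for every $y,z\in\mathrm{Orb}(x)$ and every arrow $\zeta\colon y\to z$ we have $f(\zeta)=0$, so the sum $\sum_{\zeta\colon y\to z}\alpha_{\eta_z^{-1}}(f(\zeta))\eta_z^{-1}\zeta\eta_y$ is zero, hence lies in $\Ann(M)$. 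By Lemma~\ref{lem:ann.ind.module}, $f\in\Ann(\Ind_x(M))=0$.

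For part~(2), pick a faithful simple $B_x$-module $M$ using left primitivity of $B_x$. By Theorem~\ref{thm:simple.module}, $\Ind_x(M)$ is a simple $\secring$-module, so it suffices to show $\Ann(\Ind_x(M))=0$. Let $f\in\Ann(\Ind_x(M))$. By Lemma~\ref{lem:ann.ind.module}, for every $y,z\in\mathrm{Orb}(x)$ the element $\sum_{\zeta\colon y\to z}\alpha_{\eta_z^{-1}}(f(\zeta))\eta_z^{-1}\zeta\eta_y$ annihilates $M$, and hence is zero since $M$ is faithful. The map $\zeta\mapsto \eta_z^{-1}\zeta\eta_y$ is a bijection between the set of arrows $y\to z$ and $\grp^x_x$, so this sum has the form $\sum_{\delta\in\grp^x_x}a_\delta\delta$ in the skew group ring $B_x$; by the direct sum decomposition, each coefficient $a_\delta=\alpha_{\eta_z^{-1}}(f(\eta_z\delta\eta_y^{-1}))$ vanishes. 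Since $\alpha_{\eta_z^{-1}}$ is injective, $f(\zeta)=0$ for every $\zeta\colon y\to z$. Ranging over $y,z\in\mathrm{Orb}(x)$ and noting that $\dom(\zeta)\in\mathrm{Orb}(x)$ automatically forces $\ran(\zeta)\in\mathrm{Orb}(x)$, we conclude $f|_{\dom^{-1}(\mathrm{Orb}(x))}=0$, whence $f=0$ by hypothesis.

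The proof is mostly bookkeeping once Lemma~\ref{lem:ann.ind.module} and Theorems~\ref{thm:primitive.ideal} and~\ref{thm:simple.module} are in hand; the only mildly delicate point is the passage from vanishing of the sum $\sum_\zeta\alpha_{\eta_z^{-1}}(f(\zeta))\eta_z^{-1}\zeta\eta_y$ in $B_x$ to vanishing of each $f(\zeta)$, which hinges on the direct sum structure of the skew group ring and the bijection between arrows $y\to z$ and isotropy elements induced by the chosen representatives $\eta_y$ in~(\ref{basis}).
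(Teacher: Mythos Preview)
Your proof is correct and follows essentially the same strategy as the paper's. In part~(1) the paper verifies directly that $f$ annihilates $\LLL_x$ via the action formula \eqref{eq:LLL_x.left.module}, while you route through Lemma~\ref{lem:ann.ind.module}; in part~(2) the paper argues directly (pick $\gamma$ with $f(\gamma)\neq 0$, build the nonzero element $a\in B_x$, find $m$ with $am\neq 0$) whereas you phrase the same computation as its contrapositive via Lemma~\ref{lem:ann.ind.module}, but the essential content---exploiting the direct sum decomposition $B_x=\bigoplus_{\delta\in\grp^x_x}\mathcal O_x\delta$ to isolate individual values $f(\zeta)$---is identical.
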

\begin{proof}
Let $M$ be a faithful simple left $\secring$-module. 
There exists a $\grp$-sheaf of $\OO$-modules $\mathcal M$ such that $M=\secmod$ by the Disintegration Theorem. By Theorem \ref{thm:primitive.ideal}, there exists $x\in\grp^{(0)}$ such that $\{0\}=\Ann(M)=\Ann(\Ind_x(\mathcal M_x))$. We claim that $f|_{\dom\inv (\mathrm{Orb}(x))}=0$ implies $f=0$ for $f\in \secring$.  Indeed, $f$ annihilates $\LLL_x$ since if $\dom(\gamma)=x$ and $a\in \mathcal O_{\ran(\gamma)}$, then $f\cdot a1_\gamma = \sum_{\dom(\beta)=\ran(\gamma)} f(\beta)\alpha_{\beta}(a)1_{\beta\gamma}=0$ as $\ran(\gamma)\in \mathrm{Orb}(x)$.  Thus $f$ annihilates $M$. Since $\{0\}=\Ann(M)=\Ann(\Ind_x(\mathcal M_x))$, we conclude that $f=0$. 

For (2), let $M$ be a faithful simple left $B_x$-module and consider $N=\Ind_x(M)$ the induced module. That $N$ is a simple left $\secring$-module follows from Theorem \ref{thm:simple.module}.

We now prove that $N$ is faithful. For that, we fix $f\in\secring\setminus\{0\}$. By assumption, there exists $\gamma\in\supp(f)$ such that $\dom(\gamma)\in \mathrm{Orb}(x)$. We choose elements $\eta_y$ for $y\in \mathrm{Orb}(x)$ as per \eqref{basis}. Consider the following element of $B_x$:
\[a=\sum_{\zeta:\dom(\gamma)\to\ran(\gamma)}\alpha_{\eta_{\ran(\gamma)}\inv}(f(\zeta))\eta_{\ran(\gamma)}\inv\zeta\eta_{\dom(\gamma)},\]
and notice that $a\neq 0$, since the coefficient of $\eta_{\ran(\gamma)}\inv\gamma\eta_{\dom(\gamma)}$ is $\alpha_{\eta_{\ran(\gamma)}\inv}(f(\gamma))$, which is non-zero. Since $M$ is faithful, there exists $m\in M$ such that $a\cdot m\neq 0$. By \eqref{eq:Gamma.acts.induced.module}, the component of $f\cdot 1_{\eta_{\dom(\gamma)}}\otimes m$ corresponding to the coordinate given by $1_{\eta_{\ran(\gamma)}}$ is
\[
    \sum_{\zeta:\dom(\gamma)\to \ran(\gamma)}1_{\eta_{\ran(\gamma)}}\otimes\alpha_{\eta_{\ran(\gamma)}\inv}(f(\zeta))\eta_{\ran(\gamma)}\inv\zeta\eta_{\dom(\gamma)} \cdot m = 1_{\eta_{\ran(\gamma)}}\otimes a\cdot m\neq 0.
\]
Hence $f\cdot 1_{\eta_{\dom(\gamma)}}\otimes m\neq 0$, which proves the faithfulness of $N$.
\end{proof}

The first condition in Theorem~\ref{t:main.primitive} implies that if $\secring$ is left primitive, then $\grp$ has a dense orbit.

\begin{Cor}\label{c:dense.orbit}
Let $\mathscr G$ be an ample groupoid and $\mathcal O$ a $\mathscr G$-sheaf of rings. If $\secring$ is left primitive, then there exists $x\in\grp^{(0)}$ such that $\overline{\mathrm{Orb}(x)}=\grp^{(0)}$.
\end{Cor}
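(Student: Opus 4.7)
The plan is a contradiction argument that uses Theorem~\ref{t:main.primitive}(1) essentially as a black box. That theorem furnishes an orbit $\mathrm{Orb}(x) \subseteq \grp^{(0)}$ with the property that any $f \in \secring$ satisfying $f|_{\dom\inv(\mathrm{Orb}(x))} = 0$ is identically zero. I shall show that this $\mathrm{Orb}(x)$ must be dense in $\grp^{(0)}$.

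Suppose instead that $\overline{\mathrm{Orb}(x)} \neq \grp^{(0)}$. Then $V := \grp^{(0)} \setminus \overline{\mathrm{Orb}(x)}$ is a non-empty open set, and since $\grp$ is ample, the unit space has a basis of compact open sets, so one may choose a non-empty compact open $U \subseteq V$. Such a $U$ is automatically a compact open bisection and satisfies $U \cap \mathrm{Orb}(x) = \emptyset$. The key step is to produce a non-zero element $f \in \secring$ with $\supp(f) \subseteq U$.

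Granted such an $f$, the contradiction is immediate: for every $\gamma$ with $f(\gamma) \neq 0$ one has $\gamma \in U \subseteq \grp^{(0)}$, so $\gamma$ is a unit with $\dom(\gamma) = \gamma \in U$, which lies outside $\mathrm{Orb}(x)$. Hence $f|_{\dom\inv(\mathrm{Orb}(x))} = 0$ while $f \neq 0$, contradicting the defining property of $\mathrm{Orb}(x)$. In the typical situation where the unit section $z \mapsto 1_z$ is non-zero somewhere on $U$, one simply takes $f = \chi_U$. The only potentially delicate point is the degenerate case where the stalks $\mathcal{O}_z$ are trivial throughout $U$; this is handled by invoking the non-triviality of $\secring$ (which follows from left primitivity) to locate a point $z_0 \in \grp^{(0)}$ with $\mathcal{O}_{z_0} \neq 0$, applying Proposition~\ref{p:lots.of.secs} to obtain a compactly supported section with a prescribed non-zero value at $z_0$, and localizing by multiplication with $\chi_{U'}$ for a suitable compact open $U' \subseteq V$ containing a point of non-trivial stalk. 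The main obstacle, if any, is purely this localization bookkeeping; once the stalk non-triviality is controlled, the argument collapses to the $f = \chi_U$ case.
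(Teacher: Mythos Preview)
Your core argument---taking $f = \chi_U$ for a non-empty compact open $U \subseteq \grp^{(0)} \setminus \overline{\mathrm{Orb}(x)}$ and invoking Theorem~\ref{t:main.primitive}(1)---is correct and is exactly the paper's proof, merely recast as a contradiction rather than phrased directly. The paper writes essentially one line: for any non-empty compact open $U\subseteq\grp^{(0)}$ one has $\chi_U \neq 0$, so by the contrapositive of the property in Theorem~\ref{t:main.primitive}(1) some $y \in \mathrm{Orb}(x)$ satisfies $\chi_U(y) \neq 0$, i.e., $y \in U$.

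Your additional paragraph on the ``degenerate case'' of possibly trivial stalks is unnecessary under the standard convention that unital rings satisfy $1 \neq 0$, which the paper uses tacitly when it asserts $\chi_U \neq 0$. More to the point, your proposed fix has a genuine gap: you find some $z_0 \in \grp^{(0)}$ with $\mathcal{O}_{z_0} \neq 0$ and then seek a compact open $U' \subseteq V$ containing a point of non-trivial stalk, but nothing you have written forces such a point to lie in $V$; the $z_0$ you produced may well sit in $\overline{\mathrm{Orb}(x)}$. In fact this case cannot be repaired, because the corollary is false if zero stalks are permitted: take $\grp = \grp^{(0)} = \{0,1\}$ with the discrete topology and only identity arrows, $\mathcal{O}_0 = \mathbb{C}$, $\mathcal{O}_1 = 0$; then $\secring \cong \mathbb{C}$ is primitive while neither one-point orbit is dense. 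So the right move is to drop that paragraph and simply use $\chi_U$, as the paper does.
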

\begin{proof}
Let $\mathrm{Orb}(x)$ be as in Theorem~\ref{t:main.primitive}(1). If $\emptyset\neq U$ is a compact open subset of $\grp^{(0)}$, then $\chi_U\neq 0$ and so there exists $y\in \mathrm{Orb}(x)$ with $\chi_U(y)\neq 0$, that is, $y\in U$.  We conclude that $\mathrm{Orb}(x)$ is dense.
\end{proof}

\begin{Rmk}
Notice that, in the case of the constant sheaf (as in Example~\ref{ex:constant.sheaf}) the above result generalizes \cite[Proposition~4.9]{groupoidprimitive} to allow for usual groupoid algebras over possibly non-commutative rings (as opposed to fields).
\end{Rmk}

\begin{Cor}
Let $\mathscr G$ be a Hausorff ample groupoid and $\mathcal O$ a $\mathscr G$-sheaf of rings such that the zero section has closed image. If there exists $x\in\grp^{(0)}$ such that $\overline{\mathrm{Orb}(x)}=\grp^{(0)}$ and $B_x$ is left primitive, then $\secring$ is left primitive.
\end{Cor}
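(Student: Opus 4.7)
The plan is to deduce this corollary from part (2) of Theorem~\ref{t:main.primitive}. Since the hypotheses already give us that $B_x$ is left primitive and that $\mathrm{Orb}(x)$ is dense, it suffices to verify the remaining condition in Theorem~\ref{t:main.primitive}(2): that for every $f\in\secring$, if $f|_{\dom^{-1}(\mathrm{Orb}(x))}=0$, then $f=0$.

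To check this, I would argue by contrapositive. Suppose $f\in\secring$ is nonzero. Because $\grp$ is Hausdorff, by the characterization recalled in Section~\ref{s:sheaf.coeff}, $f$ is a genuine continuous function $f\colon\grp\to E$ with $p\circ f=\ran$. Using the hypothesis that the image of the zero section is closed in $E$, the set
\[W=\{\gamma\in\grp\mid f(\gamma)\neq 0_{\ran(\gamma)}\}=f^{-1}(E\setminus \text{image of zero section})\]
is open in $\grp$. Since $f\neq 0$, the set $W$ is a nonempty open subset of $\grp$.

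Next, I would push $W$ down to the unit space. Because $\grp$ is \'etale, the source map $\dom\colon\grp\to\grp^{(0)}$ is a local homeomorphism, hence an open map, so $\dom(W)$ is a nonempty open subset of $\grp^{(0)}$. By the density hypothesis $\overline{\mathrm{Orb}(x)}=\grp^{(0)}$, this open set must intersect $\mathrm{Orb}(x)$. Pick $\gamma_0\in W$ with $\dom(\gamma_0)\in\mathrm{Orb}(x)$; then $\gamma_0\in\dom^{-1}(\mathrm{Orb}(x))$ and $f(\gamma_0)\neq 0$, so $f|_{\dom^{-1}(\mathrm{Orb}(x))}\neq 0$. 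This is exactly the contrapositive of what we needed.

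The only mild subtlety is the use of the closedness of the zero section to guarantee that $\{\gamma:f(\gamma)\neq 0\}$ is open (rather than merely having open closure), which is what allows us to apply the open map $\dom$ to find a nonzero evaluation whose source lies in $\mathrm{Orb}(x)$; the rest is a direct invocation of Theorem~\ref{t:main.primitive}(2). No further obstacle arises, so the corollary follows.
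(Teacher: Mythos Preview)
Your proposal is correct and follows essentially the same argument as the paper's own proof: both verify the hypothesis of Theorem~\ref{t:main.primitive}(2) by noting that continuity of $f$ (from Hausdorffness) and closedness of the zero section make $\supp(f)$ open, so $\dom(\supp f)$ is a nonempty open set meeting the dense orbit $\mathrm{Orb}(x)$.
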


\begin{proof}
If $0\neq f\in \secring$ then, since $f$ is continuous and the image of the zero section is closed, $\supp f$ is open, whence $\dom(\supp f)$ is open.  By density of $\mathrm{Orb}(x)$, we conclude there exists $\gamma\in \dom\inv(\mathrm{Orb}(x))\cap \supp(f)$.  The result now follows from Theorem~\ref{t:main.primitive}(2).
\end{proof}

Next, we give necessary and sufficient conditions for $\secring$ to be left primitive in the case that $\mathcal O$ is a sheaf of fields and $\secringunit$ is a maximal commutative subring.

\begin{Thm}\label{t:max.abel.case}
Suppose that $\mathcal O$ is a $\grp$-sheaf of fields and $\secringunit$ is a maximal commutative subring.  Then $\secring$ is left primitive if and only if $\grp^{(0)}$ has a dense orbit.
\end{Thm}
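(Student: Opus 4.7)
The forward direction is essentially already in hand: Corollary~\ref{c:dense.orbit} shows any left primitive $\secring$ has a dense orbit, so the substance is the converse.

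Given a dense orbit $\mathrm{Orb}(x)$, my plan is to take the simplest possible candidate for a faithful simple module. Since $\OO_x$ is a field, viewing $\OO_x$ as a left $B_x$-module via~\eqref{eq:M_x.left.B_x.module} makes it simple already as an $\OO_x$-module, hence as a $B_x$-module. Then Theorem~\ref{thm:simple.module} gives that $M := \Ind_x(\OO_x)$ is a simple left $\secring$-module, and the entire task reduces to showing $I := \Ann(M)$ is zero. Here is where the two hypotheses conspire: the Generalized Uniqueness Theorem~\ref{gut}, combined with $C_{\secring}(\secringunit) = \secringunit$ (from maximal commutativity), tells us that $I = 0$ if and only if $I \cap \secringunit = 0$.

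So I would take $g \in I \cap \secringunit$ and, for each $y \in \mathrm{Orb}(x)$, compute $g \cdot (1_{\eta_y} \otimes 1)$ in $M$ using~\eqref{eq:Gamma.acts.induced.module}. Because $g$ is supported on $\grp^{(0)}$, only the term with $\zeta = y$ survives, and the direct-sum decomposition~\eqref{eq:decomposition.ind.M} together with the bijectivity of $\alpha_{\eta_y^{-1}}$ forces $g(y) = 0$ for every $y \in \mathrm{Orb}(x)$. Finally, since $\OO$ is a sheaf of fields, Corollary~\ref{c:zero.sec.closed} makes the zero section closed, so $\supp(g)$ is open in $\grp^{(0)}$; an open subset of $\grp^{(0)}$ disjoint from a dense subset is empty, giving $g = 0$.

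The main conceptual point, and the reason I prefer this route over trying to verify Theorem~\ref{t:main.primitive}(2) directly, is that left primitivity of $B_x$ itself would require the $\grp^x_x$-action on $\OO_x$ to be faithful, i.e.\ $\grp^x_x \cap \ker \OO = \{x\}$. That property is delicate and does not obviously follow from maximal commutativity of $\secringunit$ alone (in particular, for the constant sheaf $\Delta(K)$ the action on each stalk is trivial, so a direct approach via faithful simple $B_x$-modules would reduce to a nontrivial question about group algebras). The Generalized Uniqueness Theorem bypasses the issue entirely by letting us test annihilation on the diagonal rather than chasing faithful simples of the isotropy skew group ring.
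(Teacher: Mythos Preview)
Your proof is correct and follows essentially the same route as the paper's. Both arguments reduce faithfulness of the induced module to showing $I\cap\secringunit=0$: the paper invokes \cite[Theorem~3.4]{BGOR} directly, while you route through the Generalized Uniqueness Theorem~\ref{gut} (whose proof rests on the same citation), and both then exploit continuity of diagonal sections, the closed zero section, and density of $\mathrm{Orb}(x)$ to conclude. The only cosmetic differences are that the paper uses an \emph{arbitrary} simple $B_x$-module and conjugates by a bisection to reduce to $s(x)\neq 0$, whereas you pick the concrete module $\OO_x$ and instead test the action at every $y\in\mathrm{Orb}(x)$; your closing remark about why one should not expect $B_x$ itself to be left primitive is apt and explains why neither argument attempts to go through Theorem~\ref{t:main.primitive}(2).
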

\begin{proof}
Corollary~\ref{c:dense.orbit} shows that the condition is necessary.  For the converse, let $x$ have a dense orbit, $B_x$ be as usual and let $M$ be any simple $B_x$-module (since $B_x$ is unital, these exist).  We verify that $\Ind_x(M)$ is a faithful left $\secring$-module.   It is simple by Theorem~\ref{thm:simple.module}.  

Let $I$ be the annihilator of $\Ind_x(M)$.  Recall from~\cite{BenDan} that $\secring$ is a skew inverse semigroup ring $\secringunit\rtimes S$ for an appropriate inverse semigroup $S$.  By~\cite[Theorem~3.4]{BGOR}, since $\secringunit$ is maximal commutative, if $I\neq 0$, then there exists $0\neq s\in I\cap \secringunit$.  Since $s$ is continuous and the orbit of $x$ is dense, $0\neq s(y)$ for some $y\in \mathrm{Orb}(x)$.   Let $\gamma\colon x\to y$ and let $U$ be a compact open bisection containing $\gamma$.  Then $\chi_{U\inv}\ast s\ast \chi_U\in I\cap \secringunit$ and $\chi_{U\inv}\ast s\ast \chi_U(x) =  \alpha_{\gamma\inv}(s(y))\neq 0$.  Thus without loss of generality, we may assume that $s(x)\neq 0$.  Since $s(x)$ is a unit of $B_x$, $s(x)m\neq 0$ for all $m\in M\setminus \{0\}$.  So if $0\neq m\in M$, then $s\cdot x\otimes m = s(x)x\otimes m =x\alpha_x(s(x))\otimes m= xs(x)\otimes m= x\otimes s(x)m\neq 0$, contradicting that $s\in I$.  Thus $I=0$.  This completes the proof.
\end{proof}

\begin{Cor}\label{azul}
Let $\grp$ be a Hausdorff ample groupoid and $\mathcal O$ a $\grp$-sheaf of fields.  Assume that $\mathrm{Int}(\ker \mathcal O)=\grp^{(0)}$, e.g., if $\grp$ is effective.  Then, $\secring$ is left primitive if and only if $\grp^{(0)}$ has a dense orbit.  
\end{Cor}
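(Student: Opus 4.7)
The plan is to derive Corollary~\ref{azul} directly from Theorem~\ref{t:max.abel.case}. Inspecting that theorem, the only hypothesis that is not already explicit in Corollary~\ref{azul} is maximal commutativity of $\secringunit$ inside $\secring$. So the proof reduces to verifying this maximal commutativity under the assumptions on $\mathcal O$ and $\mathrm{Int}(\ker\mathcal O)$.

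First I would invoke Corollary~\ref{c:zero.sec.closed}: because $\mathcal O$ is a $\grp$-sheaf of fields, its zero section has closed image. Since fields are in particular integral domains and $\grp$ is Hausdorff ample, this puts us exactly in the setting of Corollary~\ref{cor:masa.Hausdorff}, which characterizes maximal commutativity of $\secringunit$ by the equality $\mathrm{Int}(\ker\mathcal O)=\grp^{(0)}$. Since that equality is part of our hypothesis, $\secringunit$ is maximal commutative inside $\secring$.

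With this in hand, the hypotheses of Theorem~\ref{t:max.abel.case} are fully met: $\mathcal O$ is a sheaf of fields and $\secringunit$ is maximal commutative. Applying that theorem yields the desired equivalence, namely that $\secring$ is left primitive if and only if $\grp^{(0)}$ has a dense orbit.

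Finally, I would justify the parenthetical ``e.g., if $\grp$ is effective.'' Here one just observes the two-sided inclusion $\grp^{(0)}\subseteq \mathrm{Int}(\ker\mathcal O)\subseteq \mathrm{Int}(\mathrm{Iso}(\grp))$: the left inclusion holds because $\alpha_x$ is the identity on $\mathcal O_x$ for $x\in \grp^{(0)}$ and $\grp^{(0)}$ is open in $\grp$, while the right inclusion holds because $\ker\mathcal O\subseteq\mathrm{Iso}(\grp)$. If $\grp$ is effective, the outer terms coincide, forcing $\mathrm{Int}(\ker\mathcal O)=\grp^{(0)}$. There is no real obstacle in this argument; it is essentially a bookkeeping assembly of Corollary~\ref{c:zero.sec.closed}, Corollary~\ref{cor:masa.Hausdorff}, and Theorem~\ref{t:max.abel.case}, and the only subtlety worth spelling out is the comparison of $\ker\mathcal O$ with $\mathrm{Iso}(\grp)$ in the effective case.
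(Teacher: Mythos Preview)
Your proposal is correct and matches the paper's intended argument: the paper states Corollary~\ref{azul} without proof, immediately after Theorem~\ref{t:max.abel.case}, so it is meant to follow exactly as you describe, by combining Corollary~\ref{cor:masa.Hausdorff} (which uses Corollary~\ref{c:zero.sec.closed}) with Theorem~\ref{t:max.abel.case}. Your additional justification of the ``e.g., if $\grp$ is effective'' clause via the chain $\grp^{(0)}\subseteq \mathrm{Int}(\ker\mathcal O)\subseteq \mathrm{Int}(\mathrm{Iso}(\grp))$ is a welcome detail that the paper leaves implicit.
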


\subsection{Semiprimitivity}
Recall that a ring is \emph{semiprimitive} if its Jacobson radical is zero.  Equivalently, it is semiprimitive if it has a faithful semisimple module.  There are many open questions about the semiprimitivity of group rings and skew group rings, in particular, it is still unknown if a group algebra over a field of characteristic $0$ is necessarily semiprimitive. So we shall endeavor to understand things modulo this situation.  The results of this section generalize the semiprimitivity results for Steinberg algebras from~\cite{groupoidprimitive}.

Recall that a subset $D\scj \grp^{(0)}$ is \emph{invariant} if for all $\gamma\in\grp$, $\dom(\gamma)\in D$ if and only if $\ran(\gamma)\in D$. If $X$ is an invariant subset of $\grp^{(0)}$, then $\grp|_X$ will denote the restriction of $\grp$ to $X$. 

\begin{Thm}\label{t:main.semiprimitive}
Let $\grp$ be an ample groupoid and $\mathcal O$ a $\mathscr G$-sheaf of rings.  
Suppose that there is an invariant subset $X$ with $B_x$ semiprimitive for some $x$ in each orbit of $X$ and such that $f|_{\grp|_X}=0$ implies $f=0$ for $f\in \secring$. Then $\secring$ is semiprimitive.
\end{Thm}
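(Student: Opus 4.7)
The plan is to show $J(\secring) = 0$ by exhibiting a family of simple $\secring$-modules whose common annihilator is contained in $\{f \in \secring : f|_{\grp|_X} = 0\}$; once this is done, the standing hypothesis forces any element of $J(\secring)$ to be zero. The natural candidates are the induced modules $\Ind_x(M)$, which are simple by Theorem~\ref{thm:simple.module} whenever $M$ is a simple left $B_x$-module, and the semiprimitivity of $B_x$ guarantees that enough simple $B_x$-modules exist to have trivial common annihilator.

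More precisely, I would choose, for each orbit $O$ of $X$, a representative $x_O \in O$ with $B_{x_O}$ semiprimitive, and consider the family of all modules $\Ind_{x_O}(M)$ where $M$ ranges over simple left $B_{x_O}$-modules. Fix $f \in J(\secring)$. For an arbitrary $\gamma \in \grp|_X$ with $\dom(\gamma) = y$ and $\ran(\gamma) = z$, invariance of $X$ places $y$ and $z$ in a common orbit $O$; writing $x = x_O$, both $y$ and $z$ lie in $\mathrm{Orb}(x)$. Applying Lemma~\ref{lem:ann.ind.module} at the pair $(y,z)$ shows that
\[
a := \sum_{\zeta : y \to z} \alpha_{\eta_z^{-1}}(f(\zeta))\, \eta_z^{-1}\zeta\eta_y
\]
lies in $\Ann(M)$ for every simple $B_x$-module $M$; hence $a \in J(B_x) = 0$. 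Because $\zeta \mapsto \eta_z^{-1}\zeta\eta_y$ is a bijection from $\{\zeta : y \to z\}$ onto $\grp^x_x$, the displayed sum is a linear combination of pairwise distinct elements of $\grp^x_x$ in the direct sum decomposition $B_x = \bigoplus_{\delta \in \grp^x_x} \mathcal O_x \delta$ of the skew group ring; therefore each coefficient must vanish, and in particular $\alpha_{\eta_z^{-1}}(f(\gamma)) = 0$. Since $\alpha_{\eta_z^{-1}}$ is a ring isomorphism, $f(\gamma) = 0$. As $\gamma \in \grp|_X$ was arbitrary, $f|_{\grp|_X} = 0$, and the hypothesis yields $f = 0$.

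The argument is largely bookkeeping; the two points requiring a brief check are the identification of $J(B_x)$ with the intersection of annihilators of simple $B_x$-modules (valid because $B_x$ is unital) and the bijectivity of the reindexing $\zeta \mapsto \eta_z^{-1}\zeta\eta_y$, which is what permits reading off coefficients in the skew group ring individually. No substantive obstacle is anticipated beyond these routine verifications.
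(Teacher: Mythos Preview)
Your proposal is correct and takes essentially the same approach as the paper. The paper packages the argument by constructing a single faithful semisimple module $N=\bigoplus_{x\in T}\Ind_x(M_x)$ (with $M_x$ a faithful semisimple $B_x$-module) and showing directly that $f\neq 0$ implies $f\cdot N\neq 0$, whereas you phrase it dually via the Jacobson radical; but the core computation---forming the element $a\in B_x$ from the $\zeta:y\to z$ sum and reading off the coefficient of $\eta_z^{-1}\gamma\eta_y$ in the direct sum decomposition of the skew group ring---is identical in both.
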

\begin{proof}
Let $T$ be a set of orbit representatives of $X$ with $B_x$ semiprimitive for all $x\in T$. Let $M_x$ be a faithful left semisimple $B_x$-module and consider $N=\bigoplus_{x\in T}\Ind_x(M_x)$. Then $N$ is a semisimple left $\secring$-module by Theorem \ref{thm:simple.module}.

We now prove that $N$ is faithful. For that, we fix $f\in\secring\setminus\{0\}$. By assumption, there exists $\gamma\in\grp|_X$ such that $f(\gamma)\neq 0$.  Say that $\dom(\gamma)\in \mathrm{Orb}(x)$ with $x\in T$. Again, we use the notation of \eqref{basis}. Consider the following element of $B_x$:
\[a=\sum_{\zeta:\dom(\gamma)\to\ran(\gamma)}\alpha_{\eta_{\ran(\gamma)}\inv}(f(\zeta))\eta_{\ran(\gamma)}\inv\zeta\eta_{\dom(\gamma)},\]
and notice that $a\neq 0$, since the coefficient of $\eta_{\ran(\gamma)}\inv\gamma\eta_{\dom(\gamma)}$ is $\alpha_{\eta_{\ran(\gamma)}\inv}(f(\gamma))$, which is non-zero. Since $M_x$ is faithful, there exists $m\in M_x$ such that $a\cdot m\neq 0$. By \eqref{eq:Gamma.acts.induced.module}, the component of $f\cdot 1_{\eta_{\dom(\gamma)}}\otimes m$ corresponding to the coordinate given by $1_{\eta_{\ran(\gamma)}}$ is
\[
    \sum_{\zeta:\dom(\gamma)\to \ran(\gamma)}1_{\eta_{\ran(\gamma)}}\otimes\alpha_{\eta_{\ran(\gamma)}\inv}(f(\zeta))\eta_{\ran(\gamma)}\inv\zeta\eta_{\dom(\gamma)} \cdot m = 1_{\eta_{\ran(\gamma)}}\otimes a\cdot m\neq 0.
\]
Hence $f\cdot 1_{\eta_{\dom(\gamma)}}\otimes m\neq 0$, which proves the faithfulness of $N$.
\end{proof}
 
\begin{Cor}
Let $\mathscr G$ be a Hausorff ample groupoid and $\mathcal O$ a $\mathscr G$-sheaf of rings such that the zero section has closed image (e.g., a sheaf of fields). If there exists a dense invariant subset $X\subseteq \grp^{(0)}$ with  $B_x$ semiprimitive for some $x$ in each orbit, then $\secring$ is semiprimitive.
\end{Cor}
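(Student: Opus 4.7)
My plan is to derive this corollary directly from Theorem~\ref{t:main.semiprimitive} applied to the given dense invariant subset $X$. Among the hypotheses of that theorem, the requirement that $B_x$ is semiprimitive for some $x$ in each orbit of $X$ is assumed verbatim, so the only thing left to verify is the support-dichotomy condition: that $f|_{\grp|_X}=0$ implies $f=0$ for every $f\in\secring$. Once this is in hand, semiprimitivity is immediate from Theorem~\ref{t:main.semiprimitive}.

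To verify the support-dichotomy, I would first invoke Hausdorffness of $\grp$, which (as recorded in the passage following Equation~\eqref{eq:define.conv}) guarantees that every $f\in\secring$ is continuous as a map $\grp\skel 1\to E$. Combined with the hypothesis that the zero section of $\mathcal O$ has closed image, this forces $\supp(f)$ to be an open subset of $\grp\skel 1$ for each $f\in\secring$.

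Next I would show that $\grp|_X$ is dense in $\grp$. Invariance of $X$ gives $\grp|_X=\dom\inv(X)$, and since $\dom\colon\grp\to\grp\skel 0$ is a local homeomorphism, it is in particular an open map. So for any non-empty open $V\scj\grp$, the set $\dom(V)$ is open and non-empty in $\grp\skel 0$; density of $X$ then yields $\dom(V)\cap X\neq\emptyset$, which is exactly $V\cap\grp|_X\neq\emptyset$. Combining this with the previous paragraph: if $f$ vanishes on $\grp|_X$, then $\supp(f)$ is an open set disjoint from the dense set $\grp|_X$, forcing $\supp(f)=\emptyset$ and hence $f=0$.

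I do not expect a genuinely hard step here; the conceptual heavy lifting was packaged into Theorem~\ref{t:main.semiprimitive}. The one point where I would slow down is the density of $\grp|_X$, since it crucially uses openness of $\dom$ (not merely continuity) — this is exactly why the hypothesis that $\grp$ is \emph{ample} (whence $\dom$ is a local homeomorphism) is the right one. Everything else is a direct translation of hypotheses into the form needed to invoke Theorem~\ref{t:main.semiprimitive}.
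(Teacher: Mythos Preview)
Your proposal is correct and follows essentially the same approach as the paper's proof: both use Hausdorffness to get continuity of $f$, the closed zero section to make $\supp(f)$ open, and then openness of $\dom$ together with density of $X$ to produce a $\gamma\in\supp(f)\cap\grp|_X$, after which Theorem~\ref{t:main.semiprimitive} applies. The only cosmetic difference is that the paper phrases the last step as ``$\dom(\supp f)$ is open and meets $X$,'' while you equivalently phrase it as ``$\grp|_X=\dom\inv(X)$ is dense in $\grp$.''
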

\begin{proof}
If $0\neq f\in \secring$ then, since $f$ is continuous and the image of the zero section is closed, $\supp f$ is open, whence $\dom(\supp f)$ is open.  By density of $X$, we conclude there exists $\gamma\in\grp|_X\cap \supp(f)$.  The result now follows from Theorem~\ref{t:main.semiprimitive}.
\end{proof}

\begin{Thm}\label{tsemiprimitivity}
Suppose that $\mathcal O$ is a $\grp$-sheaf of fields and $\secringunit$ is a maximal commutative subring.  Then $\secring$ is semiprimitive.
\end{Thm}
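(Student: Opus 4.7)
The plan is to assume $J(\secring) \neq 0$ and derive a contradiction. Since Theorem \ref{siri} identifies $\secring$ with the skew inverse semigroup ring $\secringunit \rtimes \grp^a$ and $\secringunit$ is maximal commutative by hypothesis, the same consequence of \cite[Theorem~3.4]{BGOR} that was exploited in the proof of Theorem \ref{t:max.abel.case} applies: every non-zero ideal of $\secring$ meets $\secringunit$ non-trivially. I would therefore choose $0 \neq s \in J(\secring) \cap \secringunit$ together with a point $y \in \grp^{(0)}$ at which $s(y) \neq 0$.

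Next I would use the sheaf-of-fields hypothesis to invert $s$ locally. Because $\grp^{(0)}$ is Hausdorff, $s$ is a genuine continuous section with compact support; the value $s(y)$ lies in $\mathcal{O}_y^\times$ (as $\mathcal{O}_y$ is a field); and Lemma \ref{units:open} provides that $\mathcal{O}^\times$ is open in the sheaf space with continuous inversion. Picking a compact open neighborhood $U \subseteq \grp^{(0)}$ of $y$ on which $s$ takes values in $\mathcal{O}^\times$, I would define $s' \in \secringunit$ by $s'(z) = s(z)^{-1}$ on $U$ and $s'(z) = 0$ off $U$. Clopen-ness of $U$ in $\grp^{(0)}$ makes this extension by zero continuous, so $s'$ is indeed a compactly supported continuous section. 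A direct computation gives $s \ast s' = \chi_U$, and since $s \in J(\secring)$ and $J(\secring)$ is an ideal, we conclude $\chi_U \in J(\secring)$.

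Finally I would invoke the general fact that no non-zero idempotent lies in the Jacobson radical of any ring: if $\chi_U$ were quasi-regular in the unitization $\secring^+$, then $1 - \chi_U$ would have to be a two-sided unit, but $(1-\chi_U)^2 = 1 - \chi_U$ is idempotent, and so its invertibility forces $1 - \chi_U = 1$, i.e.\ $\chi_U = 0$. Since $U$ is non-empty, this is a contradiction, and hence $J(\secring) = 0$.

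The only step with any real content is the sheaf-theoretic construction of the local inverse section $s'$ and the verification of the identity $s \ast s' = \chi_U$; everything else is formal. The hypothesis that $\mathcal O$ is a sheaf of \emph{fields} (rather than merely of integral domains) is precisely what makes $s(y) \neq 0$ imply local invertibility of $s$ via Lemma \ref{units:open}, so the proof would break without it.
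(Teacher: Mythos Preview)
Your proof is correct but follows a genuinely different route from the paper's. Both arguments start the same way: invoking the skew inverse semigroup ring description (Theorem~\ref{siri}) together with \cite[Theorem~3.4]{BGOR} to produce a nonzero $s\in J(\secring)\cap\secringunit$ (in your case) or $s\in I\cap\secringunit$ for the annihilator $I$ of a candidate faithful semisimple module (in the paper's case). From there the two proofs diverge. The paper works on the module side: it fixes, for each $x\in\grp^{(0)}$, a simple $B_x$-module $M_x$, forms $M=\bigoplus_x\Ind_x(M_x)$ (semisimple by Theorem~\ref{thm:simple.module}), and uses that $s(x)\neq 0$ means $s(x)$ is a unit of $B_x$ to show directly that $s\cdot(x\otimes m)=x\otimes s(x)m\neq 0$, contradicting $s\in\Ann(M)$. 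You instead work entirely inside the ring: invert $s$ locally on a compact open $U$ via Lemma~\ref{units:open} to obtain $s\ast s'=\chi_U\in J(\secring)$, then use the standard fact that $J(R)$ contains no nonzero idempotent.

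Your argument is more elementary in that it avoids the induction functor altogether; it is essentially the same local-inversion trick used in the proofs of Proposition~\ref{p:vnr} and Corollary~\ref{c:minimal}, followed by a one-line ring-theoretic observation. The paper's approach, by contrast, exhibits an explicit faithful semisimple module and thereby keeps the result within the induced-module framework developed in Section~\ref{induced}. Both arguments use the sheaf-of-fields hypothesis at the same point (to pass from $s(y)\neq 0$ to invertibility), so neither extends further than the other in that respect.
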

\begin{proof}
For $x\in \grp^{(0)}$, let $B_x$ be the skew group ring, as usual, and let $M_x$ be any simple $B_x$-module (since $B_x$ is unital, these exist).  We verify that $M=\bigoplus_{x\in \grp^{(0)}}\Ind_X(M_x)$ is a faithful semisimple left $\secring$-module.   It is semisimple by Theorem~\ref{thm:simple.module}.  

Let $I$ be the annihilator of $M$.  Recall from~\cite{BenDan} that $\secring$ is a skew inverse semigroup ring $\secringunit\rtimes S$ for a suitably chosen inverse semigroup $S$.  By~\cite[Theorem~3.4]{BGOR}, since $\secringunit$ is maximal commutative, if $I\neq 0$, then there exists $0\neq s\in I\cap \secringunit$.  Suppose that $s(x)\neq 0$ with $x\in \grp^{(0)}$.  Since $s(x)$ is a unit of $B_x$, $s(x)m\neq 0$ for all $m\in M_x\setminus \{0\}$.  So if $0\neq m\in M_x$, then $s\cdot x\otimes m = s(x)x\otimes m =x\alpha_x(s(x))\otimes m= xs(x)\otimes m= x\otimes s(x)m\neq 0$, contradicting that $s\in I$.  Thus $I=0$.  This completes the proof.
\end{proof}

\begin{Cor}\label{esqui}
Let $\grp$ be a Hausdorff ample groupoid and $\mathcal O$ a $\grp$-sheaf of fields.  Assume that $\mathrm{Int}(\ker \mathcal O)=\grp^{(0)}$, e.g., if $\grp$ is effective.  Then $\secring$ is semiprimitive.  
\end{Cor}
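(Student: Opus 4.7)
The plan is to derive Corollary~\ref{esqui} as a direct application of Theorem~\ref{tsemiprimitivity}. That theorem already gives semiprimitivity of $\secring$ whenever $\mathcal O$ is a $\grp$-sheaf of fields and $\secringunit$ is a maximal commutative subring of $\secring$. Since the sheaf-of-fields hypothesis is already assumed in the corollary, the whole task reduces to showing maximal commutativity of $\secringunit$ under the remaining hypotheses.

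For that, I would invoke Corollary~\ref{cor:masa.Hausdorff}. Its hypotheses are: $\grp$ Hausdorff ample (given), and $\mathcal O$ a $\grp$-sheaf of integral domains whose zero section has closed image. Fields are integral domains, and by Corollary~\ref{c:zero.sec.closed} a $\grp$-sheaf of fields automatically has the zero section closed, so both conditions are satisfied. Corollary~\ref{cor:masa.Hausdorff} then states that $\secringunit$ is maximal commutative in $\secring$ if and only if $\mathrm{Int}(\ker \mathcal O) = \grp^{(0)}$, which is exactly the remaining assumption of the corollary we wish to prove. Combining, $\secringunit$ is maximal commutative and Theorem~\ref{tsemiprimitivity} immediately yields that $\secring$ is semiprimitive.

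The parenthetical ``e.g., if $\grp$ is effective'' is not part of the proof proper, but it is worth recording why effectiveness is a sufficient condition: by definition $\ker \mathcal O \subseteq \mathrm{Iso}(\grp)$, so $\mathrm{Int}(\ker \mathcal O) \subseteq \mathrm{Int}(\mathrm{Iso}(\grp)) = \grp^{(0)}$ when $\grp$ is effective; and conversely $\grp^{(0)} \subseteq \ker \mathcal O$ always holds (the unit acts as the identity on each stalk), while $\grp^{(0)}$ is open in $\grp$ since $\grp$ is \'etale, hence $\grp^{(0)} \subseteq \mathrm{Int}(\ker \mathcal O)$. Thus effectiveness forces $\mathrm{Int}(\ker \mathcal O) = \grp^{(0)}$.

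There is no real obstacle here; the work was done in Theorems~\ref{tsemiprimitivity} and in Corollaries~\ref{c:zero.sec.closed} and~\ref{cor:masa.Hausdorff}, and the present corollary is just the packaging of those results into a cleanly stated sufficient condition in the Hausdorff sheaf-of-fields setting.
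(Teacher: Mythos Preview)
Your proposal is correct and follows exactly the approach the paper intends: the corollary is stated immediately after Theorem~\ref{tsemiprimitivity} without proof precisely because it follows from that theorem together with Corollary~\ref{cor:masa.Hausdorff} (which already notes that sheaves of fields have closed zero section). Your additional explanation of why effectiveness implies $\mathrm{Int}(\ker\mathcal O)=\grp^{(0)}$ is also accurate and matches the ``in particular'' clause of Corollary~\ref{cor:masa.Hausdorff}.
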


\subsection{Simplicity}

Under the hypothesis that we have a sheaf of fields, we give necessary and sufficient conditions for the ring $\secring$ to be simple.

 We say that $\grp$ is \emph{minimal} if the only open invariant subsets of $\grp^{(0)}$ are $\{0\}$ and $\grp^{(0)}$. For the action of $\grp^a$ on $\secringunit$, given in Section~\ref{sec:interplay}, we say that an ideal $I$ of $\secringunit$ is \emph{$\grp^a$-invariant} if for all $s\in\grp^a$, $\til \alpha_s(D_s^*\cap I)\scj I$. Finally, $\secringunit$ is \emph{$\grp^a$-simple} if the only $\grp^a$-invariant ideals of $\secringunit$ are $\{0\}$ and $\secringunit$. 

We aim to show that minimality is equivalent to $\grp^a$-simplicity for a large class of $\mathscr G$-sheaves of commutative rings. To see that some hypothesis on $\mathcal O$ is needed, let $\grp$ be any ample groupoid and $R$ a commutative ring that is not a field.  Let $\mathcal O$ be the constant sheaf associated with $R$.  Let $I$ be a nonzero proper ideal in $R$.  Then the set of functions in  $\secringunit$ that take values in $I$ is a nonzero, proper $\grp^a$-invariant ideal that contains no characteristic function $\chi_V$ with $V\neq \emptyset$. So minimality does not imply $\grp^a$-simplicity in this case.

In what follows, for $\grp$ an ample groupoid, $\mathcal O$ a $\grp$-sheaf of rings and $U\scj\grp^{(0)}$ open, we define the ideal $I_U=\{f\in\secringunit\mid\supp f\scj U\}$.

\begin{Lemma}\label{l:ideal.open.set}
Let $\grp$ be an ample groupoid and $\mathcal O$ a $\grp$-sheaf of rings.
\begin{enumerate}
    \item\label{i:open1} If $U\scj \grp^{(0)}$ is open, then $I_U$ is generated by $\chi_V$ with $V\scj U$ compact-open.
    \item\label{i:open2} If $I$ is an ideal of $\secringunit$ and $U=\bigcup_{\chi_V\in I}V$, then $I_U$ is generated by $\chi_V$ such that $\chi_V\in I$. In particular, $I_U\scj I$.
\end{enumerate}
\end{Lemma}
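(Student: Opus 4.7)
The plan is to exploit three facts: (i) the multiplication in $\secringunit$ is pointwise, since for $f,g$ supported on units we have $f\ast g(x)=f(x)g(x)$ (the only factorization $\alpha\beta=x$ with $\alpha,\beta\in\grp^{(0)}$ has $\alpha=\beta=x$ and $\alpha_x=\mathrm{id}$); (ii) for any $f\in\secringunit$, the support $\{x:f(x)\neq 0_x\}$ is closed in $\grp^{(0)}$, because in the étale space $E$ local sections that agree at a point agree on a neighborhood, so the zero locus $\{x:f(x)=0_x\}$ is open; and (iii) for such $f$ the support is also compact, and since $\grp^{(0)}$ has a basis of compact open sets, any compact set contained in an open $U$ is contained in some compact open $W\subseteq U$ (cover the compact set by basic compact opens sitting inside $U$ and take the finite union).

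For part \eqref{i:open1}, take $f\in I_U$. By (ii) and (iii), choose a compact open $W$ with $\supp f\subseteq W\subseteq U$. Using (i), $(f\ast\chi_W)(x)=f(x)\chi_W(x)$ equals $f(x)$ on $W$ (where $\chi_W(x)=1_x$) and $0$ off $W$; combined with $\supp f\subseteq W$, this gives $f=f\ast\chi_W$. Hence $f$ lies in the (two-sided) ideal generated by $\chi_W$ with $W\subseteq U$ compact open, as required.

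For part \eqref{i:open2}, let $f\in I_U$ and again invoke compactness: $\supp f$ is a compact subset of $U=\bigcup_{\chi_V\in I}V$, so it is covered by finitely many compact opens $V_1,\dots,V_n$ with $\chi_{V_i}\in I$. I will then show $\chi_{V_1\cup\cdots\cup V_n}\in I$ by induction on $n$, using the identity $\chi_{A\cup B}=\chi_A+\chi_B-\chi_A\ast\chi_B$ (which follows from (i) together with $\chi_A\ast\chi_B=\chi_{A\cap B}$); since $I$ is an ideal, products and sums of elements of $I$ lie in $I$. Setting $W=V_1\cup\cdots\cup V_n$, we have $\chi_W\in I$, $W$ is compact open, and $\supp f\subseteq W$, so the argument of part \eqref{i:open1} gives $f=f\ast\chi_W$, which lies in the ideal generated by the $\chi_V\in I$ and in particular in $I$.

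There is no real obstacle; the only delicate point is confirming that the support of a compactly supported section is closed in the non-Hausdorff arrow space setting, but for part \eqref{i:open2} we only need this on $\grp^{(0)}$, which is Hausdorff, and the étale argument above handles it cleanly.
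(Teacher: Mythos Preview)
Your proof is correct and follows essentially the same approach as the paper: both arguments use that $\supp(f)$ is compact and contained in $U$, find a compact open $W$ with $\supp(f)\subseteq W\subseteq U$, and write $f=f\ast\chi_W$; for part~\eqref{i:open2} both use inclusion--exclusion to show finite unions of $V$'s with $\chi_V\in I$ again have characteristic function in $I$. The only cosmetic difference is that the paper first proves the slightly stronger intermediate claim that $\chi_W\in I$ for \emph{every} compact open $W\subseteq U$ (via $\chi_W=\chi_W\chi_{W_1\cup\cdots\cup W_n}$) and then invokes part~\eqref{i:open1}, whereas you produce one specific $W$ per $f$; both routes yield the statement immediately.
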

\begin{proof}
\eqref{i:open1} Let $f\in I_U$. Since $\supp(f)$ is compact and $\grp^{(0)}$ has a basis of compact opens, we can find a compact open set $V$ with $\supp(f)\subseteq V\subseteq U$ (choose a compact open neighborhood of each $x\in \supp(f)$ contained in $U$ and take the union of a finite subcover).  Then $f=f\chi_V$ and so $I_U$ is indeed generated by the $\chi_V$ with $V\subseteq U$ compact open.

\eqref{i:open2} Notice that if $V,W$ are such that $\chi_V,\chi_W\in I$, then $\chi_{V\cup W}$, $\chi_{V\setminus W}$ and $\chi_{V\cap W}$ are all in $I$. Also, if $\chi_V\in I$ and $W\subseteq V$ is compact open, then $\chi_W = \chi_W\chi_V\in I$.  It now follows that if $W\subseteq U$ is compact-open, then $W\subseteq W_1\cup\cdots\cup W_n$ with $W_i$ such $\chi_{W_i}\in I$ for all $i=1,\ldots,n$, so that $\chi_W\in I$. It follows that $I_U\subseteq I$ by \eqref{i:open1}.
\end{proof}

\begin{Lemma}\label{l:correspondece.open.ideal}
Let $\grp$ be an ample groupoid and $\mathcal O$ a $\grp$-sheaf of indecomposable rings. Then there is a one-to-one correspondence between open subsets of $\grp^{(0)}$ and ideals of $\secringunit$ generated by idempotents, where if $U\scj\grp^{(0)}$ is open, we define the ideal $I_U=\{f\in\secringunit\mid\supp f\scj U\}$, and if $I\scj\secringunit$ is an ideal generated by idempotents, we define $U_I=\bigcup_{\chi_V\in I} V$.  In particular, if $\mathcal O$ is a $\grp$-sheaf of fields, then there is a bijection between open subsets of $\grp^{(0)}$ and ideals of $\secringunit$.
\end{Lemma}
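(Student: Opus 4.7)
The plan is in three parts: (i) identify the idempotents of $A := \secringunit$ as exactly the characteristic functions $\chi_V$ of compact open subsets $V \scj \grp^{(0)}$, using indecomposability of the stalks; (ii) check that $U \mapsto I_U$ and $I \mapsto U_I$ are mutually inverse on the indicated domains; (iii) deduce the field case from Proposition \ref{p:vnr}.

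For (i), given an idempotent $e \in A$, convolution on the unit space is pointwise multiplication, so $e(x)^2 = e(x)$ for each $x \in \grp^{(0)}$, and indecomposability of $\OO_x$ forces $e(x) \in \{0_x, 1_x\}$. Setting $V = \{x \in \grp^{(0)} \mid e(x) = 1_x\}$, one has $V = e\inv(1(\grp^{(0)}))$ and $\grp^{(0)}\setminus V = e\inv(0(\grp^{(0)}))$. The zero and unit sections of the sheaf $\OO$ are continuous sections of the local homeomorphism $p\colon E \to \grp^{(0)}$, hence open maps, so $V$ is clopen in $\grp^{(0)}$. Since $e$ is a finite linear combination of sections supported on compact open subsets, $V$ lies inside a compact subset of the Hausdorff space $\grp^{(0)}$; being clopen there, $V$ is compact open and $e = \chi_V$. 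The converse is immediate. Consequently, ``ideal generated by idempotents'' is the same as ``ideal generated by $\chi_V$'s for compact open $V$''.

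For (ii), the equality $U_{I_U} = U$ uses the basis of compact opens for $\grp^{(0)}$: every $x \in U$ sits in some compact open $V \scj U$, so $\chi_V \in I_U$ and $x \in V \scj U_{I_U}$, while the reverse inclusion is built into the definitions. For an ideal $I$ generated by idempotents, Lemma \ref{l:ideal.open.set}\eqref{i:open2} gives $I_{U_I} \scj I$; conversely, by (i) $I$ is generated by characteristic functions $\chi_V \in I$, and each such $V \scj U_I$ by definition, hence $\chi_V \in I_{U_I}$ and $I \scj I_{U_I}$. For (iii), Proposition \ref{p:vnr} shows that when $\OO$ is a sheaf of fields $A$ is commutative and von Neumann regular; from $a = aba$ one extracts the idempotent $e = ab$ with $aA = eA$, so every principal ideal is generated by an idempotent, and hence every ideal (a sum of principal sub-ideals) is generated by idempotents, so the bijection from (ii) extends to all ideals.

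The main obstacle is part (i): one must argue that the support of a pointwise $\{0_x,1_x\}$-valued element of $A$ is not merely a set but a compact open subset of $\grp^{(0)}$. This step is where the sheaf-theoretic content beyond the stalkwise algebra enters, via the fact that sections of a local homeomorphism are open maps; the rest of the argument is essentially a bookkeeping check against the definitions and Lemma \ref{l:ideal.open.set}.
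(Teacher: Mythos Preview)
Your proof is correct and follows essentially the same approach as the paper: identify idempotents with characteristic functions via indecomposability of the stalks, then invoke Lemma~\ref{l:ideal.open.set} for the bijection, and Proposition~\ref{p:vnr} for the field case. The only difference is that you spell out step~(i) in detail (using that the zero and unit sections are open maps to show the support is clopen, hence compact open), whereas the paper simply asserts that the idempotents of $\secringunit$ are the $\chi_V$.
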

\begin{proof}
If $U\scj \grp^{(0)}$ is open, then, by Lemma~\ref{l:ideal.open.set}\eqref{i:open1}, $I_U$ is generated by the $\chi_V$ with $V\subseteq U$ compact open. Thus $I_U$ is generated by idempotents and clearly $U=\bigcup_{\chi_V\in I_U} V$ since $\grp^{(0)}$ has a basis of compact opens.  

Conversely, suppose that $I$ is an ideal generated by idempotents. Since $\mathcal O$ is a sheaf of indecomposable rings, the idempotents of $\secringunit$ are the characteristic functions $\chi_V$ with $V\subseteq \grp^{(0)}$ compact open. It now follows that $I\subseteq I_U$ where $U=\bigcup_{\chi_V\in I} V$. By Lemma~\ref{l:ideal.open.set}\eqref{i:open2}, we have the reverse inclusion $I_U\scj I$.

The final statement follows because if $\mathcal O$ is a $\grp$-sheaf of fields, then $\secringunit$ is von Neumann regular by Proposition~\ref{p:vnr} and hence every ideal is generated by idempotents (since if $aba=a$, then $ba$ and $a$ generate the same ideal and $ba$ is idempotent). 
\end{proof}

\begin{Prop}\label{p:minimal.cond}
Let $\grp$ be an ample groupoid and $\mathcal O$ a $\grp$-sheaf of rings such that each nonzero ideal $I$ of $\secringunit$ contains a characteristic function $\chi_V$ with $\emptyset\neq V\subseteq \grp^{(0)}$ compact open.   Then the groupoid $\grp$ is minimal if and only if $\secringunit$ is $\grp^a$-simple.
\end{Prop}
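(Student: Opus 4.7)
The plan is to set up a correspondence between nonempty open invariant subsets of $\grp^{(0)}$ and nonzero $\grp^a$-invariant ideals of $\secringunit$ via $U\mapsto I_U$, and to use Lemma~\ref{l:ideal.open.set} together with the standing hypothesis on characteristic functions to close the loop in each direction.

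For the forward implication, I would assume $\grp$ is minimal and let $I$ be a nonzero $\grp^a$-invariant ideal of $\secringunit$. Define $U=\bigcup\{V\subseteq\grp^{(0)}:V\text{ compact open and }\chi_V\in I\}$, which is nonempty by the standing hypothesis. To show $U$ is invariant, take $\gamma\in\grp$ with $\dom(\gamma)\in U$, pick a compact open $V\ni\dom(\gamma)$ with $\chi_V\in I$, and pick a compact open bisection $B\ni\gamma$. Replacing $V$ by $V\cap\dom(B)$ keeps $\chi_V$ in $I$ (since $\chi_{V\cap\dom(B)}=\chi_V\chi_{\dom(B)}$) and puts $\chi_V\in D_{B^*}$. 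From the formula $\til\alpha_B(f)(\ran(\eta))=\alpha_\eta(f(\dom(\eta)))$ of Section~\ref{sec:interplay}, one checks that $\til\alpha_B(\chi_V)=\chi_W$ where $W=\ran(B|_V)$ is compact open and contains $\ran(\gamma)$. By $\grp^a$-invariance of $I$, $\chi_W\in I$, so $\ran(\gamma)\in U$; the reverse implication follows by running the same argument with $\gamma^{-1}$ and $B^*$. Hence $U$ is open invariant and nonempty, so by minimality $U=\grp^{(0)}$, and Lemma~\ref{l:ideal.open.set}(2) gives $\secringunit=I_{\grp^{(0)}}=I_U\subseteq I$, so $I=\secringunit$.

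For the converse, assume $\secringunit$ is $\grp^a$-simple and let $U\subseteq\grp^{(0)}$ be nonempty open invariant. The ideal $I_U=\{f\in\secringunit:\supp(f)\subseteq U\}$ is nonzero, since any nonempty compact open $V\subseteq U$ yields $\chi_V\in I_U$. To see that $I_U$ is $\grp^a$-invariant, fix $s\in\grp^a$ and $f\in D_{s^*}\cap I_U$; the definition of $\til\alpha_s$ shows $\supp(\til\alpha_s(f))\subseteq\{\ran(\eta):\eta\in s,\ \dom(\eta)\in\supp(f)\}$, and since $\supp(f)\subseteq U$ and $U$ is invariant, this set lies in $U$, so $\til\alpha_s(f)\in I_U$. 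Thus $I_U$ is a nonzero $\grp^a$-invariant ideal, and $\grp^a$-simplicity forces $I_U=\secringunit$. Finally, for any $x\in\grp^{(0)}$ pick a compact open neighborhood $V$ of $x$; then $\chi_V\in\secringunit=I_U$ gives $V\subseteq U$, whence $x\in U$. Therefore $U=\grp^{(0)}$.

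The main technical point is the identification $\til\alpha_B(\chi_V)=\chi_{\ran(B|_V)}$, which shows that $\til\alpha_B$ sends characteristic functions to characteristic functions and thus allows the abstract $\grp^a$-invariance condition on ideals to be translated directly into the topological invariance of the associated open set under the groupoid action on $\grp^{(0)}$. Once this is unwound from the definitions of $D_s$ and $\til\alpha_s$ in Section~\ref{sec:interplay}, both directions amount to careful bookkeeping with the characteristic functions $\chi_V$.
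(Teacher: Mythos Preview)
Your proof is correct and follows essentially the same route as the paper's: both directions go through the correspondence $U\leftrightarrow I_U$, using Lemma~\ref{l:ideal.open.set} and the computation $\til\alpha_B(\chi_V)=\chi_{BVB^{-1}}=\chi_{\ran(B|_V)}$ to translate between invariance of open sets and $\grp^a$-invariance of ideals. Your version is in fact slightly more careful in two places: you explicitly intersect $V$ with $\dom(B)$ before applying $\til\alpha_B$ (so that $\chi_V$ actually lies in $D_{B^*}$), and in the converse you verify $\grp^a$-invariance of $I_U$ directly on all elements via supports rather than reducing to the generating characteristic functions.
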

\begin{proof}
First, suppose that $\grp$ is minimal.
Let $I$ be a nonzero $\grp^a$-invariant ideal and let $U=\bigcup_{\chi_V\in I}V$.  Then by Lemma~\ref{l:ideal.open.set}\eqref{i:open2}, we have that $I_U$ is generated by those $\chi_V$ in $I$ and hence $I_U\subseteq I$.  Also, by hypothesis, $I_U\neq 0$ and so $U\neq \emptyset$.  We claim that $U$ is $\grp^a$-invariant.  Indeed, if $x\in U$ and $\gamma\colon x\to y$, then by definition there is $\chi_V\in I$ with $x\in V$.  Choose $W\in \grp^a$ with $\gamma\in W$.  Then $\widetilde\alpha_W(\chi_V)=\chi_{WVW^{-1}}=\chi_W\chi_V\chi_{W^{-1}}$ is in $I$ and $y\in WVW^{-1}$. Thus $y\in U$ and so $U$ is invariant.  We conclude that $U=\grp^{(0)}$ and so $\secringunit=I_U\subseteq I$, as required.

Conversely, suppose that $\secringunit$ is $\grp^a$-minimal.  Let $U$ be a nonempty open invariant subset of $\grp^{(0)}$.  Then we claim $I_U$ is $\grp^a$-invariant.  It suffices to show that if $W$ is a compact open bisection and $V\subseteq U$ is compact open, then $\widetilde\alpha_W(\chi_V)\in I_U$, as the $\chi_V$ with $V\subseteq U$ generate $I_U$ by Lemma~\ref{l:ideal.open.set}\eqref{i:open1}.  But, as before, $\widetilde\alpha_W(\chi_V)=\chi_{WVW^{-1}}$ and $WVW^{-1}\subseteq U$ by invariance.  Thus $\widetilde\alpha_W(\chi_V)\in I_U$.  We conclude that $I_U=\secringunit$, as $I_U\neq 0$, and so $U=\grp^{(0)}$.  
\end{proof}

We now specialize the above results. Recall that a commutative ring $R$ with identity is \emph{local} if the noninvertible elements of $R$ form an ideal $\mathfrak m$, which is necessarily the unique maximal ideal of $R$.  Every field is local and every local ring is indecomposable. 

A sheaf of fields is a sheaf of local rings.  Another example is the following: if $X$ is a locally compact, totally disconnected space and $\mathcal O$ is the sheaf of germs of continuous complex (or real) valued functions on $X$, then $\mathcal O$ is a sheaf of local rings.  The maximal ideal $\mathfrak m_x$ of $\mathcal O_x$ consists of those germs of functions that vanish at $x$.  The global sections correspond exactly to continuous functions $f$ on $X$; the corresponding section sends $x$ to the germ of $f$ at $x$.  Thus the section corresponding to $f$ vanishes at $x\in X$ if and only if $f(x)=0$ and so only the zero section is noninvertible at every point of $X$.  This motivates the following extension of~\cite[Proposition 5.4]{BGOR}

\begin{Cor}\label{c:minimal}
Let $\grp$ be an ample groupoid and $\mathcal O$  a $\grp$-sheaf of rings such  that every nonzero section $s\in \secringunit$ is invertible at some $x\in \grp^{(0)}$.   Then, the groupoid $\grp$ is minimal if and only if $\secringunit$ is $\grp^a$-simple. In particular, the statement of the corollary is true if $\mathcal O$ is a sheaf of fields.
\end{Cor}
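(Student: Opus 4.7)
The plan is to deduce the corollary from Proposition~\ref{p:minimal.cond}, so the task reduces to showing that under the hypothesis ``every nonzero $s \in \secringunit$ is invertible at some $x \in \grp^{(0)}$,'' each nonzero ideal $I$ of $\secringunit$ contains a characteristic function $\chi_V$ with $V$ a nonempty compact open subset of $\grp^{(0)}$. Given such a fact, Proposition~\ref{p:minimal.cond} immediately yields the equivalence between minimality of $\grp$ and $\grp^a$-simplicity of $\secringunit$.

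To produce such a $\chi_V$ in $I$, I would start with a nonzero $s \in I$ and use the hypothesis to pick $x \in \grp^{(0)}$ with $s(x) \in \mathcal O_x^\times$. By Lemma~\ref{units:open}, the set $\mathcal O^\times$ is open in $E$ and inversion is continuous on $\mathcal O^\times$, so the preimage $s^{-1}(\mathcal O^\times)$ is an open neighborhood of $x$ in $\grp^{(0)}$. Since $\grp^{(0)}$ has a basis of compact open sets, I can shrink this to a compact open $V$ with $x \in V \subseteq s^{-1}(\mathcal O^\times)$. On $V$, define the continuous section $s'(y) = s(y)^{-1}$ (continuous by Lemma~\ref{units:open}), and let $t = s'\chi_V \in \secringunit$. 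Then by direct computation $t \ast s$ equals $\chi_V$ on $V$ and vanishes off $V$, so $t \ast s = \chi_V$. As $I$ is an ideal and $s \in I$, we conclude $\chi_V \in I$, as required.

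For the ``in particular'' clause, I would observe that when $\mathcal O$ is a sheaf of fields, any nonzero stalk element is automatically a unit; hence for any nonzero $s \in \secringunit$, picking any $x$ with $s(x) \neq 0$ gives $s(x) \in \mathcal O_x^\times$, verifying the hypothesis of the corollary.

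The main obstacle, such as it is, lies in ensuring the local inverse $s'$ actually defines a bona fide element of $\secringunit$; this is precisely where Lemma~\ref{units:open} enters to guarantee that $s'$ is continuous, and the choice of a compact open $V \subseteq s^{-1}(\mathcal O^\times)$ guarantees compact support. Everything else is a routine verification, and no further hypotheses on $\grp$ (Hausdorffness, effectiveness) are needed because the argument stays entirely within the unit space.
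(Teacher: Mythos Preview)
Your proposal is correct and follows essentially the same approach as the paper: both reduce to Proposition~\ref{p:minimal.cond} by showing that any nonzero ideal of $\secringunit$ contains some $\chi_V$ with $V\neq\emptyset$, using Lemma~\ref{units:open} to produce a local inverse section on a compact open neighborhood where $s$ takes unit values, and then multiplying to obtain $\chi_V\in I$. The handling of the ``in particular'' clause is identical as well.
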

\begin{proof}

Let $I$ be a nonzero ideal of $\secringunit$ and $0\neq s\in I$.  Then by hypothesis, there is $x\in \grp^{(0)}$ such that $s(x)\in \mathcal O_x^\times$ and hence, since $\mathcal O^\times$ is open by Lemma~\ref{units:open}, we conclude there is a compact open neighborhood $W$ of $x$ such that $s(W)\subseteq \mathcal O^\times$.  Since inversion is continuous on $\mathcal O^\times$ by Lemma~\ref{units:open}, there is a section $t$ defined on $W$ with $t(y)=s(y)^{-1}$ for all $y\in W$.  Then $t\in \secringunit$, $\chi_W=st\in I$ and $W\neq\emptyset$.  The result now follows from Proposition~\ref{p:minimal.cond}.

The final statement follows because if $\mathcal O$ is a sheaf of fields, then $s(x)\neq 0$ implies $s(x)$ is invertible.

\end{proof}

We remark that if $\mathcal O$ is a $\grp$-sheaf of commutative local rings, then the hypotheses of Proposition~\ref{p:minimal.cond} are equivalent to the hypotheses of Corollary~\ref{c:minimal}.  Indeed, the proof of Corollary~\ref{c:minimal} provides one implication.  On the other hand, for a $\grp$-sheaf of local rings, the sections $s\in \secringunit$ that are nowhere invertible form an ideal that contains no nonzero characteristic function.  Thus if the hypotheses of  Proposition~\ref{p:minimal.cond} apply, then there are no nonzero nowhere invertible sections.


\begin{Thm}\label{simplelife}
Let $\grp$ be an ample groupoid and $\mathcal O$ a $\grp$-sheaf of fields. The following are equivalent:
\begin{enumerate}
    \item\label{t.item:simple1} $\secring$ is simple;
    \item\label{t.item:simple2} $\secringunit$ is $\grp^a$-simple and a maximal commutative subring of $\secring$;
    \item\label{t.item:simple3} $\grp$ is minimal, $\mathrm{Int}(\ker \mathcal O)=\grp^{(0)}$ and $\Gamma_c(\mathrm{Int}(\ker\OO),\OO)=\{f\in\secring\mid \supp(f)\scj\ker\OO\}$.
\end{enumerate}
If moreover, $\grp$ is Hausdorff, the above items are also equivalent to:
\begin{enumerate}  \setcounter{enumi}{3}
    \item\label{t.item:simple4} $\grp$ is minimal and $\mathrm{Int}(\ker \mathcal O)=\grp^{(0)}$.
\end{enumerate}
\end{Thm}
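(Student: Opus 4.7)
The plan is to establish the cyclic equivalence $(1) \Rightarrow (2) \Rightarrow (3) \Rightarrow (1)$ and then deduce $(3) \Leftrightarrow (4)$ in the Hausdorff case as a direct corollary, relying on the realization $\secring \cong \secringunit \rtimes \grp^a$ of Theorem~\ref{siri}, the maximal commutativity criterion of Proposition~\ref{p:neweffective}, the minimality criterion of Corollary~\ref{c:minimal}, and the intersection theorem \cite[Theorem~3.4]{BGOR}.

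For $(2) \Leftrightarrow (3)$, since $\OO$ is a sheaf of fields, Corollary~\ref{c:minimal} gives $\grp$ minimal iff $\secringunit$ is $\grp^a$-simple, and Proposition~\ref{p:neweffective} (fields being integral domains) gives maximal commutativity of $\secringunit$ iff $\mathrm{Int}(\ker \OO) = \grp^{(0)}$ and $\Gamma_c(\mathrm{Int}(\ker \OO), \OO) = \{f \in \secring : \supp f \subseteq \ker \OO\}$. The equality condition is automatic under maximal commutativity by collapsing the chain
\[
\secringunit \subseteq \Gamma_c(\mathrm{Int}(\ker \OO), \OO) \subseteq \{f \in \secring : \supp f \subseteq \ker \OO\} \subseteq C_{\secring}(\secringunit)
\]
coming from Lemma~\ref{l:centralizer}.

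For $(2) \Rightarrow (1)$, any nonzero ideal $I$ of $\secring$ meets $\secringunit$ nontrivially by maximal commutativity and \cite[Theorem~3.4]{BGOR}; this intersection is $\grp^a$-invariant (for a compact open bisection $B$ and $f \in I \cap D_{B^*}$, we have $\chi_B \ast f \ast \chi_{B^*} = \widetilde{\alpha}_B(f) \in I \cap D_B$), hence equals $\secringunit$ by $\grp^a$-simplicity, forcing $I = \secring$ via local units. For $(1) \Rightarrow (2)$, the $\grp^a$-simplicity half follows by contrapositive: a proper nonzero $\grp^a$-invariant ideal $I \subseteq \secringunit$ induces a spectral quotient action and a surjection $\secring \twoheadrightarrow (\secringunit/I) \rtimes \grp^a$ onto a nonzero ring whose kernel is a proper nonzero ideal of $\secring$, contradicting simplicity. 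The maximal commutativity half is the chief obstacle: my approach is to invoke the converse of the intersection characterization (the biconditional version of \cite[Theorem~3.4]{BGOR} for spectral actions), which under (1) is vacuously satisfied since $\secring$ itself is the only nonzero ideal and trivially meets $\secringunit$. Absent a clean reference, one would argue directly by taking $f \in C_{\secring}(\secringunit) \setminus \secringunit$ with $\supp f \subseteq \ker \OO$ (Lemma~\ref{l:centralizer}) and constructing an explicit proper ideal from $f$ via the formula~\eqref{eq:Gamma.acts.induced.module} for the action on the induced modules $\Ind_x(M)$.

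Finally, for $(3) \Leftrightarrow (4)$ in the Hausdorff case: by Corollary~\ref{c:zero.sec.closed}, the zero section of a sheaf of fields is closed, so every $f \in \secring$ has open support in the Hausdorff setting, making the inclusion $\{f : \supp f \subseteq \ker \OO\} \subseteq \Gamma_c(\mathrm{Int}(\ker \OO), \OO)$ automatic (as in Corollary~\ref{cor:masa.Hausdorff}). The equality condition in (3) is then tautological and (3) collapses to (4).
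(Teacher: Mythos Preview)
Your arguments for $(2)\Leftrightarrow(3)$ and for $(3)\Leftrightarrow(4)$ in the Hausdorff case match the paper's approach exactly (Proposition~\ref{p:neweffective}, Corollary~\ref{c:minimal}, Corollary~\ref{cor:masa.Hausdorff}). Your direct argument for $(2)\Rightarrow(1)$ is also fine; it is essentially one half of \cite[Theorem~3.7]{BGOR}.

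The gap is in $(1)\Rightarrow(2)$, specifically the maximal commutativity claim. The paper does not attempt this directly: it simply invokes \cite[Theorem~3.7]{BGOR}, which is the full biconditional ``$A\rtimes S$ is simple $\iff$ $A$ is $S$-simple and $A$ is maximal commutative in $A\rtimes S$'' for skew inverse semigroup rings of this type. That is a different statement from \cite[Theorem~3.4]{BGOR}, and you are right to be uneasy about citing a ``biconditional version'' of the latter: \cite[Theorem~3.4]{BGOR} only gives the implication from maximal commutativity to the ideal-intersection property, and the converse of that implication is \emph{false} in general (there are non-maximal-commutative diagonals for which every nonzero ideal nonetheless meets the diagonal). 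So your ``vacuously satisfied'' line does not go through. Your fallback sketch---take $f\in C_{\secring}(\secringunit)\setminus\secringunit$ supported in $\ker\OO$ and build a proper ideal via induced modules---is not a proof either: you have not said what ideal you construct, why it is proper, or why it is nonzero, and the formula~\eqref{eq:Gamma.acts.induced.module} does not obviously single out such an ideal.

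The fix is precisely what the paper does: appeal to \cite[Theorem~3.7]{BGOR} for $(1)\Leftrightarrow(2)$ wholesale. The nontrivial content of that theorem in the direction you need is that failure of maximal commutativity of $A$ in $A\rtimes S$ produces a nonzero proper ideal; this is proved in \cite{BGOR} by an argument specific to the skew inverse semigroup ring structure (roughly, one shows that the two-sided ideal generated by a suitable commutator-type element misses the diagonal), and is not a formal consequence of the intersection property alone.
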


\begin{proof}
The equivalence between \eqref{t.item:simple1} and \eqref{t.item:simple2} is given by \cite[Theorem 3.7]{BGOR}. The equivalence between \eqref{t.item:simple2} and \eqref{t.item:simple3} follows from Propositions \ref{p:neweffective} and \ref{c:minimal}. In the Hausdorff case, the equivalence between \eqref{t.item:simple2} and \eqref{t.item:simple4} is due to Corollary \ref{cor:masa.Hausdorff} and Proposition \ref{c:minimal}.
\end{proof}




\section{Applications to topological dynamics}\label{dynamics}

 Motivated by our results on primitivity and semiprimitivity, described in Corollary~\ref{azul} and Theorem~\ref{tsemiprimitivity}, in this section, we characterize the existence of a dense orbit in an action of an inverse semigroup on a topological space in terms of primitivity of the associated algebra. We will also show that the algebras associated with effective actions are always semiprimitive.



An important class of actions is formed by topologically free\footnote{The terminology ``effective'' is used in \cite{BC} but the more usual meaning of effective action is faithful and, indeed, the origin of the term ``effective groupoid'' is that $\grp$ is effective if and only if the action of the inverse semigroup of open bisections on $\grp^{(0)}$ is faithful.}  actions~\cite{BC, BG, DG, GOR, groupoidprimitive}. We recall the definition, in the context of inverse semigroup actions, below. 

\begin{Def}\label{def:topfree}[\!\!{\cite[Definition~4.1]{ExelPardo}}]
Let $\theta=\left(\{X_s\}_{s\in S},\{\theta_s\}_{s\in S}\right)$ be an action of an inverse semigroup $S$ on a locally compact Hausdorff space $X$ by partial homeomorphisms. We say that $\theta$ is 
\emph{topologically free} if, and only if, the interior of the set $\left\lbrace x \in X_{s^*} \mid  \theta_{s}(x)=x \right\rbrace$ is equal to 
 \begin{displaymath}
 \{ x \in X_{s^*}  \mid \text{ there is } \ e \in E(S) \text{ such that } e\leq s \text{ and } x \in X_e \},
\end{displaymath}
for all $s \in S$.
\end{Def}

We will see below that the groupoid of germs associated with an inverse semigroup action is the key object connecting properties of the action with properties of the associated algebras. We refer the reader to the second paragraph below Theorem~\ref{siri}, or to \cite[Section~3]{BC}, for the definition of the groupoid of germs. 

The following is proved in \cite[Proposition~4.7]{ExelPardo}, \cite[Proposition 5.6]{groupoidprimitive}  and \cite[Proposition~7.3]{BC}.

\begin{Prop}\label{cinza} Let $\theta$ be an action of the inverse semigroup $S$ on the topological space $X$. Then, $\theta$ is topologically free if, and only if, the corresponding groupoid of germs $S\ltimes X$ is effective.
\end{Prop}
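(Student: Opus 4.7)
The plan is to translate both conditions into statements about germs in $S\ltimes X$, and then verify the set equality in the definition of topological freeness by comparing it point-by-point with the effectiveness condition on the isotropy. The key observations are: (i) a germ $[s,x]$ lies in $\mathrm{Iso}(S\ltimes X)$ iff $\theta_s(x)=x$, since $\dom([s,x])=x$ and $\ran([s,x])=\theta_s(x)$; and (ii) $[s,x]\in (S\ltimes X)^{(0)}$ iff there exists $u\leq s$ with $x\in X_{u^*}$ and $[s,x]=[u,x]$ is a unit, which forces $u$ to be idempotent (a standard fact: if $u\leq e$ with $e$ idempotent, then $u=eu^*u=u^*ue$ and a direct computation gives $u^2=u$). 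Thus one can rephrase topological freeness as: for every $s\in S$, the interior of $\{x\in X_{s^*}\mid [s,x]\in\mathrm{Iso}(S\ltimes X)\}$ equals $\{x\in X_{s^*}\mid [s,x]\in (S\ltimes X)^{(0)}\}$.

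For the direction ``topologically free $\Rightarrow$ effective'', I would take a germ $[s,x]\in\mathrm{int}(\mathrm{Iso}(S\ltimes X))$. Using the basis of the germ topology, pick an open neighborhood of the form $(s,U)$ of $[s,x]$ contained in $\mathrm{Iso}(S\ltimes X)$; then every $y\in U$ satisfies $\theta_s(y)=y$, so $U$ is contained in the interior of the fixed-point set of $\theta_s$. Topological freeness then produces an idempotent $e\leq s$ with $x\in X_e$, which makes $[s,x]=[e,x]$ a unit. Combined with the trivial inclusion $(S\ltimes X)^{(0)}\subseteq\mathrm{int}(\mathrm{Iso}(S\ltimes X))$, this gives effectiveness.

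For the converse ``effective $\Rightarrow$ topologically free'', I would fix $s\in S$ and prove both inclusions in the definition. The easy inclusion $\{x\mid\exists e\leq s,\ x\in X_e\}\subseteq\mathrm{int}\{x\mid\theta_s(x)=x\}$ is immediate since each $X_e$ is open and on $X_e$ the germ $[s,x]$ agrees with the unit $[e,x]=x$. For the reverse inclusion, if $x$ lies in the interior of the fixed-point set of $\theta_s$, pick an open $U\subseteq D_{s^*}$ containing $x$ on which $\theta_s$ is the identity; then $(s,U)$ is an open subset of $\mathrm{Iso}(S\ltimes X)$, so $[s,x]\in\mathrm{int}(\mathrm{Iso}(S\ltimes X))=(S\ltimes X)^{(0)}$ by effectiveness, and applying observation (ii) yields an idempotent $u\leq s$ with $x\in X_u$.

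The only subtle point is the argument that $u\leq e$ with $e$ idempotent forces $u$ to be idempotent, together with keeping the bookkeeping straight between the equivalence relation defining germs and the set-theoretic characterization in the definition of topological freeness. Apart from that, the proof is essentially a direct unwinding of definitions, so I expect no significant obstacle.
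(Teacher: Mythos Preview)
Your proof is correct. The paper does not give its own argument for this proposition: it simply cites external references (Exel--Pardo, \cite{groupoidprimitive}, and \cite{BC}) and moves on. So you are supplying a complete self-contained proof by unwinding the germ equivalence and the basis for the groupoid topology, and your argument is essentially the standard one found in those references. Two minor points worth tightening in a final write-up: first, the claim that a basic neighborhood of $[s,x]$ inside $\mathrm{Iso}(S\ltimes X)$ can be taken of the form $(s,U)$ for the \emph{same} $s$ deserves one line of justification (given $(t,V)\ni[s,x]$ pick $u\leq s,t$ with $x\in X_{u^*}$ and observe $(s,V\cap X_{u^*})=(u,V\cap X_{u^*})\subseteq(t,V)$); second, the ``$u\leq e$ with $e$ idempotent forces $u$ idempotent'' step is cleanest by writing $u=fe$ for some idempotent $f$ and using that idempotents commute in an inverse semigroup.
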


\begin{Prop}\label{orbit} Given an action $\theta$ of the inverse semigroup $S$ on the topological space $X$, the groupoid of germs $S\ltimes X$ has a dense orbit if and only if $\theta$ has a dense orbit.
\end{Prop}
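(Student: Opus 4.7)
The plan is to observe that the proposition is essentially a matter of unwinding the definition of the groupoid of germs and comparing orbits. The only thing that really needs to be checked is that the orbit of a point $x \in X$, computed inside the groupoid $S \ltimes X$, coincides (as a subset of $X$) with the orbit of $x$ under the inverse semigroup action $\theta$, and that the ambient topology on $(S\ltimes X)^{(0)}$ agrees with the topology on $X$.

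First, I would recall from the construction of $S\ltimes X$ recalled just after Theorem~\ref{siri} that the unit space $(S\ltimes X)^{(0)}$ is $X$ itself, equipped with its original topology. The arrows with domain $x$ are exactly the germs $[s,x]$ with $s\in S$ and $x\in D_{s^*}$, and for such an arrow the range is $\theta_s(x)$. Thus, by definition of the orbit in an \'etale groupoid,
\[
\mathrm{Orb}_{S\ltimes X}(x) \;=\; \ran(\dom^{-1}(x)) \;=\; \{\theta_s(x) \mid s\in S,\ x\in D_{s^*}\},
\]
which is precisely the orbit $\mathrm{Orb}_\theta(x)$ of $x$ under the action $\theta$.

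Since $(S\ltimes X)^{(0)} = X$ as topological spaces, a subset $Y\subseteq X$ is dense in $(S\ltimes X)^{(0)}$ if and only if it is dense in $X$. Combining this with the identification of orbits just established, $S\ltimes X$ has a dense orbit if and only if some $\mathrm{Orb}_\theta(x)$ is dense in $X$, which is exactly the statement that $\theta$ has a dense orbit. There is no genuine obstacle here: the entire content of the proposition is bookkeeping, and the proof fits in a few lines once the correspondence between groupoid orbits and action orbits is spelled out.
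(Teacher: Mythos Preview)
Your proposal is correct and follows essentially the same approach as the paper: both identify the unit space of $S\ltimes X$ with $X$ and observe that the groupoid orbit of a point coincides with its orbit under the action $\theta$. The only difference is that the paper outsources the orbit identification to a citation (\cite[Proposition~5.4]{groupoidprimitive}), whereas you spell it out directly from the definition of the groupoid of germs.
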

\begin{proof}
 This is immediate, since the unit space of $S\ltimes X$ is identified with $X$, and the groupoid orbit of a point $x\in X$ is equal to the orbit of $x$ under the action of $S$, as shown in~\cite[Proposition~5.4]{groupoidprimitive}.
 %
\end{proof}

\begin{Rmk} If $X$ is locally compact, Hausdorff and second countable, having a dense orbit is equivalent to topological transitivity of the action, see for example~\cite[Lemma~3.4]{groupoidprime} (or for a special case \cite[Theorem~4.1]{BCS}).
\end{Rmk}

Next, we recall how to associate an algebra to a topological inverse semigroup action.
Let $R$ be a unital and commutative ring, and $\theta=(\{\theta_s \}_{s\in S}, \{X_s\}_{s\in S})$ be an action of an inverse semigroup $S$ on a locally compact, Hausdorff, zero-dimensional space $X$ (see \cite{BC, BG} for details on actions of inverse semigroups). There is a corresponding dual action (see \cite{BC}) $\alpha=(\{\alpha_s\}_{s\in S}, \{D_s\}_{s\in S})$ of the semigroup $S$ on the $R$-algebra $\Lc$ of all locally constant, compactly supported, $R$-valued functions on $X$ where, for each $s \in S$, $\alpha_s$ is the isomorphism from $D_{s^*}= \mathcal{L}_c(X_{s^*})$ 
onto $D_s= \mathcal{L}_c(X_s)$ given by 
\begin{displaymath}
	\alpha_s(f)(x)= \left\lbrace \begin{array}{ccr}
    f \circ \theta_{s^*}(x) & & \mbox{if} \,\, x \in X_s \\
    0                       & & \mbox{if} \,\, x \notin X_s 
    \end{array}\right..
\end{displaymath}


By \cite[Theorem~5.10]{BC}, the skew inverse semigroup ring, $\mathcal{L}_c(X)\rtimes S$, associated to the action $\alpha$ above, is isomorphic to the usual groupoid algebra, denoted by $R (S\ltimes X)$, over the groupoid of germs $ S\ltimes X$. Recall from Example~\ref{ex:constant.sheaf} that $R (S\ltimes X)$ is isomorphic to  $\Gamma_c(S\ltimes X,\Delta(R))$, where $\Delta(R)$ is the constant sheaf over $S\ltimes X$. 
More generally, we can consider a $S\ltimes X$-sheaf of rings $\OO$, which includes the example above as well as the $S\ltimes \wh{A}$-sheaf $\OO_A$ coming from a spectral action of an inverse semigroup $S$ on a unital ring $A$ as in Theorem \ref{t:sheaf.Pierce}. 

We will use the results developed in Section \ref{s:applications} to relate algebraic properties of the ring $\Gamma_c(S\ltimes X,\OO)$ and topological properties of the action of $S$ on $X$. 
When the groupoid of germs $S\ltimes X$ is not Hausdorff, our results depend on the maximal commutativity of $ \Gamma_c(\mathscr (S\ltimes X)^{(0)},\mathcal O)$. As we showed in Corollary~\ref{cor:masa.Hausdorff}, when $S\ltimes X$ is Hausdorff this condition is simplified and, in particular, if $\theta$ is topologically free then $ \Gamma_c(\mathscr (S\ltimes X)^{(0)},\mathcal O)$ is maximal commutative. For sufficient conditions for the groupoid of germs to be Hausdorff, see  \cite[Proposition~3.20]{BC} and \cite[Theorem~3.15]{ExelPardo} for example.



\begin{Prop}\label{oneway}
Let $\theta=(\{\theta_s \}_{s\in S}, \{X_s\}_{s\in S})$ be a action of an inverse semigroup $S$ on a locally compact, Hausdorff, zero-dimensional space $X$, and let $\OO$ be a $S\ltimes X$-sheaf of rings. If $\Gamma_c(S\ltimes X,\OO)$ is left primitive then $\theta$ has a dense orbit.
\end{Prop}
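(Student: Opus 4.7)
The plan is to reduce the statement to results already established in the paper, specifically Corollary~\ref{c:dense.orbit} and Proposition~\ref{orbit}. First, I would observe that the groupoid of germs $\grp = S\ltimes X$ is ample: indeed, the unit space is identified with $X$, which by hypothesis is locally compact, Hausdorff, and zero-dimensional, hence has a basis of compact open sets. Thus $\Gamma_c(S\ltimes X,\OO)$ fits into the framework of Section~\ref{s:applications}.

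Next, applying Corollary~\ref{c:dense.orbit} to the ample groupoid $\grp = S\ltimes X$ with its sheaf of rings $\OO$, left primitivity of $\Gamma_c(S\ltimes X,\OO)$ implies the existence of a point $x\in (S\ltimes X)^{(0)} = X$ whose groupoid orbit $\mathrm{Orb}(x)$ is dense in $X$.

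Finally, by Proposition~\ref{orbit}, the groupoid orbit of $x$ under $S\ltimes X$ coincides with the orbit of $x$ under the action $\theta$; equivalently, $S\ltimes X$ has a dense orbit if and only if $\theta$ does. Therefore $\theta$ has a dense orbit, which completes the argument.

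Since every step invokes a theorem already proved, there is no real obstacle here; the proposition is essentially a bookkeeping corollary packaging Corollary~\ref{c:dense.orbit} together with the translation between groupoid-of-germs orbits and inverse semigroup orbits provided by Proposition~\ref{orbit}.
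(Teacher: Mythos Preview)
Your proposal is correct and follows essentially the same approach as the paper: invoke Corollary~\ref{c:dense.orbit} to obtain a dense groupoid orbit, then apply Proposition~\ref{orbit} to translate this into a dense orbit for the action $\theta$. The only difference is that you explicitly justify why $S\ltimes X$ is ample, which the paper leaves implicit.
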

\begin{proof}
By Corollary~\ref{c:dense.orbit} the groupoid of germs $S\ltimes X$ has a dense orbit. The result now follows from Proposition~\ref{orbit}.
\end{proof}

In the case of a sheaf of fields and $ \Gamma_c(\mathscr (S\ltimes X)^{(0)},\mathcal O)$ maximal commutative we have a converse of the above result.

\begin{Thm} Let $\theta=(\{\theta_s \}_{s\in S}, \{X_s\}_{s\in S})$ be an action of an inverse semigroup $S$ on a locally compact, Hausdorff, zero-dimensional space $X$, and let $\OO$ be a $S\ltimes X$-sheaf of fields. Suppose that $ \Gamma_c(\mathscr (S\ltimes X)^{(0)},\mathcal O)$ is maximal commutative.  Then $\theta$ has a dense orbit if, and only if, $\Gamma_c(S\ltimes X,\OO)$ is left primitive.

\end{Thm}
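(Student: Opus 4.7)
The plan is to assemble the statement from the machinery already built: Proposition~\ref{oneway} handles the forward implication, while the reverse implication reduces, via Proposition~\ref{orbit}, to Theorem~\ref{t:max.abel.case}. No new calculation is required; the content lies in observing that the hypotheses of Theorem~\ref{t:max.abel.case} are met by our groupoid of germs $\grp = S\ltimes X$.

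For the ``only if'' direction, I would simply cite Proposition~\ref{oneway}: assuming $\Gamma_c(S\ltimes X,\OO)$ is left primitive, Corollary~\ref{c:dense.orbit} yields a unit with dense groupoid orbit, and Proposition~\ref{orbit} translates this into a dense orbit of the inverse semigroup action $\theta$ on $X$.

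For the ``if'' direction, I would verify that the hypotheses of Theorem~\ref{t:max.abel.case} apply to $\grp = S\ltimes X$: (i) $\OO$ is a $\grp$-sheaf of fields by assumption, (ii) $\Gamma_c(\grp^{(0)},\OO) = \Gamma_c((S\ltimes X)^{(0)},\OO)$ is maximal commutative in $\Gamma_c(\grp,\OO)$ by assumption, and (iii) the unit space $\grp^{(0)} = X$ has a dense $\grp$-orbit by Proposition~\ref{orbit} applied to the dense $\theta$-orbit. Theorem~\ref{t:max.abel.case} then concludes that $\Gamma_c(S\ltimes X,\OO)$ is left primitive.

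There is no serious obstacle, since both implications have been prepared in the previous sections; the only thing to be careful about is the identification of $\grp^{(0)}$ with $X$ and the orbit correspondence, both of which are recorded in Proposition~\ref{orbit} (ultimately \cite[Proposition~5.4]{groupoidprimitive}). The statement is essentially a restatement of Theorem~\ref{t:max.abel.case} in the language of inverse semigroup actions, together with the translation of the groupoid-theoretic dense-orbit condition via Proposition~\ref{orbit}.
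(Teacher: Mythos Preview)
Your proposal is correct and matches the paper's approach exactly: the paper's proof is the single sentence ``This follows from the Theorem~\ref{t:max.abel.case} and Proposition~\ref{orbit},'' and your argument is precisely the unpacking of that sentence (with Proposition~\ref{oneway} being itself just the combination of Corollary~\ref{c:dense.orbit} and Proposition~\ref{orbit}).
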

\begin{proof}

This follows from the Theorem~\ref{t:max.abel.case} and Proposition~\ref{orbit}.

\end{proof}

If $S\ltimes X$ is Hausdorff and the action is topologically free, then we do not need the assumption that $ \Gamma_c(\mathscr (S\ltimes X)^{(0)},\mathcal O)$ is maximal commutative, as we see below.

\begin{Cor}\label{primodenso} Let $\theta=(\{\theta_s \}_{s\in S}, \{X_s\}_{s\in S})$ be a topologically free action of an inverse semigroup $S$ on a locally compact, Hausdorff, zero-dimensional space $X$, and let $\OO$ be a $S\ltimes X$-sheaf of fields. If $S\ltimes X$ is Hausdorff, then $\theta$ has a dense orbit if, and only if, $\Gamma_c(S\ltimes X,\OO)$ is left primitive.

\end{Cor}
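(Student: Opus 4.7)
The plan is to deduce this corollary by combining the previous theorem (the version without the topological freeness hypothesis but requiring maximal commutativity) with the characterization of topological freeness in terms of effectiveness of the groupoid of germs, and then invoking the Hausdorff criterion for maximal commutativity.

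First, the forward implication ``$\Gamma_c(S\ltimes X,\OO)$ left primitive $\Rightarrow$ $\theta$ has a dense orbit'' is already handled by Proposition~\ref{oneway} and does not use topological freeness. So the substance of this corollary lies in the converse.

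For the converse, I would argue as follows. Assume $\theta$ has a dense orbit. By Proposition~\ref{orbit}, the groupoid of germs $\grp:=S\ltimes X$ has a dense orbit in $\grp^{(0)}=X$. Since $\theta$ is topologically free, Proposition~\ref{cinza} gives that $\grp$ is effective, i.e. $\mathrm{Int}(\mathrm{Iso}(\grp))=\grp^{(0)}$, and in particular $\mathrm{Int}(\ker\OO)=\grp^{(0)}$ since $\ker\OO\subseteq \mathrm{Iso}(\grp)$. Because $\OO$ is a $\grp$-sheaf of fields, Corollary~\ref{c:zero.sec.closed} ensures that the zero section of $\OO$ has closed image. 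Combined with the standing assumption that $\grp$ is Hausdorff, Corollary~\ref{cor:masa.Hausdorff} then tells us that $\Gamma_c(\grp^{(0)},\OO)$ is a maximal commutative subring of $\Gamma_c(\grp,\OO)$. At this point all hypotheses of Theorem~\ref{t:max.abel.case} are in force (sheaf of fields, maximal commutative diagonal, dense orbit in $\grp^{(0)}$), and that theorem directly yields that $\Gamma_c(S\ltimes X,\OO)$ is left primitive.

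There is no real obstacle: the corollary is essentially an assembly of results already proved in the paper. The only point to verify is that the ``Hausdorff + topologically free + sheaf of fields'' hypotheses collectively imply the ``maximal commutativity'' hypothesis of Theorem~\ref{t:max.abel.case}, which is precisely the content of Corollary~\ref{cor:masa.Hausdorff} via effectiveness of $S\ltimes X$ and closedness of the zero section.
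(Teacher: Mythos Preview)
Your proof is correct and follows essentially the same route as the paper. The paper's proof is more compressed, citing Corollary~\ref{azul} (which already packages Corollary~\ref{cor:masa.Hausdorff} and Theorem~\ref{t:max.abel.case} together under the effectiveness hypothesis) and Proposition~\ref{orbit}; you have simply unpacked Corollary~\ref{azul} into its constituent ingredients and made the use of Proposition~\ref{cinza} explicit.
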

\begin{proof}

This follows from Corollary~\ref{azul} and Proposition~\ref{orbit}.

\end{proof}


\begin{Rmk} If $R$ is a field then, taking $\OO = \Delta(R)$ in the corollary above, we obtain that an action $\theta$ on $X$, whose groupoid of germs is Hausdorff, has a dense orbit if, and only if, $\mathcal{L}_c(X)\rtimes S$ is left primitive. 
\end{Rmk}

\begin{Rmk}
Since $\Gamma_c(S\ltimes X,\Delta(R))$ coincides with the groupoid algebra of \cite{Steinbergalgebra}, the above corollary can also be obtained using Theorem~4.10 in \cite{groupoidprimitive}. 
\end{Rmk}

Next, we observe that for a sheaf of fields and $ \Gamma_c(\mathscr (S\ltimes X)^{(0)},\mathcal O)$ maximal commutative, the algebra $\Gamma_c(S\ltimes X,\OO)$ is always semiprimitive.


\begin{Prop}\label{semiprop1}
Let $\theta=(\{\theta_s \}_{s\in S}, \{X_s\}_{s\in S})$ be an action of an inverse semigroup $S$ on a locally compact, Hausdorff, zero-dimensional space $X$, and let $\OO$ be a $S\ltimes X$-sheaf of fields. If $ \Gamma_c(\mathscr (S\ltimes X)^{(0)},\mathcal O)$ is maximal commutative, then $\Gamma_c(S\ltimes X,\OO)$ is semiprimitive.
\end{Prop}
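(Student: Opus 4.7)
The plan is to observe that this proposition is essentially a direct corollary of Theorem~\ref{tsemiprimitivity}, which asserts that if $\mathcal O$ is a $\grp$-sheaf of fields and $\secringunit$ is a maximal commutative subring of $\secring$, then $\secring$ is semiprimitive. I would simply specialize that theorem to $\grp = S \ltimes X$.

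First I would verify that $\grp := S\ltimes X$ is an ample groupoid: since $X$ is locally compact, Hausdorff and zero-dimensional, it has a basis of compact open sets, and these, together with the basic open sets $(s,U)$ with $U\subseteq D_{s^*}$ compact open, give the groupoid of germs a basis of compact open bisections. The unit space $(S\ltimes X)^{(0)}$ is canonically identified with $X$, so the hypothesis that $\Gamma_c((S\ltimes X)^{(0)},\OO)$ is maximal commutative in $\Gamma_c(S\ltimes X,\OO)$ is exactly the maximal commutativity hypothesis of Theorem~\ref{tsemiprimitivity}. Combined with the assumption that $\OO$ is a $\grp$-sheaf of fields, all hypotheses of Theorem~\ref{tsemiprimitivity} are fulfilled, and so $\Gamma_c(S\ltimes X,\OO)$ is semiprimitive.

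There is no genuine obstacle — the content of the result is entirely contained in Theorem~\ref{tsemiprimitivity}; the present proposition is merely its translation into the language of inverse semigroup actions via the groupoid of germs. The only thing worth mentioning explicitly in the proof is the identification of the unit space of $S\ltimes X$ with $X$ so that the reader can see the hypothesis matching.
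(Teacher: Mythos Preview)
Your proposal is correct and matches the paper's own proof, which simply reads ``This follows Theorem~\ref{tsemiprimitivity}.'' The extra verification that $S\ltimes X$ is ample is fine but not strictly needed, as this is already part of the standing background on groupoids of germs.
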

\begin{proof}
This follows Theorem~\ref{tsemiprimitivity}.
\end{proof}

As before, if $S\ltimes X$ is Hausdorff and the action is topologically free, then we do not need the assumption that $ \Gamma_c(\mathscr (S\ltimes X)^{(0)},\mathcal O)$ is maximal commutative.

\begin{Prop}\label{semiprop2}
Let $\theta=(\{\theta_s \}_{s\in S}, \{X_s\}_{s\in S})$ be a topologically free action of an inverse semigroup $S$ on a locally compact, Hausdorff, zero-dimensional space $X$, and let $\OO$ be a $S\ltimes X$-sheaf of fields. If $S\ltimes X$ is Hausdorff, Then $\Gamma_c(S\ltimes X,\OO)$ is semiprimitive.
\end{Prop}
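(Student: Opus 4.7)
The plan is to reduce this to Corollary~\ref{esqui} by interpreting the hypotheses in terms of the groupoid of germs $\grp = S\ltimes X$. First I would note that $\grp$ is ample: by construction its unit space is identified with $X$, which is locally compact, Hausdorff, and zero-dimensional, hence has a basis of compact open sets; this is precisely the definition of an ample groupoid given in the Background section. By hypothesis $\grp$ is Hausdorff.

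Next, I would invoke Proposition~\ref{cinza} to translate topological freeness of $\theta$ into effectiveness of $\grp = S\ltimes X$. Since every sheaf of fields is in particular such that $\grp^{(0)} \subseteq \ker\OO \subseteq \mathrm{Iso}(\grp)$, and effectiveness says $\mathrm{Int}(\mathrm{Iso}(\grp)) = \grp^{(0)}$, we obtain $\mathrm{Int}(\ker\OO) = \grp^{(0)}$ (this is exactly the ``e.g., if $\grp$ is effective'' clause in the statement of Corollary~\ref{esqui}).

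With these observations, all the hypotheses of Corollary~\ref{esqui} are verified for the Hausdorff ample groupoid $\grp = S\ltimes X$ with its sheaf of fields $\OO$, so that corollary yields semiprimitivity of $\Gamma_c(S\ltimes X,\OO)$, which is exactly what we want.

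There is essentially no obstacle here: the proposition is a direct packaging of Corollary~\ref{esqui} through the dictionary provided by Proposition~\ref{cinza}. The only point that requires a moment's attention is making sure one has not lost the hypothesis ``$X$ is zero-dimensional'' when verifying that $\grp$ is ample, and that the sheaf-of-fields hypothesis, together with effectiveness, yields the condition $\mathrm{Int}(\ker\OO) = \grp^{(0)}$ rather than having to appeal to a separate sheaf-theoretic argument.
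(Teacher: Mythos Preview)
Your proposal is correct and matches the paper's own proof, which simply reads: ``This follows directly from Proposition~\ref{cinza} and Corollary~\ref{esqui}.'' You have merely spelled out the verifications (ampleness of $S\ltimes X$ and that effectiveness gives $\mathrm{Int}(\ker\OO)=\grp^{(0)}$) that the paper leaves implicit.
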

\begin{proof}
This follows directly from Proposition~\ref{cinza} and Corollary~\ref{esqui}.
\end{proof}

Finally, we show that Theorem~\ref{simplelife} can be used to describe the minimality of a topologically free topological action in terms of the simplicity of the associated algebras. In particular, the theorem below should be compared with \cite[Corollaries~4.19 and 4.20]{BGOR}.

\begin{Thm}\label{simpleaction} Let $\theta=(\{\theta_s \}_{s\in S}, \{X_s\}_{s\in S})$ be a topologically free action of an inverse semigroup $S$ on a locally compact, Hausdorff, zero-dimensional space $X$, and let $\OO$ be a $S\ltimes X$-sheaf of fields. If $S\ltimes X$ is Hausdorff, then $\theta$ is minimal if, and only if, $\Gamma_c(S\ltimes X,\OO)$ is simple.
\end{Thm}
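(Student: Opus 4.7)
The plan is to invoke the Hausdorff equivalence in Theorem~\ref{simplelife}, specifically the equivalence between items~\eqref{t.item:simple1} and~\eqref{t.item:simple4}, applied to the ample groupoid $\grp = S\ltimes X$ (which is Hausdorff by hypothesis and ample because $X$ is locally compact, Hausdorff and zero-dimensional) with the $\grp$-sheaf of fields $\OO$. Theorem~\ref{simplelife} then reduces the problem to two bridges between the action side and the groupoid side: (a) minimality of $\theta$ is equivalent to minimality of $\grp$, and (b) the effectiveness condition $\mathrm{Int}(\ker\OO) = \grp^{(0)}$ holds under topological freeness.

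For bridge~(a), I would recall, as in the proof of Proposition~\ref{orbit}, that the $\theta$-orbit of a point $x \in X$ coincides with the groupoid orbit $\mathrm{Orb}(x)$ in $S\ltimes X$. Since open invariant subsets of $\grp^{(0)} = X$ for $\grp = S\ltimes X$ are exactly unions of groupoid orbits, and since orbits agree, these are precisely the open $\theta$-invariant subsets of $X$. Therefore $\theta$ is minimal if and only if $S\ltimes X$ is minimal in the sense used in Theorem~\ref{simplelife}.

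For bridge~(b), topological freeness of $\theta$ yields effectiveness of $\grp = S\ltimes X$ by Proposition~\ref{cinza}, so $\mathrm{Int}(\mathrm{Iso}(\grp)) = \grp^{(0)}$. Since $\ker\OO \subseteq \mathrm{Iso}(\grp)$ by definition, we get $\mathrm{Int}(\ker\OO) \subseteq \mathrm{Int}(\mathrm{Iso}(\grp)) = \grp^{(0)}$. Conversely, axiom (S1) gives $\alpha_x = \mathrm{id}_{\OO_x}$ for every unit $x \in \grp^{(0)}$, hence $\grp^{(0)} \subseteq \ker\OO$ and, because $\grp^{(0)}$ is open in $\grp$, also $\grp^{(0)} \subseteq \mathrm{Int}(\ker\OO)$. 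Thus $\mathrm{Int}(\ker\OO) = \grp^{(0)}$ comes for free from topological freeness.

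Combining the two bridges with Theorem~\ref{simplelife}: if $\theta$ is minimal then $\grp$ is minimal by~(a) and $\mathrm{Int}(\ker\OO) = \grp^{(0)}$ by~(b), so item~\eqref{t.item:simple4} of Theorem~\ref{simplelife} holds, whence $\Gamma_c(S\ltimes X, \OO)$ is simple. Conversely, if $\Gamma_c(S\ltimes X, \OO)$ is simple, then Theorem~\ref{simplelife}~\eqref{t.item:simple4} gives minimality of $\grp$, and bridge~(a) then gives minimality of $\theta$. I do not anticipate any serious obstacle here: the theorem is essentially a packaging of Theorem~\ref{simplelife} into the dynamical language, and the only subtle point is the trivial verification that $\grp^{(0)} \subseteq \ker\OO$, which is immediate from sheaf axiom~(S1).
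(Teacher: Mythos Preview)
Your proposal is correct and follows essentially the same route as the paper: reduce to Theorem~\ref{simplelife} after using Proposition~\ref{cinza} to pass from topological freeness to effectiveness, and hence to $\mathrm{Int}(\ker\OO)=\grp^{(0)}$. The only difference is that you spell out the (easy) bridge between minimality of $\theta$ and minimality of $S\ltimes X$ and the inclusion $\grp^{(0)}\subseteq\ker\OO$, both of which the paper leaves implicit.
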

\begin{proof}
By Proposition~\ref{cinza}, the groupoid of germs $S\ltimes X$ is effective. Hence $\mathrm{Int}(\ker \mathcal O)=(S\ltimes X)^{(0)}$ and the result follows from Theorem~\ref{simplelife}.
\end{proof}

\section{Complex groupoid algebras}\label{complex}
Let $\grp$ be an ample groupoid. We let $C_c(\grp)$ be the usual ring used to build $C^*(\grp)$, see \cite{ConnesNH, PatersonBook}. Since $\grp$ is ample, $C_c(\grp)$ is the span of functions $f:\grp\to\cn$ for which there exists a compact-open bisection $U$ such that $\supp(f)\scj U$ and the restriction $f|_U$ is continuous, where $\cn$ has the usual topology. Our goal is to prove that $C_c(\grp)$, with the convolution product, is a groupoid ring with coefficients in a $\grp$-sheaf. In the Hausdorff case, we explicitly build a spectral action (recall the definition of a spectral action from the paragraph above Section~\ref{sec:interplay}) and show that $C_c(\grp)$ is isomorphic to the skew inverse semigroup ring of this action, so that we can apply Theorem~\ref{t:sheaf.Pierce}. In the general setting, we build a $\grp$-sheaf of rings. 

\subsection{The Hausdorff case}

Let $A:=C_c(\grp^{(0)})$ be the ring of complex valued continuous functions with compact support 
and $\mathscr G^a$ be the inverse semigroup of compact-open bisections of $\grp$ (see Section~\ref{sec:interplay}). For each $U\in \mathscr G^a$, let $D_U=\{f\in C_c(\grp^{(0)}):\supp(f)\scj\ran(U)\}$. Define a spectral action of $\mathscr G^a$ on $A$ by $\alpha_U:D_{U^{-1}}\to D_U$, where $\alpha_U(f)(\ran(\gamma))=f(\dom(\gamma))$ and $\alpha_U(f)$ vanishes outside $\ran(U)$. We then have the following.

\begin{Prop}\label{ccgpd}
With the above conditions, suppose that $\grp$ is Hausdorff. Then $C_c(\grp)$ is isomorphic as a ring to $A\rtimes \mathscr G^a$.
\end{Prop}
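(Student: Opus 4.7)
The plan is to construct an explicit ring isomorphism $\widetilde\Phi\colon A\rtimes\grp^a\to C_c(\grp)$. First, I would define $\Phi\colon\mathcal{L}\to C_c(\grp)$ on the generators $f\delta_U$ by
\[
\Phi(f\delta_U)(\gamma)=\begin{cases} f(\ran(\gamma)), & \gamma\in U,\\ 0, & \gamma\notin U,\end{cases}
\]
and extend additively. Since $\ran|_U$ is a homeomorphism of $U$ onto $\ran(U)$, the image $\Phi(f\delta_U)$ is continuous on the compact open bisection $U$ and identically zero outside, so it lies in $C_c(\grp)$.

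Next, I would verify that $\Phi$ vanishes on $\mathcal{N}$ and is multiplicative. For the relations, it suffices (by the additive generation of $\mathcal{N}$) to check $\Phi(a\delta_U)=\Phi(a\delta_V)$ when $U\subseteq V$ and $a\in D_U$: for $\gamma\in V\setminus U$, injectivity of $\ran|_V$ identifies $\gamma$ as the unique element of $V$ with range $\ran(\gamma)$, so $\ran(\gamma)\notin\ran(U)\supseteq\supp(a)$ and thus $a(\ran(\gamma))=0$. Multiplicativity reduces to a pointwise computation using that for $\gamma\in UV$ the factorization $\gamma=\beta\rho$ with $\beta\in U$, $\rho\in V$ is unique; both $(\Phi(f\delta_U)\ast\Phi(g\delta_V))(\gamma)$ and $\Phi(\alpha_U(\alpha_{U^{-1}}(f)g)\delta_{UV})(\gamma)$ simplify to $f(\ran(\gamma))g(\dom(\beta))$. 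Thus $\Phi$ descends to a ring homomorphism $\widetilde\Phi$. Surjectivity is immediate from the definition of $C_c(\grp)$: any generator $h$ supported on a compact open bisection $U$ with $h|_U$ continuous equals $\Phi\bigl((h\circ(\ran|_U)^{-1})\delta_U\bigr)$, where the preimage is extended by zero off the compact open set $\ran(U)$ (clopen in $\grp^{(0)}$ because $\grp^{(0)}$ is Hausdorff).

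The main work is injectivity, and the key tool is Hausdorffness of $\grp$. Given $\xi=\sum_{i=1}^n f_i\delta_{U_i}$ with $\widetilde\Phi(\xi)=0$, I would first disjointify the bisections: each $U_i$ is compact and hence closed in the Hausdorff space $\grp$, so arbitrary finite Boolean combinations of the $U_i$ are again compact open bisections, and the standard refinement procedure produces pairwise disjoint compact open bisections $V_1,\dots,V_m$ with each $U_i$ a disjoint union of certain $V_k$. Since $\ran(V_k)$ is clopen in $\grp^{(0)}$, the truncations $f_i\chi_{\ran(V_k)}$ are continuous and lie in $D_{V_k}$, and the defining relations of $\mathcal{N}$ give
\[
\xi\equiv\sum_{k=1}^m g_k\delta_{V_k}\pmod{\mathcal{N}},\qquad\text{where}\qquad g_k=\sum_{i\colon V_k\subseteq U_i}f_i\chi_{\ran(V_k)}\in D_{V_k}.
\]
Because the $V_k$ are pairwise disjoint, evaluating $\widetilde\Phi$ of this refined expression at any $\gamma\in V_k$ picks out only the single term $g_k(\ran(\gamma))$, so the vanishing hypothesis forces $g_k|_{\ran(V_k)}=0$; since $g_k$ is supported on $\ran(V_k)$, this yields $g_k=0$ and hence $\xi\in\mathcal{N}$. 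The principal obstacle is precisely this disjointification step: Hausdorffness of $\grp$ is what makes compact open bisections closed under set difference, and without it the clean reduction to disjoint $V_k$ breaks down, which is exactly why the non-Hausdorff case forces the sheaf-theoretic detour used in the subsequent Theorem~\ref{thm:groupoid.ring.sheaf}.
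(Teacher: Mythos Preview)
Your proof is correct and constructs the same isomorphism as the paper, but the two arguments are organized differently. The paper packages the verification that the map is a well-defined ring homomorphism into the covariant-system framework of~\cite[Theorem~3.5]{BenDan}: it sets $\theta\colon A\hookrightarrow C_c(\grp)$ and $\varphi(U)=1_U$, checks these form a covariant system, and obtains $\pi(a\delta_U)=\theta(a)\ast\varphi(U)$ for free; it then proves bijectivity by writing down an explicit inverse $\psi(f)=\sum f_{U_i}\ast 1_{U_i^{-1}}\delta_{U_i}$ using a decomposition of $f$ over pairwise disjoint compact open bisections (which exists precisely because $\grp$ is Hausdorff). You instead verify the relations and multiplicativity by hand, and in place of an inverse you prove injectivity by disjointifying the bisections $U_i$ appearing in an element of the kernel---again leaning on Hausdorffness to make Boolean combinations of compact open bisections compact open. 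Your route is more self-contained (no external universal property), while the paper's route is shorter once the covariant-system machinery is in place; the essential use of Hausdorffness is the same disjointification idea in both.
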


\begin{proof}
We first build a covariant system for $(\mathscr G^a,A,\alpha)$. The map $\theta:A\to C_c(\grp)$ is the usual inclusion of étale groupoid algebras. The map $\varphi:\mathscr G^a\to C_c(\grp)$ is given by $\varphi(U)=1_U$. A straightforward computation using the convolution product shows that $(C_c(\grp),\theta,\varphi)$ is a covariant system for $(\mathscr G^a,A,\alpha)$. By \cite[Theorem 3.5]{BenDan}, there is a ring homomorphism $\pi:A\rtimes \mathscr G^a\to C_c(\grp)$ such that $\pi(a\delta_U)=\theta(a)*\varphi(U)$ for all $U\in \mathscr G^a$ and $a\in D_{U}$.

To prove that $\pi$ is an isomorphism, it suffices to prove that $\pi$ admits an inverse. Notice that any element $f\in C_c(\grp)$ can be written as
\[f=\sum_{i=1}^n f_{U_i},\]
where $U_1,\ldots,U_n$ are pairwise disjoint compact-open bisections such that, for each $i=1,\ldots,n$, $\supp(f_{U_i})\scj U_i$ and the restriction $f_{U_i}$ to $U_i$ is continuous using that $\grp$ is Hausdorff. We want to build a map $\psi:C_c(\grp)\to A\rtimes \mathscr G^a$. For this, given $f\in C_c(\grp)$ and a decomposition as above, we set
\[\psi(f)=\sum_{i=1}^n f_{U_i}*1_{U_i\inv}\delta_{U_i}.\]
We have to prove that $\psi$ is well-defined. The general case follows from the particular case that $\supp(f)\scj U$, where $U$ is a compact-open bisection with $U=\bigcup_{i=1}^n U_i$. In this case, by the definition of $A\rtimes \mathscr G^a$, we have that
\[\sum_{i=1}^n f_{U_i}*1_{U_i\inv}\delta_{U_i}=\sum_{i=1}^n f_{U_i}*1_{U\inv}\delta_{U}=f*1_{U\inv}\delta_{U}.\]
We now prove that $\psi=\pi\inv$. First, given $f\in C_c(\grp)$ and a decomposition as above
\[\pi(\psi(f))=\sum_{i=1}^n\theta(f_{U_i}*1_{U_i\inv})*\varphi(\delta_{U_i})=\sum_{i=1}^n f_{U_i}*1_{U_i\inv}*1_{U_i}=f.\]
Observing that $\psi$ is a group homomorphism, to show that $\psi$ is a left-inverse for $\pi$, it is enough to consider an element $a\delta_U$, where $a\in D_{U}$. In this case,
\[\psi(\pi(a\delta_U))=\psi(a*1_U)=a*1_U*1_{U\inv}\delta_U=a\delta_U.\]
Hence, $\psi=\pi^{-1}$, concluding the proof.
\end{proof}

\subsection{The general case}

With the previous notation, let us build a $\grp$-sheaf $\OO$ as follows. We let $A=C_c(\grp^{(0)})$ and $\OO=A\times\grp^{(0)}/\sim$, where the equivalence relation is given by $(a,x)\sim(a',x')$ if $x=x'$ and there exists an open neighborhood $U$ of $x$ such that $a|_U=a'|_U$. The topology on $\OO$ has as basis the sets
\[D(a,U)=\{[a,x]:x\in U\},\]
where $a\in A$ and $U$ is a compact-open set of $\grp^{(0)}$. The map $p:\OO\to\grp^{(0)}$ is defined as $p([a,x])=x$, which is a local homeomorphism by the definition of the topology on $\OO$. The ring structure on each stalk is the natural one, namely $[a,x]+[b,x]:=[a+b,x]$ and $[a,x][b,x]=[ab,x]$ for $a,b\in A$ and $x\in\grp^{(0)}$. Notice that a net $\{[a_{\lambda},x_\lambda]\}_{\lambda\in\Lambda}$ converges to $[a,x]$ in $\OO$ if, and only if, for every open neighborhood $U$ of $x$, there exists $\lambda_0$ such that $x_\lambda\in U$ and $[a_\lambda,x_\lambda]=[a,x_\lambda]$ for all $\lambda \geq \lambda_0$. With this, it is straightforward to check that conditions (SR1) and (SR2) as in Section \ref{s:sheaf.coeff} are satisfied. The continuity of the unit section is immediate from the definition of the topology on $\OO$.

The map $\alpha:\grp\times_{\dom,p}\OO\to\OO$ is given by
\[\alpha_\gamma([a,\dom(\gamma)])=[a\circ\dom\circ(\ran|_U)\inv,\ran(\gamma)],\]
where $U$ is a compact bisection containing $\gamma$. Since $U$ is a bisection, the above definition does not depend on $a$. Also, since the intersection of open bisections is again an open bisection, the definition does not depend on $U$. To prove that $\alpha$ is continuous, let $\{(\gamma_\lambda,[a_\lambda,\dom(\gamma_\lambda)])\}_{\lambda\in\Lambda}$ be a net converging to $(\gamma,[a,\dom(\gamma)])$ in $\grp\times_{\dom,p}\OO$. Fix $U$ a compact-open bisection containing $\gamma$, and let $V$ be a compact-open subset of $\grp^{(0)}$ containing $\ran(\gamma)$ and such that $V\scj\ran(U)$. Using that $\ran$ is continuous, and the description of convergence in $\OO$ given above, we find $\lambda_0$ such that for all $\lambda\geq\lambda_0$, we have that $\gamma_\lambda\in U$, $[a_\lambda,\dom(\gamma_\lambda)]=[a,\dom(\gamma_\lambda)]$ and $\ran(\gamma_\lambda)\in V$. Then, for $\lambda\geq\lambda_0$, we have that
\begin{align*}
    \alpha_{\gamma_{\lambda}}([a_\lambda,\dom(\gamma_\lambda)])&=\alpha_{\gamma_{\lambda}}([a,\dom(\gamma_\lambda)])\\
    &=[a\circ\dom\circ(\ran|U)^{-1},\ran(\gamma_\lambda)]\in D(a\circ\dom\circ(\ran|U)^{-1},V).
\end{align*}
Varying $V$ as above, we obtain a neighborhood basis of $[a\circ\dom\circ(\ran|U)^{-1},\ran(\gamma)]$, from where we conclude that $\alpha$ is continuous.
Conditions (S1), (S2), and (SR4) of Section \ref{s:sheaf.coeff} are easy to check. We prove (S3). Let $\beta,\gamma\in\grp$ be such that $\dom(\beta)=\ran(\gamma)$ and let $[a,\dom(\gamma)]\in\OO$. Also, let $U$ and $V$ be open bisections containing $\beta$ and $\gamma$, respectively. We may assume, without loss of generality, that $\dom(U)=\ran(V)$. Then,
\begin{align*}
\alpha_{\beta\gamma}([a,\dom(\gamma)]) & = [a\circ \dom\circ (\ran|_{UV})\inv,\ran(\beta\gamma)] \\
 & = [a\circ \dom\circ (\ran|_{V})\inv\circ\dom\circ (\ran|_{U})\inv,\ran(\beta)] \\
 & = \alpha_{\beta}([a\circ \dom\circ (\ran|_{V})\inv,\dom(\beta)])\\
 & = \alpha_{\beta}(\alpha_{\gamma}([a,\dom(\gamma)]).
\end{align*}

\begin{Thm}\label{thm:groupoid.ring.sheaf}
Let $\grp$ be an ample groupoid and $\OO$ the $\grp$-sheaf constructed above. Then $\secring$ and $C_c(\grp)$ are isomorphic as rings.
\end{Thm}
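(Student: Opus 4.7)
The plan is to construct an explicit ring isomorphism $\Phi\colon \Gamma_c(\grp,\OO)\to C_c(\grp)$ by evaluating germs. Each stalk $\OO_x$ consists of germs at $x$ of functions in $A=C_c(\grp^{(0)})$, so there is a natural ring homomorphism $\mathrm{ev}_x\colon \OO_x\to\cn$ sending $[a,x]\mapsto a(x)$; the induced total-space map $\mathrm{ev}\colon \OO\to\cn$ is continuous, since a convergent net $[a_\lambda,x_\lambda]\to[a,x]$ is eventually of the form $[a,x_\lambda]$ with $x_\lambda\to x$. Set $\Phi(f)(\gamma):=\mathrm{ev}_{\ran(\gamma)}(f(\gamma))$; on a generator $s\chi_U$ this is a function supported on the compact open bisection $U$ and continuous there, so lands in $C_c(\grp)$, and $\Phi$ is additive pointwise.

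For multiplicativity, the crucial identity is $\mathrm{ev}_{\ran(\beta)}\circ\alpha_\beta=\mathrm{ev}_{\dom(\beta)}$ on $\OO_{\dom(\beta)}$: if $e=[b,\dom(\beta)]$ and $W$ is a bisection containing $\beta$, then $\alpha_\beta(e)=[b\circ\dom\circ(\ran|_W)\inv,\ran(\beta)]$, whose value at $\ran(\beta)$ is $b(\dom(\beta))=\mathrm{ev}(e)$. Substituting this into \eqref{eq:define.conv} immediately gives $\Phi(f\ast g)=\Phi(f)\ast\Phi(g)$ on generators, hence on all of $\Gamma_c(\grp,\OO)$.

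The key sheaf-theoretic step is that, for each open $W\subseteq\grp^{(0)}$, the evaluation map induces a bijection between continuous sections $s\colon W\to\OO$ and continuous functions $g\colon W\to\cn$. Existence of a lift of $g$ uses that $\grp^{(0)}$ has a basis of compact (hence clopen) sets: about each $y_0\in W$ one picks a compact open $V\subseteq W$, lets $a_V\in A$ be the extension of $g|_V$ by zero, and sets $s|_V(y):=[a_V,y]$; different choices of $V$ produce germs that agree on overlaps. Uniqueness holds because a continuous section locally takes the form $y\mapsto[a,y]$ with $a$ determined on an open set by its pointwise values. Surjectivity of $\Phi$ is then immediate: each generator $h$ of $C_c(\grp)$ supported on a compact open bisection $U$ is the image of the associated $s_h\chi_U$.

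The hardest step will be injectivity. In the Hausdorff case, compact open bisections are clopen and any finite collection admits a refinement into a disjoint family of compact open bisections; then $f=\sum_i s_i\chi_{U_i}$ with disjoint $U_i$, and $\Phi(f)=0$ immediately gives $\mathrm{ev}\circ s_i=0$, whence $s_i=0$ by the uniqueness above. For general $\grp$ this disjointification is unavailable, which is the main obstacle. The plan is to argue pointwise by contradiction: if $f(\gamma_0)\neq 0$, setting $I=\{i:\gamma_0\in U_i\}$ and taking local representatives $s_i=[a_i,\cdot]$ near $y_0=\ran(\gamma_0)$ gives $f(\gamma_0)=[a,y_0]$ with $a=\sum_{i\in I}a_i$, a nonzero germ. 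On the open bisection $W=\bigcap_{i\in I}U_i$, the value $\Phi(f)(\gamma_y)$ at the unique $\gamma_y\in W$ with $\ran(\gamma_y)=y$ equals $a(y)$ plus contributions from the $U_j$ with $j\notin I$ that meet $W$; exploiting compactness of each $\ran(U_j)$ in the Hausdorff space $\grp^{(0)}$, the goal is to locate $y$ close to $y_0$ at which $a(y)\neq 0$ and no such extra contributions appear, yielding a contradiction with $\Phi(f)=0$.
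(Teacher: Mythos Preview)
Your construction of $\Phi$ via stalk-wise evaluation, the verification of multiplicativity through the identity $\mathrm{ev}_{\ran(\beta)}\circ\alpha_\beta=\mathrm{ev}_{\dom(\beta)}$, and the surjectivity argument are all correct and agree with the paper's approach. The paper obtains bijectivity by writing an inverse $\Psi$ on generators (sending $u$ supported on $U$ to the section $y\mapsto[u\circ(\ran|_U)\inv,y]$ times $\chi_U$) rather than arguing injectivity directly; but since $\Phi\circ\Psi=\mathrm{id}$ on generators, the two routes stand or fall together: $\Psi$ extends to a well-defined additive map precisely when $\Phi$ is injective. Your Hausdorff argument via disjointification is fine.

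The gap is in the non-Hausdorff injectivity step. Your plan to avoid the ``extra contributions'' from $U_j$ with $j\notin I$ by using that $\ran(U_j)$ is compact (hence closed) in $\grp^{(0)}$ cannot work: nothing prevents $y_0\in\ran(U_j)$ via a \emph{different} arrow $\gamma'\in U_j$ with $\ran(\gamma')=y_0$, so you cannot separate $y_0$ from $\ran(U_j)$; and the more relevant set $\ran(U_j\cap W)$, while open in $\ran(W)$ and not containing $y_0$, need not be closed and may accumulate at $y_0$. Here is a concrete obstruction. Let $X=\{0\}\cup\{1/n:n\ge 1\}$ and let $\grp$ be the ample group bundle with trivial fibre at $x\neq 0$ and fibre $\mathbb Z/2=\{e_0,\gamma_0\}$ at $0$, where basic neighbourhoods of $\gamma_0$ have the form $\{\gamma_0\}\cup\{e_x:x\in U\setminus\{0\}\}$. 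Take $U_1=\grp^{(0)}$, $U_2=\{\gamma_0\}\cup\{e_x:x\neq 0\}$, any $a\in C(X)$ with $a(0)=0$ but $a\not\equiv 0$ near $0$, and set $f=s_1\chi_{U_1}+s_2\chi_{U_2}$ with $s_1(x)=[a,x]$ and $s_2(x)=[-a,x]$. Then $\Phi(f)\equiv 0$, yet $f(e_0)=[a,0]\neq 0$ as a germ. Thus for this $\grp$ the evaluation map $\Phi$ is genuinely not injective (equivalently, the paper's $\Psi$ fails to be well defined on sums of generators: here $u_1+u_2=0$ in $C_c(\grp)$ but $\psi_{u_1}+\psi_{u_2}=f\neq 0$). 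No pointwise refinement of your argument will close this gap.
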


\begin{proof}
First, let $f:\grp\to\OO$ be such that $p\circ f=\ran$ and there exists $U$ compact-open bisection of $\grp$ such that $f|_U$ is continuous and $f|_{\grp\setminus U}=0$. Define $\varphi_f:\grp\to\cn$ by $\varphi_f(\gamma)=0$, if $\gamma\notin U$, and $\varphi_f(\gamma)=a(\ran(\gamma))$, if $\gamma\in U$ and $f(\gamma)=[a,\ran(\gamma)]$. Since $f|_U$ is continuous, if $\gamma\in U$ and $f(\gamma)=[a,\ran(\gamma)]$, then there exists an open set $V$ such that $\gamma\in V\scj U$ and $f(\eta)=[a,\ran(\eta)]$ for all $\eta\in V$. This implies that $\varphi_f|_U$ is also continuous, and therefore $\varphi_f\in C_c(\grp)$. Due to the ring structure on each stalk, we can define a group homomorphism $\Phi:\secring\to C_c(\grp)$ such that $\Phi(f)=\varphi_f$, for $f$ a generator of $\secring$ as above.

On the other hand, given $u\in C_c(\grp)$ such that there exists $U$ compact-open bisection such that $u|_U$ is continuous and $\supp(u)\scj U$, we can define $a_u\in C_c(\grp^{(0)})$ by $a_u(x)=0$ if $x\notin\ran(U)$ and $a_u(x)=u((\ran|_U)^{-1}(x))$ if $x\in\ran(U)$. Then the function $\psi_u:\grp\to\OO$ given by $\psi_u(\gamma)=0$ if $\gamma\notin U$, and $\psi_u(\gamma)=[a_u,\ran(\gamma)]$ is such that $\psi_u|_U$ is continuous and therefore $\psi_u\in\secring$. Again, we can build a group homomorphism $\Psi:C_c(\grp)\to\secring$. Straightforward computations show that $\Psi=\Phi^{-1}$.

It remains to prove that the above group isomorphism is also a ring isomorphism. We prove that $\Psi$ is multiplicative. Since the maps are already group homomorphisms, due to distributivity, it is enough to consider $u,v\in C_c(G)$ such that there exist $U,V$ compact-open bisections such that $\supp(u)\scj U$, $\supp(v)\scj V$ and $u|_U$ and $v|_V$ are continuous. Notice that in this case $\supp(u*v)\scj UV$, $u*v|_{UV}$ is continuous and for $\gamma=\beta\rho$, where $\beta\in U$ and $\rho\in V$, we have that $(u*v)(\gamma)=u(\beta)v(\rho)$. Keeping the notation of the above paragraphs, we have that
\[\psi_{u*v}(\gamma)=\begin{cases}
[a_{u*v},\ran(\beta)], & \text{if }\gamma=\beta\rho\text{ with }\beta\in U,\ \gamma\in V\\
0, &\text{otherwise.}
\end{cases}\]
On the other hand,
\[(\psi_u *\psi_v)(\gamma)=\begin{cases}
[a_{u},\ran(\beta)]\alpha_{\beta}([a_{v},\ran(\rho)]), & \text{if }\gamma=\beta\rho\text{ with }\beta\in U,\ \gamma\in V\\
0, &\text{otherwise.}
\end{cases}
\]
Now, if $\gamma=\beta\rho$, with $\beta\in U$ and $\rho\in V$, then
\begin{align*}
    [a_{u},\ran(\beta)]\alpha_{\beta}([a_{v},\ran(\rho)]) &= [a_{u},\ran(\beta)][a_{v}\circ\dom\circ(\ran|_U)\inv ,\ran(\beta)] \\
    & = [a_{u}(a_{v}\circ\dom\circ(\ran|_U)\inv),\ran(\beta)].
\end{align*}
Notice that if $x\notin\ran(UV)$, then
\[a_{u*v}(x)=0=a_u(x)(a_{v}\circ\dom\circ(\ran|_U)\inv)(x).\]
And if $x=\ran(\gamma)$, where $\gamma=\beta\rho$, with $\beta\in U$ and $\rho\in V$, then
\begin{align*}
    a_{u*v}(x) &=(u*v)(\gamma) \\
    &=u(\beta)v(\rho)\\
    &=a_u(x)a_v(\ran(\rho))\\
    &=a_u(x)a_v(\dom(\beta))\\
    &=a_u(x)(a_{v}\circ\dom\circ(\ran|_U)\inv)(x).\\
\end{align*}
It follows that $\psi_{u*v}=\psi_u*\psi_v$ and hence the multiplicativity of $\Psi$ is proved.
\end{proof}


We finish by characterizing simplicity of $C_c(\grp)$

\begin{Lemma}
\label{intiso=intker}
Let $\grp$ be an ample groupoid and $\OO$ the $\grp$-sheaf constructed above. Then, $\mathrm{Int}(\ker(\OO))=\grp^{(0)}$ if and only if $\grp$ is effective.
\end{Lemma}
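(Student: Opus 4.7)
The plan is to prove the stronger identity $\mathrm{Int}(\ker\OO)=\mathrm{Int}(\mathrm{Iso}(\grp))$, from which the lemma follows at once: by the definition of effectiveness ($\grp^{(0)}=\mathrm{Int}(\mathrm{Iso}(\grp))$), the condition $\mathrm{Int}(\ker\OO)=\grp^{(0)}$ becomes $\mathrm{Int}(\mathrm{Iso}(\grp))=\grp^{(0)}$, which is exactly effectiveness of $\grp$.

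One direction of the identity is immediate: since $\ker\OO\subseteq\mathrm{Iso}(\grp)$ holds by the very definition of $\ker\OO$, taking interiors yields $\mathrm{Int}(\ker\OO)\subseteq\mathrm{Int}(\mathrm{Iso}(\grp))$.

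For the reverse inclusion it suffices to show $\mathrm{Int}(\mathrm{Iso}(\grp))\subseteq\ker\OO$, since the left-hand side is open and will then automatically sit inside $\mathrm{Int}(\ker\OO)$. So fix $\gamma\in\mathrm{Int}(\mathrm{Iso}(\grp))$, set $x=\dom(\gamma)$, and choose a compact-open bisection $U$ with $\gamma\in U\subseteq\mathrm{Iso}(\grp)$. For every $\eta\in U$ we have $\dom(\eta)=\ran(\eta)$, so the homeomorphism $\phi:=\dom\circ(\ran|_U)^{-1}\colon\ran(U)\to\dom(U)$ satisfies $\phi(y)=y$ for all $y\in\ran(U)$; in particular, $\phi$ is the identity on the open neighborhood $\ran(U)$ of $x$. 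By the formula for the $\grp$-sheaf structure on $\OO$ written down just before the lemma, the action of $\alpha_\gamma$ on a germ $[a,x]\in\OO_x$ is $\alpha_\gamma([a,x])=[a\circ\phi,x]$. Since $a\circ\phi$ and $a$ coincide on the open neighborhood $\ran(U)$ of $x$, this germ equals $[a,x]$. Hence $\alpha_\gamma$ acts as the identity on $\OO_x$, i.e.\ $\gamma\in\ker\OO$, which completes the argument.

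There is no real obstacle here; everything reduces to the observation that $\gamma\in\mathrm{Int}(\mathrm{Iso}(\grp))$ forces $\phi$ to be pointwise the identity on a neighborhood of $x$, which trivially makes the pullback action on germs the identity. The opposite containment $\ker\OO\subseteq\mathrm{Iso}(\grp)$ is already built into the definition of $\ker\OO$, so no finer description of $\ker\OO$ (for example, no hard ``separating germs'' argument using functions in $C_c(\grp^{(0)})$) is needed for this lemma.
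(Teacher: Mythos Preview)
Your proof is correct and follows essentially the same approach as the paper: both hinge on the observation that if $U$ is a compact open bisection contained in $\mathrm{Iso}(\grp)$, then $\dom\circ(\ran|_U)^{-1}$ is the identity on $\ran(U)$, which forces $\alpha_\gamma$ to act trivially on germs. The only cosmetic difference is that you package the argument as the slightly stronger identity $\mathrm{Int}(\ker\OO)=\mathrm{Int}(\mathrm{Iso}(\grp))$ and then read off the lemma, whereas the paper argues the two implications of the equivalence directly; the underlying computation is identical.
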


\begin{proof}
That $\grp$ effective implies $\mathrm{Int}(\ker(\OO))=\grp^{(0)}$ holds for any $\grp$-sheaf of rings. For the other implication, let $\gamma\in\mathrm{Int}(\mathrm{Iso}(\grp))$ be such that there exists a compact-open bisection $U$ with $\gamma\in U\scj\mathrm{Iso}(\grp)$. Observe that, in this case, $\dom\circ(\ran|_U)^{-1}$ is the identity map on $\ran(U)$.
Let $a\in C_c(\grp)$ and $\rho\in U$. Then,
\[\alpha_\rho([a,\dom(\rho)])=[a\circ\dom\circ(\ran|_U)\inv,\ran(\rho)]=[a,\dom(\rho)],\]
so that $\rho\in\ker(\OO)$. Hence $U\scj\ker(\OO)$ and $\gamma\in\mathrm{Int}(\ker(\OO))=\grp^{(0)}$. It follows that $\grp$ is effective.
\end{proof}

\begin{Thm}\label{thm:C_c(G) simple}
Let $\grp$ be an ample Hausdorff groupoid. Then, $C_c(\grp)$ is simple if and only if $\grp$ is minimal and effective.
\end{Thm}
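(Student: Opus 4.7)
The plan is to combine Proposition~\ref{ccgpd}, which gives $C_c(\grp) \cong A \rtimes \grp^a$ with $A = C_c(\grp^{(0)})$, with the characterization of simplicity of skew inverse semigroup rings from \cite[Theorem~3.7]{BGOR} that underlies the equivalence \eqref{t.item:simple1} $\Leftrightarrow$ \eqref{t.item:simple2} of Theorem~\ref{simplelife}. This reduces the statement to proving two equivalences: \emph{$A$ is $\grp^a$-simple if and only if $\grp$ is minimal}, and \emph{$A$ is maximal commutative in $C_c(\grp)$ if and only if $\grp$ is effective}. Lemma~\ref{intiso=intker} then identifies effectiveness with the condition $\mathrm{Int}(\ker\OO) = \grp^{(0)}$ for the sheaf $\OO$ constructed before Theorem~\ref{thm:groupoid.ring.sheaf}.

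For the first equivalence, I would identify $A$ with $\Gamma_c(\grp^{(0)}, \OO)$ via Theorem~\ref{thm:groupoid.ring.sheaf} and invoke Proposition~\ref{c:minimal}. Its hypothesis---that every nonzero global section of $\OO$ is invertible at some stalk---is easy to verify: any nonzero $a \in C_c(\grp^{(0)})$ takes a nonzero value at some $x_0$, and choosing a compact-open neighborhood $V$ of $x_0$ on which $a$ does not vanish and setting $b = 1/a$ on $V$, $b = 0$ elsewhere, produces $b \in A$ with $ab = \chi_V$, so the germ $[a, x_0]$ is a unit of the stalk $\OO_{x_0}$.

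For the second equivalence, the forward implication is routine: if $\grp$ is not effective, pick a compact-open bisection $U$ with $U \subseteq \mathrm{Int}(\mathrm{Iso}(\grp))$ and $U \not\subseteq \grp^{(0)}$; then $\chi_U \in C_c(\grp) \setminus A$ commutes with every $g \in A$, because $\dom(\gamma) = \ran(\gamma)$ for $\gamma \in U$. The hard part will be the reverse implication. The sheaf $\OO$ from Theorem~\ref{thm:groupoid.ring.sheaf} has stalks that are rings of germs of continuous complex-valued functions, which are generally neither fields nor integral domains, and its zero section need not be closed; consequently neither Corollary~\ref{cor:masa.Hausdorff} nor Theorem~\ref{simplelife}\eqref{t.item:simple4} can be applied directly. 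I would therefore bypass the sheaf framework and argue in $C_c(\grp)$ itself: if $\grp$ is effective and $f \in C_c(\grp)$ centralizes $A$, then the identity $f(\gamma)\, g(\dom(\gamma)) = g(\ran(\gamma))\, f(\gamma)$ for all $g \in A$, combined with the Hausdorffness of $\grp^{(0)}$ (so that $C_c(\grp^{(0)})$ separates points), forces $\dom(\gamma) = \ran(\gamma)$ whenever $f(\gamma) \neq 0$; thus $\{\gamma : f(\gamma) \neq 0\} \subseteq \mathrm{Iso}(\grp)$. Since $\grp$ is Hausdorff, $f$ is continuous on $\grp$, making this set open and hence contained in $\mathrm{Int}(\mathrm{Iso}(\grp)) = \grp^{(0)}$ by effectiveness, which gives $f \in A$ and completes the proof.
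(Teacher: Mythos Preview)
Your proof is correct and follows essentially the same strategy as the paper: reduce via \cite[Theorem~3.7]{BGOR} to the two equivalences ($\grp^a$-simplicity $\Leftrightarrow$ minimality, and maximal commutativity of $C_c(\grp^{(0)})$ $\Leftrightarrow$ effectiveness), handle the first via Corollary~\ref{c:minimal} after checking that nonzero sections are invertible at some point, and handle the second by producing $\chi_U$ with $U\subseteq\mathrm{Int}(\mathrm{Iso}(\grp))$, $U\not\subseteq\grp^{(0)}$, for one direction and analyzing the centralizer for the other. The paper argues the same way, citing Proposition~\ref{p:neweffective} with Lemma~\ref{intiso=intker} for ``maximal commutative $\Rightarrow$ effective'' and Corollary~\ref{c:effective.case.max.cen} for the converse.

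The one place you genuinely diverge is the implication ``$\grp$ effective $\Rightarrow$ $C_c(\grp^{(0)})$ maximal commutative.'' The paper invokes Corollary~\ref{c:effective.case.max.cen}, whose hypothesis includes that the zero section of $\mathcal O$ is closed---and, as you correctly point out, for the germ sheaf of Section~\ref{complex} this can fail (the stalks are local rings of germs, not fields, and the interior of the zero set of a continuous function need not be clopen). Your direct argument inside $C_c(\grp)$, using that $f$ is continuous as a $\mathbb C$-valued function so that $\{\gamma: f(\gamma)\neq 0\}$ is automatically open, is the clean way around this: it is exactly the proof of Proposition~\ref{p:centralizer.supp.iso} carried out in the concrete setting where $\{0\}\subseteq\mathbb C$ is trivially closed, and it makes the step fully rigorous.
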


\begin{proof}
Note that the equivalence of (1) and (2) in Theorem \ref{simplelife} holds for any $\grp$-sheaf because it comes from \cite[Theorem 3.7]{BGOR}. Let $\OO$ be the sheaf constructed above and consider the isomorphism of Theorem \ref{thm:groupoid.ring.sheaf}. Observe that $\OO$ is a sheaf of commutative rings that satisfies the hypothesis that every non zero section is invertible at some point. By Corollary \ref{c:minimal}, we have that $C_c(\grp^{(0)})$ is $\grp^a$-simple if and only if $\grp$ is minimal. If $\grp$ is effective, then $C_c(\grp^{(0)})$ is maximal commutative by Corollary \ref{c:effective.case.max.cen}. On the other hand if $C_c(\grp^{(0)})$ is maximal commutative, then $\grp$ is effective by Proposition \ref{p:neweffective} and Lemma \ref{intiso=intker}. Hence, we get that $C_c(\grp)$ is simple if and only if $\grp$ is minimal and effective.
\end{proof}

\begin{Example}[Graph algebras]
Let $E$ be a graph. By means of the graph groupoid $\grp_E$, it was shown in \cite{CFST} that the Leavitt path algebra $L_{\cn}(E)$ is dense in the C*-algebra $C^*(E)$. Observe that the algebra $C_c(\grp_E)$ is between $L_{\cn}(E)$ and $C^*(E)$. Since $\grp_E$ is Hausdorff, using Theorem \ref{thm:C_c(G) simple} and the results of \cite{BCFS}, we obtain that the following are equivalent
\begin{enumerate}
    \item $L_{\cn}(E)$ is simple;
    \item $C^*(E)$ is simple;
    \item $C_c(\grp_E)$ is simple.
\end{enumerate}
\end{Example}




\subsection{The algebra of continuous functions, with compact support, on the transformation groupoid.}

In this subsection, $X$ is always a Hausdorff, locally compact, totally disconnected topological space, and $C_c(X)$ denotes the algebra of all continuous, compactly supported, $R$-valued functions on $X$ (where $R$ stands for either the real numbers or the complex numbers), with point-wise addition and multiplication. 

It is proved in \cite{BG} that a partial skew group ring of the form $\mathcal L_C(X) \rtimes G$ (where $\mathcal L_C(X)$ stands for the locally constant functions on $X$) can be seen as the Steinberg algebra associated with the transformation groupoid $G \ltimes X$. Next, we argue that an analogous result holds when we replace $\mathcal L_C(X)$ with $C_c(X)$ and the Steinberg algebra with $C_c(G \ltimes X)$. In fact, the outline of the proof is the same as the one of the proof given in \cite[Theorem~3.2]{BG}, and so we will refrain from presenting a whole proof, and instead will only point to the main differences between the two settings.

For $f \in C_c(X)$ we define the support of $f$ by $$\supp(f)= \overline{\{x \in X \ :  \ f(x)\neq 0\}}.$$ Notice that when dealing with a function $f$ in $\mathcal L_C(X)$, the set $\{x \in X \ :  \ f(x)\neq 0\}$ is already closed and so it is not necessary to take closure in the definition of support. This is one of the main differences between the $C_c(X)$ and $\mathcal L_C(X)$ cases. 

Now, let $\theta=(\{X_g\}_{g \in G}, \{\theta_g\}_{g \in G})$ be a partial action of a discrete group $G$ on $X$, such that $X_g$ is clopen for every $g$ in $G.$ Such action induces an action in the algebra level, as done in \cite{Beuter} and \cite{Doke}: For each $g$ in $G,$ consider the ideal $D_g:= \{ f \in C_c(X) \, :  \, f \ \mbox{ vanishes on } X\setminus X_g \}$ in $C_c(X)$, and define $\alpha_g: D_{g^{-1}} \rightarrow D_g$ by setting $\alpha_g(f)=f\circ \theta_{g^{-1}},$ for all $ f \in D_{g^{-1}}.$ Then the collection 
\begin{equation}
\alpha := ( \{D_g\}_{g \in G}, \{\alpha_g\}_{g \in G} )
\end{equation}
is an algebraic partial action of $G$ on $C_c(X)$.

Associated with the above partial action we consider the partial skew group ring $C_c(X)\rtimes G$. We also associate to the action $\theta$ an \'{e}tale groupoid, denoted by $G \ltimes X$, and known as the \emph{transformation grupoid} (see \cite{Abadie}). Let
 $$G \ltimes X:= \{(t,x) \, : \, t \in G \,\, \mbox{and} \,\, x \in X_t \}.$$
The inverse of $(t, x) \in G \ltimes X$ is 
$$(t, x)^{-1}=(t^{-1}, \theta_{t^{-1}}(x)).$$ So, the range and source maps $r: G \ltimes X \rightarrow  (G \ltimes X)^{(0)}$ and $s : G \ltimes X \rightarrow (G \ltimes X)^{(0)}$ are given by $r(t,x) =(1,x)$, and $s(t,x)=(1, \theta_{t^{-1}}(x))$ (where $1$ denotes the group unit). 

From the above, we have that $(s,y), (t,x) \in G \ltimes X$ then $(s,y), (t,x) $ is a composable pair if, and only if, $\theta_{s^{-1}}(y)=x.$ In this case, we have
$$(s,y)(t,x)=(st, y).$$
Finally, we equip $G \ltimes X$ with the topology inherited from the product topology on $G \times X.$ 

We now state the key result in this subsection.

\begin{Thm}\label{theorisomortransfgrou}
Let $\theta=(\{X_g\}_{g \in G}, \{\theta_g\}_{g \in G})$ be a partial action of a discrete group $G$ over a locally compact, Hausdorff, totally disconnected topological space $X$, such that each $X_g$ is clopen for all $g$. Let $(\{D_g\}_{g \in G}, \{\alpha_g\}_{g\in G})$ be the corresponding partial action (as defined above) and $G \ltimes X$ be the transformation groupoid associate with $\theta.$ Then, $C_c(X)\rtimes G$ and $C_c(G \ltimes X)$ are isomorphic as $R$-algebras. 
\end{Thm}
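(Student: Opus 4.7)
The plan is to construct mutually inverse ring homomorphisms $\Phi\colon C_c(X)\rtimes G\to C_c(G\ltimes X)$ and $\Psi\colon C_c(G\ltimes X)\to C_c(X)\rtimes G$, following the blueprint of~\cite[Theorem~3.2]{BG} but adapting it from the $\mathcal L_c(X)$ setting to the $C_c(X)$ setting. The key geometric observation is that, because $G$ is discrete and each $X_g$ is clopen, the subset $\{g\}\times X_g\subseteq G\ltimes X$ is a clopen bisection homeomorphic to $X_g$ via the second projection; consequently, extending a compactly supported continuous function on $X_g$ by zero to $X$ remains continuous, giving a natural identification of $D_g$ with the ``$g$-slice'' of $C_c(G\ltimes X)$.

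On generators $f\delta_g$ with $f\in D_g$, define $\Phi(f\delta_g)$ to be $f(x)$ at $(g,x)$ (where necessarily $x\in X_g$) and zero on all other slices; this produces an element of $C_c(G\ltimes X)$ with support contained in $\{g\}\times\supp(f)$ and continuous restriction to the clopen bisection $\{g\}\times X_g$. Multiplicativity reduces to comparing $\Phi(f\delta_g)*\Phi(h\delta_{g'})$ with $\Phi\bigl(\alpha_g(\alpha_{g\inv}(f)h)\delta_{gg'}\bigr)$: in the convolution only composable pairs above $(g,g')$ contribute, giving the value $f(x)h(\theta_{g\inv}(x))$ at $(gg',x)$, which matches after unfolding the definition of $\alpha_g$.

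For the inverse, given $u\in C_c(G\ltimes X)$, set $f_g(x)=u(g,x)$ for $x\in X_g$ and $f_g(x)=0$ otherwise, and define $\Psi(u)=\sum_{g\in G}f_g\delta_g$. The sum is finite because $\supp(u)$ is compact and the clopen slices $\{g\}\times X_g$ are pairwise disjoint, so only finitely many intersect $\supp(u)$; each $f_g$ lies in $D_g$ since continuity of $u$ on $\{g\}\times X_g\cong X_g$ together with the clopenness of $X_g$ in $X$ makes the extension by zero continuous on $X$, while $\supp(f_g)$ is the projection of the compact set $\supp(u)\cap(\{g\}\times X_g)$. The identities $\Phi\circ\Psi=\mathrm{id}$ and $\Psi\circ\Phi=\mathrm{id}$ are then immediate from the definitions.

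The main technical obstacle, absent in the $\mathcal L_c(X)$ case, is that the support of a function in $C_c(X)$ must be taken as the closure of its cozero set, which blocks certain manipulations that go through freely for locally constant functions. The clopen hypothesis on each $X_g$, combined with the discreteness of $G$, is exactly what rules out boundary pathologies: it makes $\{g\}\times X_g$ a clopen bisection of $G\ltimes X$, so that continuous extension-by-zero is well defined in both directions, and so that any $u\in C_c(G\ltimes X)$ genuinely decomposes as a finite sum indexed by group elements.
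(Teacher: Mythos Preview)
Your proposal is correct and follows essentially the same approach as the paper: the paper's own proof simply says to mimic \cite[Theorem~3.2]{BG} while being careful that the cozero set of a continuous function need not be closed, and you have spelled out precisely that argument, using the clopenness of each $X_g$ (and hence of each slice $\{g\}\times X_g$) to handle the support issue. Your write-up is in fact more detailed than what the paper provides.
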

\begin{proof}
The proof is analogous to the proof of \cite[Theorem~3.2]{BG}. One should only take into account that for a general continuous function the set of points where the function does not vanish does not need be closed. 
\end{proof}

\bibliographystyle{abbrv}
\bibliography{ref}

\end{document}